\newcommand\N{\mathbb{N}}
\newcommand\R{\mathbb{R}}
\newcommand\C{\mathbb{C}}
\newcommand\Z{\mathbb{Z}}
\newcommand\eps{\varepsilon}
\DeclareMathOperator{\curl}{\mathrm{curl}}
\newcommand{\dvec}[3]{\left(\begin{matrix}#1\cr #2\cr #3\end{matrix}\right)}
\newcommand\calX{\mathcal{X}}
\newcommand\calY{\mathcal{Y}}
\newcommand\calZ{\mathcal{Z}}
\newcommand{\supp}{\operatorname{supp}}
\newcommand{\del}{\partial}
\newcommand{\one}[1]{{\bf{1}}_{#1}}
\newcommand{\id}{\mathrm{id}}
\newcommand{\loc}{\mathrm{loc}}
\newcommand\fhi{\varphi}
\renewcommand{\div}{\operatorname{div}}
\newcommand{\delh}{\partial_\textrm{hor}\Omega}
\newcommand{\delhU}{\partial_\textrm{hor}U}
\newtheorem{theorem}{Theorem}[section]
\newtheorem{definition}[theorem]{Definition} 
\newtheorem{lemma}[theorem]{Lemma}
\newtheorem{proposition}[theorem]{Proposition}
\newtheorem{remark}[theorem]{Remark}
\def\XXint#1#2#3{{\setbox0=\hbox{$#1{#2#3}{\int}$ }
		\vcenter{\hbox{$#2#3$ }}\kern-.6\wd0}}
\numberwithin{equation}{section}
\begin{document}
	\bibliographystyle{abbrv}
	
	\pagestyle{myheadings} \markboth{On polarization interface
 conditions for time-harmonic Maxwell's
 equations}{B.~Delourme, B.~Schweizer and D.~Wiedemann}
	
	\thispagestyle{empty}

	\begin{center}
		~\vspace{-0.9cm}
		~\vskip3mm {\Large\bf On polarization interface conditions for \\[3mm]
			time-harmonic Maxwell's equations}\\[6mm]
		{\large B.~Delourme\footnotemark[1], B.~Schweizer\footnotemark[2],
			D.~Wiedemann\footnotemark[2]}\\[4mm]
		
	\, 
		
	\end{center}
	
	\footnotetext[1]{Université Sorbonne Paris Nord, Département de
		Mathématiques, Institut Galilée, 99, avenue Jean-Baptiste Clément,
		F-93430 Villetaneuse, delourme$@$math.univ-paris13.fr}
	
	\footnotetext[2]{Technische Universität Dortmund, Fakult\"at f\"ur
		Mathematik, Vogelspothsweg 87, D-44227 Dortmund,
		ben.schweizer$@$tu-dortmund.de, david.wiedemann$@$tu-dortmund.de}
	
	\begin{center}
		\vskip1mm
		\begin{minipage}[c]{0.87\textwidth}
			{\bf Abstract:} We consider the time-harmonic Maxwell's equations
			with a polarization interface condition. The interface condition
			demands that one component of the electric field vanishes at the
			interface and that the corresponding component of the magnetic
			field has no jump across the interface. These conditions have been
			derived in the literature as a homogenization limit for thin wire inclusion. 
			We analyze the limit equations and provide an existence result and a
			Fredholm-alternative.
			 \vskip4mm
			{\bf Keywords:} Maxwell’s equations, Interface condition, Polarization, Fredholm alternative 
			
			{\bf MSC:} 78A02, 35Q61, 78A25 
		\end{minipage}\\[5mm]
	\end{center}

	\section{Introduction}
	Polarization filters for electromagnetic waves are interesting for
	many technical applications such as, e.g., LED monitors. The filters
	have the property that waves with a certain polarization can pass the
	filter, waves with the oppposite polarization cannot pass the filter,
	see Figure~\ref {fig:polarization}. Polarization filters are also a
	very interesting mathematical object. Maxwell's equations describe the two
	electromagnetic fields $E$ and $H$ on the two sides of
	the filter, the filter is modelled by some interface conditions. These
	conditions demand continuity for certain components of the fields and
	homogeneous Dirichlet conditions for other components. The
	transmission conditions have been derived recently with mathematical
	rigor. In this article, we are concerned with
	the analysis of the limit model and clarify the well-posedness. We
	treat the time-dependent system; well-posedness of the time-dependent
	system could be concluded with a Fourier transform in time.
	
	\begin{SCfigure}[][ht]
		\begin{minipage}{0.6\linewidth}
			\vspace{-5cm}
			\includegraphics[width=0.45\linewidth]{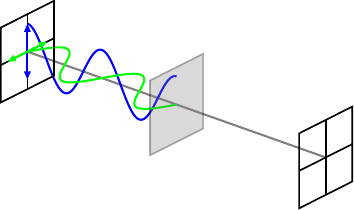}
			\hspace{0.1cm}
			\includegraphics[width=0.45\linewidth]{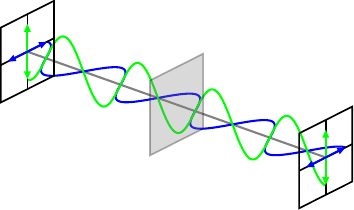}
		\end{minipage}
		\hfill
		\begin{minipage}[ht]{0.38\linewidth}
			\captionsetup{width=0.96\linewidth} {\caption{\small Polarization:
					Electromagnetic waves (electric field in green and magnetic
					field in blue) propagating from the left-hand side and interacting
					with a polarization interface.
					\vspace{1cm} \label{fig:polarization}}}
		\end{minipage}
	\end{SCfigure}
	
	\vspace{-1.2cm} The system of interest is
	\eqref{eq:Maxwell:strong}, it has been derived with homogenization
	techniques in \cite {DelourmeHewett2020} and \cite
	{Schweizer-Wiedemann-Pol-2025}. Let us describe these results briefly,
	using the notation of the latter work. Maxwell's equations are
	studied in a complex domain $\Omega_\eta$, accordingly, the solutions
	$E^\eta$ and $H^\eta$ depend on the parameter $\eta>0$. The domain is
	obtained by removing a small scale structure from an underlying domain
	$\Omega$, an open cuboid in $\R^3$. The cuboid contains a flat
	interface $\Gamma$. A microstructure $\Sigma_\eta$ is defined (in the
	most relevant of three examples) by rods with cross-sectional diameter
	of order $\eta$, rods that are parallel and distributed along $\Gamma$
	with periodicity $\eta$.
	\begin{wrapfigure}{r}{0.4\textwidth} 
		\centering
		\includegraphics[width=0.27\textwidth]{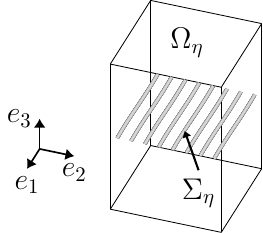}
		\caption{Domains $\Omega_\eta$ and $\Sigma_\eta$.}
		\label{fig:OmegaEta}
	\end{wrapfigure}
	The complex domain is obtained by removing the union of rods from the
	underlying domain: $\Omega_\eta = \Omega\setminus \Sigma_\eta$. The
	geometry is visualized in Figure~\ref{fig:OmegaEta}. Maxwell's
	equations in $\Omega_\eta$ with boundary conditions that model
	perfectly conducting material in the rods provides solutions $E^\eta$,
	$H^\eta$. In \cite {DelourmeHewett2020} and \cite
	{Schweizer-Wiedemann-Pol-2025}, the effective limit system
	\eqref{eq:Maxwell:strong} is derived. In \cite {DelourmeHewett2020},
	asymptotic expansions are used. In \cite
	{Schweizer-Wiedemann-Pol-2025}, for rods parallel to the
	$x_1$-direction, it is shown with oscillating test-functions that,
	whenever there is a weak $L^2$-convergence
	$(E^\eta,H^\eta) \to (E,H)$, the limit $(E,H)$ satisfies system
	\eqref{eq:Maxwell:strong}.
	
	The articles \cite {DelourmeHewett2020} and \cite
	{Schweizer-Wiedemann-Pol-2025} are concerned with the derivation of an
	effective system, not with the analysis of that system. To the best of
	our knowledge, an important question is open: Can we have results on
	existence and uniqueness for the limit system
	\eqref{eq:Maxwell:strong} (in dependence of $\omega$). The article at
	hand answers this question. Let us stress the importance of this step:
	Without this analysis, we cannot be sure that the system
	\eqref{eq:Maxwell:strong} is complete in the sense that all equations
	are found. Additionally, we cannot be sure if
	\eqref{eq:Maxwell:strong} is over-determined (such that it cannot be solved in
	general). With the analysis of the paper at hand we clarify that the
	limit system is well-posed.
	
	On the technical side, we mention that the derivation in \cite
	{Schweizer-Wiedemann-Pol-2025} assumes that the solution sequence
	$(E^\eta,H^\eta)$ is bounded in $L^2$. Since this is not verified,
	\cite {Schweizer-Wiedemann-Pol-2025} does not provide the existence of
	solutions to \eqref{eq:Maxwell:strong}. 
	
	The existence and uniqueness results (or, more precisely, the Fredholm
	alternative) for system~\eqref{eq:Maxwell:strong} turns out to be
	quite tricky. There is a natural approach that we sketch in
	Section~\ref {ssec.variational}: One defines variants of
	$H(\curl)$-spaces that incorporate some of the interface
	conditions. With a compactness result for these $H(\curl)$-spaces one
	derives a Fredholm alternative; this is the classical approach, see
	\cite{Monk2003a}. We did not succeed in showing the desired
	compactness. 
	
	Since we are lacking a compactness result, we must use another
	approach. Our idea is to use an equivalent formulation of the Maxwell system
	with a family of Helmholtz-type systems. The analysis of the latter is quite 
	simple so that we can determine the spectrum of the Maxwell system.

	\subsection{Maxwell equations with a polarization interface condition}
	
	To simplify the setting, we consider cuboids $\Omega\subset \R^3$,
	aligned with the coordinate axes, and an aligned interface. We consider length parameters
	$l_1,l_2, l_3^+, l_3^- >0$ and $l_3 = l_3^+ + l_3^-$ to define the
	intervals
	\begin{align}\label{eq:def:I1-I2-I3}
		I_1 \coloneqq (0,l_1)\,, \ I_2 \coloneqq (0,l_2)\,, \
		I_3 \coloneqq (-l_3,l_3)\,, \
		I_3^+ \coloneqq (0,l_3^+)\,, \ 
		I_3^- \coloneqq (-l_3^-,0)\,.
	\end{align}
	The domain $\Omega$ and the interface $\Gamma$ are
	\begin{equation}\label{eq:def:Omega}
		\Omega \coloneqq I_1 \times I_2 \times I_3 \,,
		\qquad \Gamma \coloneqq I_1 \times I_2 \times \{0\} \,.
	\end{equation}
	The upper part of the domain is
	$\Omega_+ \coloneqq I_1 \times I_2 \times I_3^+$ and the lower part is
	$\Omega_- \coloneqq I_1 \times I_2 \times I_3^-$.
		For the top and the bottom boundary of $\Omega$ we use the notation
	\begin{equation}\label{eq:def:Omega-top-bot}
		\partial_\text{top}\Omega \coloneqq I_1 \times I_2 \times \{l_3^+\}\,,\quad
		\partial_\text{bot}\Omega \coloneqq I_1 \times I_2 \times \{-l_3^-\}\,,\quad
		\delh \coloneqq \partial_\textrm{top}\Omega \cup \partial_\textrm{bot}\Omega\,.
	\end{equation}
	
	\medskip We treat time-harmonic Maxwell's equations, solutions are
	pairs $(E,H)$ with $E, H:\Omega\to \C^3$. The polarization interface
	is given by $\Gamma$. The reader should think of this interface as
	an effective description of long and thin conductive objects that are elongated in the
	first coordinate direction $e_1$, as visualized in
	Figure~\ref{fig:OmegaEta}. The effective behavior of this geometry is given by the interface conditions \eqref
	{eq:Maxwell:strong:3}--\eqref {eq:Maxwell:strong:4}. For notational convenience, we use 
	periodicity boundary conditions along the lateral
	boundaries. In the following system, the frequency $\omega>0$, the permittivity $\eps>0$, the
	permeability $\mu>0$ and source terms $f_h, f_e$ are given. The	
	system has to be solved for $E$ and $H$. 
	
	\vspace*{5mm}
	\hspace{-0.3cm}\begin{minipage}[h]{0.7\textwidth}
		\begin{subequations}\label{eq:Maxwell:strong}
			\begin{align}\label{eq:Maxwell:strong:1}
				\curl E &= i \omega \mu H + f_h && \text{in } \Omega \,,
				\\\label{eq:Maxwell:strong:2}
				\curl H &= - i \omega \eps E + f_e && \text{in } \Omega \setminus \Gamma \,,
				\\\label{eq:Maxwell:strong:3}
				E_1|_\Gamma & = 0 && \text{on } \Gamma \,,
				\\\label{eq:Maxwell:strong:4}
				\llbracket H_1\rrbracket_\Gamma & = 0 && \text{on } \Gamma \,,
				\\\label{eq:Maxwell:strong:5}
				E \times e_3 &= 0 && \text{on } \partial_\text{top}\Omega \cup\partial_\text{bot}\Omega \,,
				\\\label{eq:Maxwell:strong:6}
				x &\mapsto (E,H)(x) && \text{is }x_1\text{- and }x_2\text{-}\text{periodic} \,.
			\end{align}
		\end{subequations}
	\end{minipage}
	\hspace{-1cm}
	\begin{minipage}[h]{0.37\textwidth}
		\centering
		\vspace{0.cm}
		\captionsetup{width=1\linewidth}
		\includegraphics[width=0.7\textwidth]{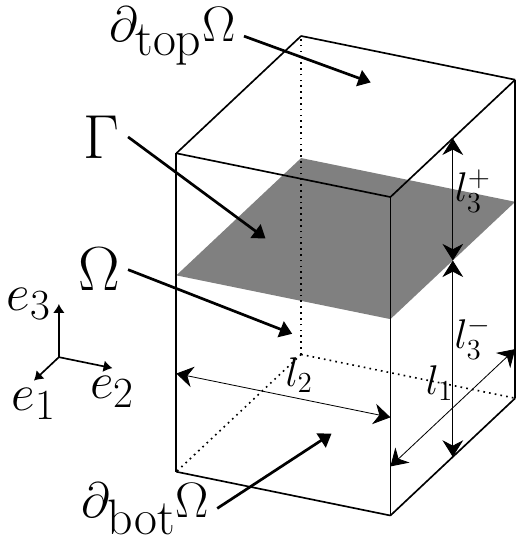}
		\captionof{figure}{Geometry}
		\label{fig:beispiel}
	\end{minipage}
	\smallskip
	
	Equation \eqref{eq:Maxwell:strong:1} is imposed on the entire domain
	$\Omega$; this implies that the tangential components $E_1$ and $E_2$ of $E$ cannot
	jump across $\Gamma$. This implicit interface condition is
	supplemented by the interface conditions \eqref{eq:Maxwell:strong:3}
	and \eqref{eq:Maxwell:strong:4}, which impose that $E_1$ vanishes at
	the interface and that $H_1$ does not jump across the interface. At
	the upper and lower boundaries of $\Omega$,
	\eqref{eq:Maxwell:strong:5} models a perfect conductor. 
	
	The weak solution concept for system \eqref{eq:Maxwell:strong} is made
	precise in Definition~\ref{def:Maxwell:weak}. This definition uses
	function spaces $X \subset H_0(\curl, \Omega)$ and
	$Y\subset H(\curl, \Omega\setminus \Gamma)$, see \eqref{eq:Def:X} and
	\eqref{eq:Def:Y}, respectively. The spaces include boundary
	conditions and interface conditions. Essentially, we will find
	solutions of system \eqref{eq:Maxwell:strong} with $E\in X$ and
	$H\in Y$. Technically, in the definition, the weak solution concept is
	slightly different: it uses $Y$ as a space of test-functions for the
	$E$-equation and $X$ as a space of test-functions for the
	$H$-equation. In the following, when we speak of solutions to
	\eqref{eq:Maxwell:strong}, we always mean weak solutions in the sense
	of Definition~\ref{def:Maxwell:weak}.

	Due to the simple geometry, we can determine the spectrum of the
	Maxwell operator. For lengths $L_1, L_2, L_3 > 0$ we set
	\begin{equation}
		\label{eq:spec-LLL}
		\sigma(L_1, L_2, L_3)
		\coloneqq \left\{ \frac{4 \pi^2}{\eps \mu} \left(\frac{k_1^2}{L_1^2}
		+ \frac{k_2^2}{L_2^2}
		+ \frac{k_3^2}{4L_3^2}\right) \,\middle|\,
		k_1, k_2, k_3\in \N_0 \right\}\,,
	\end{equation}
	with $\N_0 = \{0,1,2,...\}$. These are actually the eigenvalues of the operator
	$-(\eps\mu)^{-1}\Delta$ on the cuboid with side-lengths $L_1, L_2, L_3$
	with periodicity conditions in the first two directions and with a
	Neumann condition in the third direction. The eigenfunctions for this 
	operator $-(\eps\mu)^{-1}\Delta$
	are, when
	$(0,0,0)$ is a corner of the cuboid,
	$u(x_1, x_2, x_3) = \cos(2\pi k_1 x_1/L_1) \cos(2\pi k_2 x_2/L_2)
	\cos(\pi k_3 x_3/L_3)$.
	
	We will see that the spectrum of the Maxwell operator in the geometry
	of \eqref{eq:def:Omega} is given by
	\begin{equation}
		\label{eq:spec-M}
		\sigma_M
		\coloneqq \sigma(l_1, l_2, l_3) \cup \sigma(l_1, l_2, l_3^+)
		\cup \sigma(l_1, l_2, l_3^-)\,.
	\end{equation}
	It is the union of the spectra of $(\eps\mu)^{-1}\Delta$ on the total
	domain, the upper part and the lower part. Our main result determines
	the spectrum of the Maxwell system.
	
	\begin{theorem}[Spectrum of the Maxwell system with a polarization
		interface]
		\label{thm:spectrum} Let $\Omega$ and $\Gamma$ be as in
		\eqref{eq:def:Omega}, we consider the Maxwell system
		\eqref{eq:Maxwell:strong} with parameters $\eps, \mu >0$. The
		spectrum of the Maxwell system \eqref{eq:Maxwell:strong} is
		$\sigma_M$ of \eqref {eq:spec-M} in the following sense: (i) For
		$\omega^2 \notin \sigma_M$, system \eqref{eq:Maxwell:strong} has a
		unique solution for arbitrary $(f_h,f_e) \in L^2(\Omega,
		\C^3)^2$. (ii) For $\omega^2 \in \sigma_M$, system
		\eqref{eq:Maxwell:strong} has a non-trivial solution for
		$(f_h,f_e) = 0$.
	\end{theorem}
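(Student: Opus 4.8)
The plan is to exploit the product structure of the geometry and block-diagonalize the system by a Fourier expansion in the two periodic directions. Writing $E$, $H$, $f_h$, $f_e$ as Fourier series in $\exp(2\pi i(k_1 x_1/l_1 + k_2 x_2/l_2))$, the system \eqref{eq:Maxwell:strong} decouples into a family of one-dimensional problems on the interval $(-l_3^-,l_3^+)$, indexed by $(k_1,k_2)\in\Z^2$; by Parseval, $(E,H)\in L^2$ corresponds to square-summability of the modes, and the interface conditions \eqref{eq:Maxwell:strong:3}--\eqref{eq:Maxwell:strong:4} together with the implicit continuity of the tangential trace of $E$ become transmission conditions at $x_3=0$ for each mode. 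I would first make this reduction precise at the level of the weak formulation of Definition~\ref{def:Maxwell:weak}, so that solving \eqref{eq:Maxwell:strong} is equivalent to solving every one-dimensional problem and controlling the modes in $\ell^2$.

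For a fixed mode with transverse wave numbers $\alpha = 2\pi k_1/l_1$, $\beta = 2\pi k_2/l_2$, the next step is to split the one-dimensional system into the two polarizations adapted to the distinguished direction $e_1$: the transverse-magnetic part $H_1\equiv 0$, governed by the scalar $E_1$, and the transverse-electric part $E_1\equiv 0$, governed by the scalar $H_1$. After eliminating the third components algebraically one checks that these two polarizations are exactly the invariant subspaces of the reduced $4\times 4$ first-order system, so the splitting is complete whenever $\omega^2\eps\mu\neq\alpha^2$; each scalar then satisfies $u'' = -(\omega^2\eps\mu - \alpha^2 - \beta^2)\,u$ on each side of $\Gamma$. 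The boundary and interface conditions are the crux: \eqref{eq:Maxwell:strong:5} forces $E_1=0$ on $\delh$, and via a relation of the form $E_2 = c(\omega)\,\partial_3 H_1$ with $c(\omega)=i\omega\mu/(\omega^2\eps\mu-\alpha^2)$ it forces $\partial_3 H_1=0$ on $\delh$. At the interface, \eqref{eq:Maxwell:strong:3} gives $E_1|_\Gamma=0$, while the admissible jumps of $E_3$ and $H_2$ leave $\partial_3 E_1$ free there, so the $E_1$-problem decouples into two independent Dirichlet problems on $\Omega_+$ and $\Omega_-$; for $H_1$, the imposed condition \eqref{eq:Maxwell:strong:4} gives $\llbracket H_1\rrbracket_\Gamma=0$ and the continuity of the tangential trace $E_2\propto\partial_3 H_1$ gives $\llbracket \partial_3 H_1\rrbracket_\Gamma=0$, so $H_1$ is of class $C^1$ across $\Gamma$ and solves a single Neumann problem on the full interval $(-l_3^-,l_3^+)$.

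Solving these elementary one-dimensional eigenvalue problems yields the three families: the Neumann problem on $(-l_3^-,l_3^+)$ produces $\omega^2\in\sigma(l_1,l_2,l_3)$ (from $\gamma = \pi k_3/l_3$), and the two Dirichlet problems produce $\omega^2\in\sigma(l_1,l_2,l_3^+)\cup\sigma(l_1,l_2,l_3^-)$ (from $\gamma=\pi k_3/l_3^\pm$); the union over all $(k_1,k_2,k_3)$ is exactly $\sigma_M$. This gives statement (ii) directly: for $\omega^2\in\sigma_M$ I pick $(k_1,k_2,k_3)$ and the polarization realizing it and read off the explicit eigenfunction (a product of the transverse Fourier mode with a cosine or sine in $x_3$), which generates a nontrivial solution of the homogeneous system. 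For statement (i), $\omega^2\notin\sigma_M$ means that in every mode the homogeneous one-dimensional problem has only the trivial solution; since each mode is a three-point linear boundary-transmission problem, Fredholm theory (or an explicit Green's function in $x_3$) yields a unique solution of the inhomogeneous problem. It remains to reassemble the Fourier series and check that it defines a genuine weak solution: here one uses that $\sigma_M$ is discrete with eigenvalues growing like $4\pi^2(k_1^2/l_1^2+k_2^2/l_2^2+\dots)$, so that $\mathrm{dist}(\omega^2,\sigma_M)>0$ gives a uniform bound on the one-dimensional solution operators, with decay for high modes, and the series converges in the correct space.

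I expect the main obstacle to be the rigorous passage between the weak formulation and the pointwise one-dimensional transmission problems---in particular, extracting the conditions $E_1|_\Gamma=0$, $\llbracket\partial_3 H_1\rrbracket_\Gamma=0$ and the free jump of $\partial_3 E_1$ from the distributional interface conditions encoded in the spaces $X$ and $Y$, and verifying completeness of the polarization splitting for general $L^2$ data. A secondary technical point is the treatment of the exceptional modes where $\omega^2\eps\mu=\alpha^2$, at which the relation $E_2\propto\partial_3 H_1$ and the block-decoupling degenerate; fortunately this equality forces $\omega^2\in\sigma_M$, so it never occurs in case (i) and only has to be addressed when constructing eigenfunctions in case (ii).
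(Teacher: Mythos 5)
Your strategy is sound and it extracts the correct spectrum, but your route is genuinely different from the paper's. The paper never Fourier-decomposes in $(x_1,x_2)$; instead it reformulates \eqref{eq:Maxwell:strong} as the Helmholtz-type system \eqref{eq:Helmholtz:strong} in physical space --- a Dirichlet--Helmholtz problem for $E_1$ on $\Omega_\pm$, a Neumann--Helmholtz problem for $H_1$ on $\Omega$, and four one-dimensional problems in $x_1$ for the slaved components --- and spends most of its effort proving the equivalence of the two weak formulations (Lemmas \ref{lem:Maxwell-implies-Helmholtz} and \ref{lem:Helmholtz-implies-Maxwell}, supported by the density Lemma \ref{lem:density} and the boundary-regularity lemmas in the appendix). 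Your mode-by-mode reduction is in effect the Fourier transform of that structure: the invariant-subspace splitting of the reduced $4\times4$ system is indeed complete exactly when $\omega^2\eps\mu\neq\alpha^2$ for every mode, i.e.\ $\omega^2\notin\sigma(l_1)$, which reproduces the paper's non-resonance condition, and your Dirichlet-on-halves/Neumann-on-the-whole identification of the $x_3$-problems is exactly the paper's. What your approach buys is a more elementary endgame (explicit one-dimensional transmission problems with resolvent bounds uniform in the mode, and the possibility of treating general $L^2$ data directly at the level of the first-order mode system, whereas the paper must first reduce to $(f_h,f_e)\in Y_0^\perp\times X_0^\perp$ by a Helmholtz decomposition). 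What it costs is that the step you yourself identify as the main obstacle --- passing rigorously from the weak formulation of Definition~\ref{def:Maxwell:weak} to pointwise transmission conditions mode by mode, including the density of separated test-functions in $X$ and $Y$ --- is precisely the content of the paper's technical lemmas, so the difficulty is relocated rather than removed; and if you insist on the second-order scalar reductions you will still need divergence-free data or a very weak mode formulation, since the elimination differentiates the sources in $x_3$.

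One concrete point in case (ii) needs more than a flag: at frequencies $\omega^2=4\pi^2k_1^2/(\eps\mu l_1^2)\in\sigma(l_1)$ whose only representation in \eqref{eq:spec-LLL} has $k_2=k_3=0$, your recipe ``product of a transverse mode with a cosine or sine in $x_3$'' returns the zero field (the same degeneracy affects the explicit formula \eqref{eq:whole-case-fields}). A non-trivial eigenfunction still exists, but it has a different shape, for instance
\begin{equation*}
H=\one{\Omega_+}\,\bigl(0,\ \cos(2\pi k_1 x_1/l_1),\ 0\bigr)\,,\qquad
E=\one{\Omega_+}\,\bigl(0,\ 0,\ \tfrac{2\pi k_1}{i\omega\eps l_1}\sin(2\pi k_1 x_1/l_1)\bigr)\,,
\end{equation*}
constant in $x_3$ and supported in one half-domain; one checks directly that all interface and boundary conditions of \eqref{eq:Maxwell:strong} hold because $E_1$, $E_2$ and $H_1$ vanish identically. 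You should construct such an eigenfunction explicitly in the degenerate case rather than deferring it.
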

	
	Theorem \ref {thm:spectrum} implies the statement of the subsequent
	remark, which is less precise in the description of the spectrum.
	Remark \ref {rem:fredholm} formulates a statement that we expect to be
	valid not only in the cuboid geometry with a flat interface, but also
	for more general domains $\Omega$ and more general interfaces
	$\Gamma$.
	
	\begin{remark}[Spectrum and Fredholm alternative]
		\label{rem:fredholm}
		For parameters $\eps, \mu> 0$ and in the geometry of
		\eqref{eq:def:Omega}, there holds for some discrete set
		$\sigma\subset \R$: For every $0\neq\omega^2\notin \sigma$, there exists
		a unique solution $(E,H) \in X \times Y$ of
		\eqref{eq:Maxwell:strong} for every
		$f_h, f_e \in L^2(\Omega, \C^3)$. For every $0\neq\omega^2\in \sigma$,
		the homogeneous system \eqref{eq:Maxwell:strong} has a non-trivial
		solution.
	\end{remark}
	
	In the above statements, it is no loss of generality to assume that
	the right-hand sides $f_h, f_e \in L^2(\Omega, \C^3)$ are divergence
	free in the distributional sense. Indeed, general $f_h, f_e$ can be
	decomposed with a Helmholtz decomposition to reduce the Maxwell system
	to divergence-free right-hand sides. We sketch the well-known argument
	in Lemma~\ref{lem:Helmholtz:Decomposition:Data} and
	Remark~\ref{rem:Helmholtz:Decomposition:Data:general}. 
	
	\subsection{On a variational formulation}
	\label{ssec.variational}
	
	The Maxwell system \eqref{eq:Maxwell:strong} possesses a natural
	variational formulation. Let us describe this formulation even though
	our proof on existence and uniqueness follows a different route and
	uses Helmholtz equations.
	
	\smallskip We start with the ``coercive Maxwell system'', which is
	formally obtained by setting $\omega = i$. For notational
	convenience, we assume in this overview $f_h = 0$. Using $E$ as the
	only unknown, the coercive Maxwell system reads
	$\curl(\curl\, E) + \eps\mu\, E = -\mu f_e$. We use the function
	space $X$, defined in \eqref{eq:Def:X}, which consists of functions in
	$H(\curl, \Omega)$ which are periodic in the first and second
	direction, have vanishing tangential components at $\delh$ and a
	vanishing first component at $\Gamma$. We define the sesquilinear
	form $B \colon X \times X \to \C$,
	\begin{equation}
		\label{eq:var-form-B}
		B(E,\phi) \coloneqq \int_\Omega \left\{ \curl E\cdot \curl \bar\phi
		+ \eps\mu\,E\cdot\bar\phi\right\}\,.
	\end{equation}
	The form $B$ is coercive on $X$ since $B(E,E)$ controls the
	$L^2$-norm of $E$ and the $L^2$-norm of $\curl E$. By the Lax--Milgram lemma, for
	every $f_e\in L^2(\Omega)$, we can solve uniquely the problem
	$B(E,\phi) = -\mu \int f_e\cdot\bar\phi$\ $\forall \phi\in X$, which
	implies $\curl(\curl\, E) + \eps\mu\, E = -\mu f_e$ in
	$\Omega\setminus \Gamma$. By setting $H \coloneqq -\mu^{-1}\curl E$, due to
	our choice of function spaces, we see that all equations of
	\eqref{eq:Maxwell:strong} are satisfied with $\omega = i$ and $f_h=0$
	--- the only open point being \eqref{eq:Maxwell:strong:4} concerning the jump of $H_1$ across $\Gamma$.
	
	The fact that we have solved $B(E,\phi) = -\mu\int f_e\cdot\bar\phi$
	implies that
	\begin{equation*}
		- \int_\Omega H\cdot \curl \bar\phi
		+ \int_\Omega \eps\,E\cdot\bar\phi = - \int_\Omega f_e\cdot \bar\phi
	\end{equation*}
	holds for all $\phi\in X$. This encodes not only
	\eqref{eq:Maxwell:strong:2}, but also, since $\curl\,H|_{\Omega_\pm}$
	is an $L^2(\Omega_\pm)$-function, that the interface integral over the
	jump vanishes,
	\begin{equation*}
		0 = \int_\Gamma (e_3\times \llbracket H\rrbracket_\Gamma)\cdot \bar\phi
		= \int_\Gamma \llbracket H_1\rrbracket_\Gamma\ \bar\phi_2
	\end{equation*}
	for all $\phi\in X$. Since $\phi_2$ can take arbitrary values on the
	inner interface $\Gamma$, the function $H_1$ cannot have a jump across
	$\Gamma$ (in the sense of traces). With this argument, also
	\eqref{eq:Maxwell:strong:4} is verified. We conclude that, for the ``coercive Maxwell system'', 
	the weak form can be solved uniquely and it encodes all equations. 
	
	\smallskip The next goal is to transfer this existence and uniqueness
	result for $\omega = i$ to arbitrary $\omega\in \R$, or, more
	precisely, to all $\omega\in \R\setminus \{0\}$ except for the countable set of
	eigenvalues. The underlying idea is to perform the following Steps:
	(i) Repeat the arguments on some smaller space $X_{\div} \subset X$,
	which is defined by introducing an additional condition on the
	divergence of the functions $E$. (ii) Show that the space $X_{\div}$
	is a compact subset of $L^2(\Omega)$. (iii) The difference between the
	original Maxwell system and the coercive system consists in the
	multiplication operator $(\omega^2 + 1)\,\id$. This operator is a bounded
	operator on $L^2(\Omega)$ and, hence, defines a compact perturbation
	of the system by the compactness of (ii). The conclusion of (i)--(iii) 
	is that the Maxwell system defines a Fredholm operator.
	
	Without the interface $\Gamma$, this program works. The space
	$X_{\div}\subset X$ is defined by imposing $\nabla\cdot E = 0$ in
	the distributional sense. The compactness is classical, see, e.g.,
	\cite{Monk2003a}. We obtain that the Maxwell system is Fredholm.
	
	With the interface $\Gamma$, we did not succeed to perform a
 proof along these lines. The main obstacle regards Steps (i)
 and (ii): We did not find a suitable divergence condition that
 allows to derive the compactness of $X_{\div}$ and to repeat
 the remaining arguments.
	
	\subsection{Literature}
	
	This contribution is related to compactness of
 $H(\curl)$-spaces, for the classical theory see
 \cite{Monk2003a} and \cite{Kirsch-Buch-2015}, extensions to
 mixed boundary conditions are made in
 \cite{Bauer-Pauly-S-2016}, other methods are used
 \cite {Ciarlet-2005}, a very general treatment on the
 related Helmholtz decompositions is given in \cite
 {ABDG-1998}. We also refer to the references in these works
 and to a short summary given in \cite
 {Schweizer-Wiedemann-Max-Ex-2024}. A Fredholm alternative for
 Maxwell's equations in a quasiperiodic setting is the basis of
 the analysis in \cite{Kirsch-Schweizer-ARMA2025}. We emphasize
 that all of these works do not treat polarization interfaces.
	
	The homogenization of bulk heterogeneities is very classical, often
	treated with two-scale convergence, see \cite{MR1185639}. Interesting
	limit equations can occur when resonances are exploited, see
	\cite{MR2576911-Bouch} and \cite {Bouchitte-Schweizer-Max-2010}.
 In
	\cite{Ohlberger-Schweizer-Urban-Verfuehrt-2020}, a closely related
	setting is investigated: Maxwell's equations in a domain with thin
	structures, possibly wires; in contrast to the present article, the
	inclusions are filling a subdomain and a bulk homogenization is
	performed. The work contains numerical aspects, more in this direction
	can be found in \cite{MR4096127} and the references therein.
	
	Small inclusions along a hypersurface require different methods, but
	they are also a classical topic, see \cite{MR1493040}. A detailed
	study of inclusions along a hypersurface is given for Helmholtz
	equations in \cite{DelourmeHaddarJoly-2012} and
	\cite{Schweizer-Neumann-2020}. These results concern the critical
	scaling and treat $\eta^0$ and $\eta^1$ approximations of
	solutions. In this sense, those results are much more detailed than
	those of \cite {DelourmeHewett2020} and
	\cite{Schweizer-Wiedemann-Pol-2025}, which are the homogenization
	counterpart of the work at hand.

	\subsection{Reformulation as a Helmholtz-type system}
	We consider generalized di\-ver\-gence-free right-hand sides $f_e$ and $f_h$ in the sense that $f_e \in X_0^\perp$ and $f_h \in Y_0^\perp$ for $X_0$ and $Y_0$ given by \eqref{eq:def:X0-Y0}. These properties imply, written in a strong form,
	\begin{align}\label{eq:strongdivFree}
		\div f_e = 0 \ \text{ in } \Omega\setminus \Gamma\,, \qquad \div f_h = 0 \ \text{ in } \Omega \,, \qquad (f_h)_3|_{\delh} =0 \,.
	\end{align}
	Our results are based on the observation that for such right-hand sides the Maxwell system
	\eqref{eq:Maxwell:strong} is equivalent to the Helmholtz-type system \eqref {eq:Helmholtz:strong}
	below. We only have to demand a non-resonance condition on the
	frequency, namely $\omega^2 \notin \sigma(l_1) \subset \sigma_M$ for
	\begin{equation*}
		\sigma(L_1) \coloneqq \left\{ \frac{4 \pi^2}{\eps \mu} \frac{k_1^2}{L_1^2}
		 \,\middle|\,	k_1\in \N_0 \right\}\,.
	\end{equation*}
	
	The six equations of \eqref {eq:Helmholtz:strong} provide a strong
	formulation of the Helmholtz-type system. We emphasize that this
	strong formulation cannot be used for right-hand sides $f_h$ and $f_e$
	of class $L^2(\Omega)$; this regards, in particular, the boundary
	condition \eqref {eq:Helmholz:BCH}, since, in general, $f_e$ does not
	have a trace. It is therefore necessary to understand system \eqref
	{eq:Helmholtz:strong} in the weak sense, provided in Definition
	\ref{def:Helmholtz:weak}. The strong formulation is given here only for
	convenience of the reader.
	\begin{subequations}\label{eq:Helmholtz:strong}
		\begin{align}\label{eq:Helmholtz:strong:E1}
			\Delta E_1 + \omega^2 \eps \mu E_1
			&= - i \omega \mu (f_e)_1 - (\curl f_h)_1\,,
			\\
			\label{eq:Helmholtz:strong:H1}
			\Delta H_1 + \omega^2 \eps \mu H_1
			&= i \omega \eps (f_h)_1 - (\curl f_e)_1 \,,
			\\
			\label{eq:Helmholtz:strong:E2}
			\partial_1^2 E_2 + \omega^2 \eps \mu E_2
			&= \partial_2 \partial_1 E_1 + i \omega \mu \partial_3 H_1 + \partial_1(f_h)_3 - i \omega \mu (f_e)_2\,,
			\\
			\label{eq:Helmholtz:strong:H2}
			\partial_1^2 H_2 + \omega^2 \eps \mu H_2
			&= \partial_2 \partial_1 H_1 - i \omega \eps \partial_3 E_1 - \partial_1 (f_e)_3 + i \omega \eps (f_h)_2 \,,
			\\
			\label{eq:Helmholtz:strong:E3}
			\partial_1^2 E_3 + \omega^2 \eps \mu E_3
			&= \partial_3 \partial_1 E_1 - i \omega \mu \partial_2 H_1 - \partial_1 (f_h)_2 - i \omega \mu (f_e)_3\,,
			\\
			\label{eq:Helmholtz:strong:H3}
			\partial_1^2 H_3 + \omega^2 \eps \mu H_3
			&= \partial_3 \partial_1 H_1 + i \omega \eps \partial_2 E_1 - \partial_1 (f_e)_2 + i \omega \eps (f_h)_3\,.
		\end{align}
	\end{subequations}
	The equations are complemented with the boundary and interface
	conditions
	\begin{subequations}\label{eq:Helmholz:BC}
		\begin{align}\label{eq:Helmholz:BCE}
			E_1 &= 0
			&&\text{on } \Gamma\cup \partial_\text{top}\Omega \cup \partial_\text{bot}\Omega\,,
			\\\label{eq:Helmholz:BCH}
			\partial_3 H_1
			& = (f_e)_2
			&&\text{on } \partial_\text{top}\Omega \cup \partial_\text{bot}\Omega\,,
			\\\label{eq:Helmholz:BCPer}
			x &\mapsto (E,H)(x)
			&&\text{is } x_1\text{- and }x_2\text{-}\text{periodic} \,.
		\end{align}
	\end{subequations}
	We remark that \eqref {eq:Helmholz:BCH} is the natural boundary
	condition when \eqref {eq:Helmholtz:strong:H1} is written in a weak
	form.

	System \eqref {eq:Helmholtz:strong}, complemented with boundary
	conditions \eqref{eq:Helmholz:BC}, for $\omega^2\notin \sigma_M$, can
	be solved in three steps: In the first step, $E_1$ is found as the
	solution to the Helmholtz problem \eqref{eq:Helmholtz:strong:E1} with
	the boundary conditions \eqref{eq:Helmholz:BCE} and
	\eqref{eq:Helmholz:BCPer}; one has to solve two uncoupled Helmholtz
	problems, posed in $\Omega_+$ and $\Omega_-$. In the second step,
	$H_1$ is found as the solution to the Helmholtz problem
	\eqref{eq:Helmholtz:strong:H1} with the boundary conditions
	\eqref{eq:Helmholz:BCH} and \eqref{eq:Helmholz:BCPer}; one has to
	solve one Helmholtz problems in $\Omega$. In the third step, the four
	unknowns $E_2$, $E_3$, $H_2$, $H_3$ are found by solving, for almost
	every $(x_2, x_3)\in I_2\times I_3$, one dimensional Helmholtz problem
	on $I_1$ with periodicity boundary conditions.
	
	In the above procedure, steps one and two can be interchanged. Once
	$E_1$ and $H_1$ are found, the last four equation can be solved
	independently of each other.
	
	Based on the three step procedure, it is easy to show that the
	Helmholtz-type system \eqref {eq:Helmholtz:strong} can be solved
	uniquely for every $\omega^2\notin \sigma_M$. The existence statement
	of Theorem~\ref {thm:spectrum} then follows from the fact that every
	solution to the Helmholtz-type system is a solution to the Maxwell
	system if $\omega^2 \notin \sigma(l_1) \subset \sigma_M$. This latter
	fact is derived in Lemma~\ref {lem:Helmholtz-implies-Maxwell}.

	The uniqueness statement of part (i) of Theorem~\ref
	{thm:spectrum} is a consequence of the fact that the two systems of
	equations are equivalent (we show this even for all $\omega^2 \notin \sigma(l_1)$). 
	The uniqueness statement for solutions of the Helmholtz system,
	which holds for $\omega^2 \notin \sigma_M$, yields a uniqueness
	statement for the solutions of the Maxwell system under the same frequency assumption. 
	
	In all of these arguments,
	one has to be careful in the choice of weak solution
	concepts. Furthermore, one must reduce the problem to right-hand
	sides $f_h, f_e \in L^2(\Omega, \C^3)$ that are divergence-free; more
	precisely, we demand $f_h \in Y_0^\perp$ and $f_e \in X_0^\perp$, for
	the definitions of the spaces see \eqref{eq:def:X0-Y0} below. As
	announced, a Helmholtz decomposition argument provides that it is no
	restriction to consider only divergence-free right-hand sides, see
	Lemma \ref{lem:Helmholtz:Decomposition:Data}.
	
	The following proposition gives a precise description of the fact
	that, loosely speaking, systems \eqref{eq:Maxwell:strong} and
	\eqref{eq:Helmholtz:strong} are equivalent.
	
	\begin{proposition}[Equivalence of the Maxwell and the Helmholtz-type
		system]
		\label{prop:Equi:Maxwell-Helmholtz}
		Let the geometry and the coefficients be as above. Then, the Maxwell
		system \eqref{eq:Maxwell:strong} and the Helmholtz-type system are
		equivalent in the following sense:
		
		\smallskip
		(i) For $\omega^2 \notin \sigma(l_1)$ and a right-hand side given by
		$(f_e, f_h) \in X_0^\perp \times Y_0^\perp$, $(E,H)$ is a weak
		solution to the Maxwell system \eqref{eq:Maxwell:strong} in the
		sense of Definition~\ref{def:Maxwell:weak} if, and only if, it is a
		weak solution to the Helmholtz-type system
		\eqref{eq:Helmholtz:strong} in the sense of
		Definition~\ref{def:Helmholtz:weak}.
		
		\smallskip
		(ii) For $\omega^2 \in \sigma(l_1)$ and $f_e=f_h=0$, there exists a
		non-trivial weak solution $(E_M,H_M)$ to the Maxwell system
		\eqref{eq:Maxwell:strong} and a non-trivial weak solution
		$(E_H,H_H)$ to the Helmholtz-type system
		\eqref{eq:Helmholtz:strong}. Moreover, every solution $(E_M,H_M)$ to
		the Maxwell system is a solution to the Helmholtz-type system. It is
		not true that every solution $(E_H,H_H)$ to the Helmholtz-type
		system is a solution to the Maxwell system.
	\end{proposition}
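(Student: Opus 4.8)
The plan is to establish the two implications of the equivalence separately, and to read off part~(ii) from the same computation. \emph{For the direction Maxwell $\Rightarrow$ Helmholtz} (which I expect to hold for \emph{all} $\omega$), I would start from a weak solution $(E,H)$ of \eqref{eq:Maxwell:strong} and first extract the divergence-free properties $\div E = 0$ in $\Omega\setminus\Gamma$ and $\div H = 0$ in $\Omega$ by testing the curl equations \eqref{eq:Maxwell:strong:1}--\eqref{eq:Maxwell:strong:2} against gradients and using $f_e\in X_0^\perp$, $f_h\in Y_0^\perp$, i.e.\ \eqref{eq:strongdivFree}. The six equations \eqref{eq:Helmholtz:strong} then follow by recombining the componentwise curl relations: taking $\partial_1$ of one relation and substituting its partner eliminates $E_2,E_3,H_2,H_3$ and yields the four one-dimensional equations \eqref{eq:Helmholtz:strong:E2}--\eqref{eq:Helmholtz:strong:H3}, while $\curl\curl = \nabla\div-\Delta$ applied to the first components, together with $\div E=\div H=0$, gives \eqref{eq:Helmholtz:strong:E1}--\eqref{eq:Helmholtz:strong:H1}. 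The boundary data transfer directly: \eqref{eq:Maxwell:strong:3} and \eqref{eq:Maxwell:strong:5} give \eqref{eq:Helmholz:BCE}, and \eqref{eq:Helmholz:BCH} is exactly the natural boundary term of \eqref{eq:Helmholtz:strong:H1}. Since no frequency restriction enters, this simultaneously proves the two ``every Maxwell solution is a Helmholtz solution'' assertions in (i) and (ii).

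\emph{For the direction Helmholtz $\Rightarrow$ Maxwell} I would use the non-resonance assumption $\omega^2\notin\sigma(l_1)$. Given a weak solution of \eqref{eq:Helmholtz:strong}, introduce the residuals $R_h \coloneqq \curl E - i\omega\mu H - f_h$ and $R_e \coloneqq \curl H + i\omega\eps E - f_e$ (the latter in $\Omega\setminus\Gamma$), with components $r_j$ and $s_j$; the goal is $R_h=R_e=0$. The key observation is that, after the same substitutions as above, equations \eqref{eq:Helmholtz:strong:E2}--\eqref{eq:Helmholtz:strong:H3} are equivalent to the first-order relations $\partial_1 r_3 = i\omega\mu\, s_2$, $\partial_1 s_2 = i\omega\eps\, r_3$ and $\partial_1 r_2 = -i\omega\mu\, s_3$, $\partial_1 s_3 = -i\omega\eps\, r_2$. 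Each pair decouples into $(\partial_1^2+\omega^2\eps\mu)\,r_j=0$ and $(\partial_1^2+\omega^2\eps\mu)\,s_j=0$ for $j\in\{2,3\}$; with $x_1$-periodicity and the invertibility of $\partial_1^2+\omega^2\eps\mu$ on periodic functions (i.e.\ $\omega^2\notin\sigma(l_1)$) this forces $r_2=r_3=s_2=s_3=0$. For the first components, \eqref{eq:Helmholtz:strong:E1}--\eqref{eq:Helmholtz:strong:H1} are equivalent, via $\curl\curl=\nabla\div-\Delta$, to $(\curl R_h)_1 = \partial_1\div E - i\omega\mu\, s_1$ and $(\curl R_e)_1 = \partial_1\div H + i\omega\eps\, r_1$. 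As $r_2=r_3=s_2=s_3=0$ the left-hand sides vanish, and combining this with the divergence identities $\partial_1 r_1 = -i\omega\mu\,\div H$ and $\partial_1 s_1 = i\omega\eps\,\div E$ (which use $R_h=(r_1,0,0)$, $R_e=(s_1,0,0)$ and $\div f_h=\div f_e=0$) yields $(\partial_1^2+\omega^2\eps\mu)\div E = (\partial_1^2+\omega^2\eps\mu)\div H = 0$. Again by non-resonance and periodicity $\div E=\div H=0$, whence $r_1=s_1=0$, so all curl equations hold and $(E,H)$ solves \eqref{eq:Maxwell:strong}.

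\emph{For part~(ii)} I would argue by explicit construction. Fix $\omega^2 = 4\pi^2 k_1^2/(\eps\mu\, l_1^2)\in\sigma(l_1)$ with $k_1\ge 1$ and set $\phi(x_1)\coloneqq\cos(2\pi k_1 x_1/l_1)$, so that $\phi''+\omega^2\eps\mu\,\phi=0$. The pair $E\coloneqq(0,0,\phi)$, $H\coloneqq\bigl(0,\,-(i\omega\mu)^{-1}\phi',\,0\bigr)$ is a nontrivial solution of the homogeneous Maxwell system: $E\times e_3=0$ and $E_1=H_1=0$ make all but two curl relations trivial, and the remaining two reduce to $\partial_1 E_3 = -i\omega\mu H_2$ and $\partial_1 H_2 = -i\omega\eps E_3$. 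By the first direction it is also a Helmholtz solution, providing the required nontrivial $(E_M,H_M)$ and $(E_H,H_H)$. Conversely, $\tilde E\coloneqq(0,\phi,0)$, $\tilde H\coloneqq 0$ solves the homogeneous system \eqref{eq:Helmholtz:strong}--\eqref{eq:Helmholz:BC} (all right-hand sides vanish since $E_1=H_1=0$ and $\phi\in\ker(\partial_1^2+\omega^2\eps\mu)$, while \eqref{eq:Helmholz:BC} only constrains $E_1$ and $\partial_3 H_1$), yet its residual $r_3=\partial_1\tilde E_2=\phi'\not\equiv 0$ shows it is not a Maxwell solution. This proves the last two assertions of (ii).

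\emph{Main obstacle.} The genuine difficulty is not the algebra above but carrying it through in the weak formulations of Definitions~\ref{def:Maxwell:weak} and \ref{def:Helmholtz:weak}: a priori the residuals $R_h,R_e$ are only distributions, so I first have to establish enough regularity in $x_1$ for the ``periodic kernel is trivial'' argument to apply. Moreover, since \eqref{eq:Maxwell:strong:2} and \eqref{eq:Helmholtz:strong:E2}--\eqref{eq:Helmholtz:strong:H3} hold only in $\Omega\setminus\Gamma$, the ODE argument must be run on $\Omega_+$ and $\Omega_-$ separately; the $x_1$-periodicity survives this splitting, so the residuals still vanish in each half, but one must verify that $E_1|_\Gamma=0$ and $\llbracket H_1\rrbracket_\Gamma=0$, together with the tangential continuity built into $H(\curl,\Omega)$, leave no spurious distributional contribution on $\Gamma$ in \eqref{eq:Maxwell:strong:1}, and that the reconstructed fields land in $X$ and $Y$. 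I expect this bookkeeping of traces at $\Gamma$ and at $\delh$ to be the most delicate step.
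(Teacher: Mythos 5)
Your plan is correct and, at the formal level, coincides with the paper's own derivation (the computations in Section~\ref{ssec.Helmholtz-type-system} are exactly your substitutions, and the paper likewise proves (i) by two separate implications and (ii) by an explicit eigenfunction plus the counterexample $E=(0,\sin(2\pi k_1x_1/l_1),0)$, $H=0$ of Remark~\ref{rem:on-equivalence}). The one genuinely different organisational choice is in the direction Helmholtz $\Rightarrow$ Maxwell: you work \emph{primally}, defining residuals $R_h,R_e$ and showing they lie in the kernel of $\partial_1^2+\omega^2\eps\mu$ on periodic functions, whereas the paper works \emph{dually} in Lemma~\ref{lem:Helmholtz-implies-Maxwell}, applying $(\partial_1^2+\omega^2\eps\mu)^{-1}$ to the \emph{test functions} (the auxiliary $\tilde\varphi$ of \eqref{eq:1dHelmholtz-EV}) and recombining the very weak Helmholtz identities to produce the weak Maxwell identities directly. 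This dual route is precisely what neutralises the obstacle you correctly flag at the end: since the inverse operator acts on smooth separated test functions rather than on the solution, no $x_1$-regularity of the residuals is ever needed, and the trace bookkeeping at $\Gamma$ and $\delh$ is absorbed into the choice of test-function classes ($D_\#(U\setminus\Gamma_U)$, $D_\#(\overline U;\Gamma_U)$, etc.) together with the density Lemma~\ref{lem:density}. Your primal version would have to justify that the distributional relations $\partial_1 r_3=i\omega\mu s_2$ etc.\ hold for a.e.\ $(x_2,x_3)$ and that the periodic ODE kernel argument applies slice-wise, which is doable but is essentially the content the paper packages into the weak/very-weak equivalence (Lemmas~\ref{lem:Vweak-Helmholtz-implies-weakHelmholtz}--\ref{lem:weak-Helmholtz-implies-VweakHelmholtz}) and the Weyl-type regularity lemmas of the appendix; be aware that this deferred part is the bulk of the paper's actual work. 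Two smaller remarks: your Maxwell $\Rightarrow$ Helmholtz direction should, as in Lemma~\ref{lem:Maxwell-implies-Helmholtz}, pass through the divergence conditions \eqref{eq:divE}--\eqref{eq:divH} obtained in Lemma~\ref{lem:Regularity-Maxwell} rather than the pointwise statements \eqref{eq:strongdivFree}; and your eigenfunction $E=(0,0,\cos(2\pi k_1x_1/l_1))$, $H=(0,-(i\omega\mu)^{-1}\partial_1 E_3,0)$ for $\omega^2\in\sigma(l_1)$ is a valid (and in the degenerate case $k_2=k_3=0$ arguably cleaner) alternative to the formulas of Lemma~\ref{lem:eigenfct-Max}.
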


	\begin{proof} We show part (i) of the proposition in two lemmas.
		Lemma~\ref{lem:Maxwell-implies-Helmholtz} provides that every
		solution of the Maxwell system is a solution of the Helmholtz
		system. Lemma~\ref {lem:Helmholtz-implies-Maxwell}, which is only
		valid for $\omega^2\notin \sigma(l_1)$, provides that every solution
		of the Helmholtz system is a solution of the Maxwell system.
		
		Regarding part (ii), we note that $\sigma(l_1) \subset \sigma_M$. Thus, we
		can apply Lemma \ref{lem:eigenfct-Max}, which provides explicit
		solutions for the homogeneous Maxwell system, for
		$\omega^2\in \sigma_M$. Since
		Lemma~\ref{lem:Maxwell-implies-Helmholtz} is valid for arbitrary
		$\omega^2>0$, these solutions are also solution to the homogeneous
		Helmholtz-type system.
		
		Regarding the last sentence, we give an example in Remark \ref
		{rem:on-equivalence}.
	\end{proof}

	\begin{remark}[On the failure of equivalence for special frequencies]
		\label{rem:on-equivalence}
		In the proof of Proposition \ref {prop:Equi:Maxwell-Helmholtz} (ii), we did not need the equivalence
		of the systems; instead, we constructed solutions for the
		homogeneous Maxwell-type system in Lemma~\ref{lem:eigenfct-Max}.
		Lemma~\ref {lem:Maxwell-implies-Helmholtz} guarantees that they
		are also solutions to the Helmholtz-type system.
		
		In the case $\omega^2 \in \sigma(l_1)$, we consider the following example: For
		$\omega^2 = 4 \pi^2 k_1^2/(\eps \mu l_1^2)$ with some $0 < k_1 \in \N$,
		we set $E_1 = E_3 = 0$, $E_2 = \sin(2 \pi k_1 x_1 /l_1)$ and
		$H = 0$. Then $(E, H)$ is a solution to the homogeneous
		Helmholtz-type system. On the other hand, because of
		$\curl E = 2 \pi k_1 /l_1 \cos(2 \pi k_1 x_1 /l_1) e_3 \neq 0 = i
		\omega \mu H$ the function $(E,H)$ is not a solution to the
		homogeneous Maxwell-type system.
	\end{remark}

	\subsection{Formal equivalence of the Maxwell system and the Helmholtz-type system}
	\label{ssec.Helmholtz-type-system} In this motivational section, we
	present calculations for smooth solutions. We consider only smooth and
	divergence-free source functions that satisfy \eqref{eq:strongdivFree}.
	
	\subsubsection*{From the Maxwell system to the Helmholtz-type system}
	Since the source terms have vanishing divergence, the same is true
	for $E$ in $\Omega\setminus \Gamma$ and for $H$ in $\Omega$, see the first two
	equations of \eqref{eq:Maxwell:strong}. 
	Taking the curl of these two equations and
	using the identity $-\Delta = \curl\curl - \nabla\div$, we obtain the Helmholtz equations
	\eqref{eq:Helmholtz:strong:E1} and
	\eqref{eq:Helmholtz:strong:H1} for the components $E_1$ and $H_1$.
	Since \eqref{eq:Maxwell:strong:2} is imposed only in $\Omega \setminus \Gamma$, the Helmholtz equations hold only on $\Omega\setminus \Gamma$. Let us sketch why \eqref{eq:Helmholtz:strong:H1} holds also across the interface. Firstly, we note that \eqref{eq:Maxwell:strong:1} is posed on $\Omega$ so that
	\begin{align}\label{eq:mot:neumanninterface}
		\llbracket H_3 \rrbracket_\Gamma = \frac{1}{i\omega \mu} (-\llbracket \partial_2 E_1 \rrbracket_\Gamma + \llbracket\partial_1 E_2\rrbracket_\Gamma -\llbracket (f_h)_3 \rrbracket_\Gamma) = 0\,.
	\end{align} 
	Therefore, the third component of $H$ has no jump.
	From \eqref{eq:Maxwell:strong:1} we therefore obtain, since $E_2$ has no jump,
	\begin{align*}
		\llbracket \partial_3 H_1 \rrbracket_\Gamma = \llbracket \partial_1 H_3 \rrbracket_\Gamma + \llbracket (f_e)_2 \rrbracket_\Gamma = 0\,.
	\end{align*}
	Relation \eqref{eq:Maxwell:strong:4} implies that $H_1$ has no jump across the interface. Since neither $H_1$ nor $\del_3 H_1$ have a jump across the interface, \eqref{eq:Helmholtz:strong:H1} holds in all of $\Omega$.

	The boundary condition \eqref{eq:Helmholz:BCE} was demanded with \eqref{eq:Maxwell:strong:3} and \eqref{eq:Maxwell:strong:5}. The component $H_3$ vanishes on the horizontal boundaries by \eqref{eq:Maxwell:strong:1} and the assumption that $(f_h)_3|_{\delh} = 0$. 
	The Neumann boundary condition \eqref{eq:Helmholz:BCH} follows from \eqref{eq:Maxwell:strong:2}.

	Regarding the other equations, we present the calculation for
	\eqref{eq:Helmholtz:strong:E2}. We look at two components of
	\eqref{eq:Maxwell:strong}, namely
	\begin{align*}
		\del_1 E_2 - \del_2 E_1 &= i\omega\mu H_3 + (f_h)_3\,,\\
		\del_3 H_1 - \del_1 H_3 &= -i\omega\eps E_2 + (f_e)_2\,.
	\end{align*}
	Applying $\del_1$ to the first equation and inserting $\del_1 H_3$
	from the second equation, we find
	\begin{align*}
		\del_1^2 E_2 - \del_1\del_2 E_1
		&= i\omega\mu \del_1 H_3 + \del_1 (f_h)_3\\
		& = i\omega\mu [\del_3 H_1 + i\omega\eps E_2 - (f_e)_2] + \del_1 (f_h)_3\\
		& = -\omega^2 \mu \eps E_2 +
		i\omega\mu [\del_3 H_1 - (f_e)_2] + \del_1 (f_h)_3\,.
	\end{align*}
	This is exactly \eqref{eq:Helmholtz:strong:E2}. The other equations
	are derived accordingly.

	\subsubsection*{From the Helmholtz-type system to the Maxwell system}
	As in the above implication, we use different calculations for the first components and for the other components. It is common to all calculations that we use the invertibility of the operator $(\partial_1^2 + \eps\mu \omega^2)$ on the interval $(0,l_1)$.
	We present the calculations for the first two components of \eqref{eq:Maxwell:strong:2}, the other relations are derived analogously.
	
	To show the first component of \eqref{eq:Maxwell:strong:2}, we multiply \eqref{eq:Helmholtz:strong:E1} by $-i\omega\eps$, take the derivative of \eqref{eq:Helmholtz:strong:H2} with respect to the third component and the derivative of \eqref{eq:Helmholtz:strong:H3} with respect to the second component.
	Summing the first two resulting equations and subtracting the third yields:
	\begin{align*}
		\begin{aligned}
			(\partial_1^2 + \eps \mu\omega^2) (\partial_3 H_2 -\partial_2 H_3)
			&= -(\Delta + \eps \mu\omega^2)(- i \omega \eps E_1)
			- \eps \mu \omega^2(f_e)_1 + i\omega \eps (\curl f_h)_1
			\\
			&\quad +
			\partial_3 ( \partial_2 \partial_1 H_1 - i \omega \eps \partial_3 E_1 - \partial_1 (f_e)_3 + i \omega \eps (f_h)_2)
			\\
			&\quad -
			\partial_2 (\partial_3 \partial_1 H_1 + i \omega \eps \partial_2 E_1 - \partial_1 (f_e)_2 + i \omega \eps (f_h)_3 )\\
			&= (\partial_1^2 + \eps \mu\omega^2)(i \omega \eps E_1 -(f_e)_1)\,,
		\end{aligned}
	\end{align*}
	where we used $\div f_e=0$.
	Inverting the operator $(\partial_1^2 + \eps \mu\omega^2)$ yields the first component of \eqref{eq:Maxwell:strong:2}.
	
	To show the second component of \eqref{eq:Maxwell:strong:2},
	we multiply \eqref{eq:Helmholtz:strong:E2} by $i\omega\eps$ and take the derivative of \eqref{eq:Helmholtz:strong:H3} with respect to the first component. Subtracting the second relation from the first yields
	\begin{align*}
		(\partial_1^2 + \eps \mu\omega^2) (\partial_3 H_1 -\partial_1 H_3) = (\partial_1^2 + \eps \mu\omega^2)(- i \omega \eps E_2 + (f_e)_2)\,.
	\end{align*}
	Inverting the operator $(\partial_1^2 + \eps \mu\omega^2)$ yields the second component of \eqref{eq:Maxwell:strong:2}. 
	
	It remains to verify the interface and boundary conditions \eqref{eq:Maxwell:strong:3}--\eqref{eq:Maxwell:strong:6} and, in order to obtain that 
	\eqref{eq:Maxwell:strong:1} holds across $\Gamma$, that $E_1$ and $E_2$ have no jump across $\Gamma$.
	Equation \eqref{eq:Helmholz:BCE} shows that $E_1$ vanishes at $\Gamma$, which gives \eqref{eq:Maxwell:strong:3} and shows that $E_1$ is continuous across $\Gamma$.
	Equation \eqref{eq:Helmholtz:strong:H1} implies that both $H_1$ and $\partial_3 H_1$ have no jump across $\Gamma$, which shows, in particular, \eqref{eq:Maxwell:strong:4}.
	Since $\partial_3 H_1$ does not jump across $\Gamma$, the whole right-hand side of \eqref{eq:Helmholtz:strong:E2} and, thus the left-hand side does not jump across $\Gamma$. By inverting the operator $(\partial_1^2 + \eps \mu\omega^2)$, we obtain the continuity of $E_2$ across $\Gamma$.
	
	In order to derive \eqref{eq:Maxwell:strong:5},
	it remains to show that $E_2$ vanishes at $\delh$.
	For this, we note that the right-hand side of \eqref{eq:Helmholtz:strong:E2} vanishes at $\delh$ since $E_1$ vanishes at $\delh$ and the remaining terms cancel by the boundary condition \eqref{eq:Helmholz:BCH} and the assumption $(f_h)_3|_{\delh} = 0$ given by \eqref{eq:strongdivFree}. Inverting the operator $(\partial_1^2 + \eps \mu\omega^2)$ in \eqref{eq:Helmholtz:strong:E2} gives $E_2|_{\delh} =0$.

	\subsection{Eigenfunctions for the Maxwell system}
	
	The next lemma provides non-trivial solutions of the homogeneous Maxwell system for
	$\omega^2 \in \sigma_M$.
	\begin{lemma}[Eigenfunctions of the Maxwell system]
		\label{lem:eigenfct-Max}
		For frequencies $\omega>0$ with $\omega^2 \in \sigma_M$, the
		homogeneous Maxwell system has a non-trivial solution. We provide an
		explicit solution by distinguishing two cases. In the case
		$\omega^2 \in \sigma(l_1,l_2,l_3)$, we find eigenfunctions that do
		not vanish along $\Gamma$. In the case
		$\omega^2 \in \sigma_M \setminus\sigma(l_1,l_2,l_3)$, we find
		eigenfunctions that live in the upper or in the lower domain.
		
		\smallskip Case $\omega^2 \in \sigma(l_1,l_2,l_3)$: We use the
		following function on $\Omega$,
		\begin{equation}
			w(x_1, x_2, x_3) = \cos(2\pi k_1 x_1/l_1) \cos(2\pi k_2 x_2/l_2)
			\cos(\pi k_3 (x_3+l_3^-)/l_3)\,,
		\end{equation}
		and set
		\begin{equation}
			\label{eq:whole-case-fields}
			E(x) = \dvec{0}{\del_3w}{-\del_2 w}\,,\qquad 
			H(x) = (i\omega\mu)^{-1}\,
			\dvec{-\del_2^2 w - \del_3^2 w}{\del_1 \del_2 w}{\del_1 \del_3 w}\,.
		\end{equation}
		We note that $E_1 = 0$ on $\Gamma$ is satisfied and $E_1 = E_2 = 0$
		holds along horizontal boundaries. Furthermore, $H_1$ has no jump
		across $\Gamma$.
		
		\smallskip Case
		$\omega^2 \in\sigma_M \setminus \sigma(l_1,l_2,l_3)$: In this case,
		for either the symbol ``$+$'' or the symbol ``$-$'' (fixed from now
		on), there holds
		$\omega^2 \in \sigma(l_1,l_2,l_3^\pm)\setminus
		\sigma(l_1,l_2,l_3)$. We note that $k_3 \neq 0$ since otherwise
		$\omega^2 \in \sigma(l_1,l_2,l_3)$. We use
		\begin{equation}
			\label{eq:w-ex-3674}
			w(x_1, x_2, x_3) = \cos(2\pi k_1 x_1/l_1) \cos(2\pi k_2 x_2/l_2)
			\sin(\pi k_3 x_3/l_3^\pm)\,,
		\end{equation}
		which is not vanishing identically because of $k_3 \neq 0$, and set
		\begin{equation}
			\label{eq:half-case-fields}
			H(x) = \one{\Omega_\pm}\dvec{0}{\del_3w}{-\del_2 w}\,,\qquad 
			E(x) = \one{\Omega_\pm}(-i\omega\eps)^{-1}\,
			\dvec{-\del_2^2 w - \del_3^2 w}{\del_1 \del_2 w}{\del_1 \del_3 w}\,.
		\end{equation}
		Note that $E_1 = 0$ on $\Gamma$ is satisfied and $E_1 = E_2 = 0$
		holds along the horizontal boundary $\{x_3 = \pm l_3^\pm\}$ by
		$\sin(\pi k_3 (\pm l_3^\pm)/l_3^\pm) = 0$. Furthermore, $H_1$ has
		no jump across $\Gamma$.
	\end{lemma}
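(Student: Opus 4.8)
The statement is a verification: both field pairs are given explicitly, so it remains to check that each solves the homogeneous system \eqref{eq:Maxwell:strong}. My plan is to exploit the common structure of the two constructions. In both cases one of the fields equals the curl of the vector potential $w\,e_1$, namely $\curl(w\,e_1) = (0,\partial_3 w, -\partial_2 w)$, and the other is obtained by a further curl; the two curl-equations \eqref{eq:Maxwell:strong:1}--\eqref{eq:Maxwell:strong:2} then collapse to a single scalar identity for $w$. Using $\div\curl\equiv 0$, the identity $\curl\curl = \nabla\div-\Delta$, and the fact that $\Delta$ commutes with $\curl$, I would first record that for any scalar $w$
\[
	\curl\curl(w\,e_1) = \nabla(\partial_1 w) - (\Delta w)\,e_1
	= (-\partial_2^2 w - \partial_3^2 w,\ \partial_1\partial_2 w,\ \partial_1\partial_3 w)\,,
\]
which is exactly the vector defining $H$ in Case 1 and $E$ in Case 2. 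The decisive input is the eigenvalue relation $-\Delta w = \omega^2\eps\mu\,w$: in Case 1 it holds on all of $\Omega$ because $w$ is the Neumann eigenfunction of $-\Delta$ for $\omega^2\in\sigma(l_1,l_2,l_3)$, and in Case 2 it holds on $\Omega_\pm$ because the $\sin(\pi k_3 x_3/l_3^\pm)$ factor makes $w$ the corresponding Dirichlet eigenfunction on the half-cell for $\omega^2\in\sigma(l_1,l_2,l_3^\pm)$. Granted this relation, in Case 1 equation \eqref{eq:Maxwell:strong:1} holds by the definition of $H$ and \eqref{eq:Maxwell:strong:2} follows from $\curl\curl F = -\Delta F = \omega^2\eps\mu\,F$ applied to the divergence-free $F=E$; in Case 2 the roles of $E$ and $H$ are interchanged.

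It then remains to check the boundary and interface conditions, and here the trigonometric structure does all the work. Periodicity \eqref{eq:Maxwell:strong:6} is immediate from the $\cos(2\pi k_j x_j/l_j)$ factors. In Case 1 the component $E_1$ vanishes identically, giving \eqref{eq:Maxwell:strong:3}; the argument $\cos(\pi k_3(x_3+l_3^-)/l_3)$ enforces $\partial_3 w = 0$ at $x_3=l_3^+$ and $x_3=-l_3^-$, so the tangential component $E_2=\partial_3 w$ vanishes on $\delh$ and \eqref{eq:Maxwell:strong:5} holds, while $H_1$ is smooth across $\Gamma$, making \eqref{eq:Maxwell:strong:4} trivial. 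In Case 2 the fields are supported in $\Omega_\pm$ and jump across $\Gamma$. Since \eqref{eq:Maxwell:strong:2} is posed only on $\Omega\setminus\Gamma$, the jump of $H$ is harmless there, and $H_1\equiv 0$ makes \eqref{eq:Maxwell:strong:4} automatic. The point requiring genuine attention is \eqref{eq:Maxwell:strong:1}, posed on all of $\Omega$: the distributional curl of the discontinuous $E$ carries a surface term controlled by $\llbracket E\rrbracket_\Gamma\times e_3 = (E_2|_\Gamma,\,-E_1|_\Gamma,\,0)$, which must vanish. I would verify $E_1|_\Gamma=E_2|_\Gamma=0$ directly: each entry of $\curl\curl(w\,e_1)$ feeding $E_1,E_2$ either retains the factor $\sin(\pi k_3 x_3/l_3^\pm)$ or is a second $x_3$-derivative proportional to $w$ itself, and all of these vanish at $x_3=0$. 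Hence no spurious interface current arises and \eqref{eq:Maxwell:strong:1} holds weakly across $\Gamma$; the same $\sin$-factor vanishing at $x_3=\pm l_3^\pm$ yields \eqref{eq:Maxwell:strong:5}, and $E_1|_\Gamma=0$ yields \eqref{eq:Maxwell:strong:3}.

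The main obstacle I anticipate is not any single computation but the non-triviality claim, together with the interface bookkeeping above. In Case 2 non-triviality is safe since $k_3\neq 0$ forces $\partial_3 w\not\equiv 0$, hence $H\neq 0$. In Case 1, however, the ansatz $E=\curl(w\,e_1)$ degenerates to zero precisely when $w$ depends on $x_1$ alone, i.e.\ when $k_2=k_3=0$; this is exactly the subset $\sigma(l_1)\subset\sigma(l_1,l_2,l_3)$ singled out later. For $\omega^2\in\sigma(l_1,l_2,l_3)$ admitting a representation with $(k_2,k_3)\neq(0,0)$ one simply selects such a representation. For the remaining purely one-dimensional frequencies $\omega^2=4\pi^2 k_1^2/(\eps\mu l_1^2)$ the curl-of-$w\,e_1$ construction cannot produce a non-trivial field, and I would instead exhibit the transverse mode $E=(0,0,\sin(2\pi k_1 x_1/l_1))$ with $H=(i\omega\mu)^{-1}\curl E$; one checks directly that this pair solves \eqref{eq:Maxwell:strong}, noting that $E_3$ is unconstrained on $\delh$ so that \eqref{eq:Maxwell:strong:5} only demands $E_1=E_2=0$, which holds. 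Finally, since all fields are piecewise trigonometric, classical validity of the equations on each open subdomain together with the verified boundary and interface relations yields a weak solution in the sense of Definition~\ref{def:Maxwell:weak}, the only weak-form subtlety being the interface term already controlled above.
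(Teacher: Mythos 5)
Your verification follows the same route as the paper: the paper's ``proof'' consists precisely of exhibiting the fields \eqref{eq:whole-case-fields} and \eqref{eq:half-case-fields} and asserting the boundary and interface properties, and your observation that both cases reduce to $\curl(w\,e_1)$ and $\curl\curl(w\,e_1)=\nabla(\partial_1 w)-(\Delta w)e_1$ together with the eigenvalue relation $-\Delta w=\omega^2\eps\mu\,w$ is exactly the computation the authors leave implicit. Your treatment of the only delicate point in Case 2 --- that the distributional curl of the truncated field $E=\one{\Omega_\pm}(\dots)$ acquires no surface term because $E_1|_\Gamma$ and $E_2|_\Gamma$ vanish (each entry either carries the factor $\sin(\pi k_3 x_3/l_3^\pm)$ or is a second $x_3$-derivative proportional to $w$) --- is correct and is the right thing to check for \eqref{eq:Maxwell:strong:1} to hold across $\Gamma$ in the weak sense of Definition~\ref{def:Maxwell:weak}. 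The genuinely valuable addition in your write-up is the non-triviality discussion: you are right that for $\omega^2\in\sigma(l_1)$, i.e.\ when the only admissible representation has $k_2=k_3=0$, the fields \eqref{eq:whole-case-fields} vanish identically, so the paper's explicit construction does not by itself establish the claimed non-trivial solution for these frequencies; your substitute mode $E=(0,0,\sin(2\pi k_1 x_1/l_1))$, $H=(i\omega\mu)^{-1}\curl E$ does lie in $X\times Y$ (note $E\times e_3=(E_2,-E_1,0)$ involves only the vanishing components, and $H_1\equiv 0$), satisfies both curl equations, and is consistent with Remark~\ref{rem:on-equivalence}, which only rules out the pair with $H=0$. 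So your proposal is not merely a correct reproduction of the paper's argument but closes a small gap in it; the only caveat is that for $\omega^2\in\sigma(l_1,l_2,l_3)$ admitting several representations one should, as you say, simply pick one with $(k_2,k_3)\neq(0,0)$ when available.
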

	
	\begin{remark}[On the spectrum of the polarization interface problem]
		The spectrum of the Maxwell system \eqref{eq:Maxwell:strong} without
		interface (obtained by demanding
		$\llbracket H_2 \rrbracket_\Gamma = 0$ instead of
		\eqref{eq:Maxwell:strong:3}), is given by $\sigma(l_1,l_2,l_3)$.
		The spectrum of the Maxwell system \eqref{eq:Maxwell:strong} with a
		full reflection interface condition (obtained by demanding
		$E_2|_\Gamma = 0$ instead of \eqref{eq:Maxwell:strong:4}), is
		$\sigma(l_1,l_2,l_3^+) \cup \sigma(l_1,l_2,l_3^-)$. Thus, for this
		simple geometry, the spectrum for the polarization interface
		condition is the union of these spectra.
		
		We include the following warning: It is not true that every
		eigenfunction for the full reflection condition is also an
		eigenfunctions for the polarization condition. An example is constructed with
		$w$ from \eqref {eq:w-ex-3674} (choosing either $+$ or $-$) by setting
		\begin{equation}
		\label{eq:half-case-fields-full-refl}
		H(x) = \one{\Omega_\pm}\dvec{-\del_3w}{0}{\del_1 w}\,,\qquad 
		E(x) = \one{\Omega_\pm}(-i\omega\eps)^{-1}\,
		\dvec{\del_1 \del_2 w}{-\del_1^2 w - \del_3^2 w}{\del_2 \del_3 w}\,.
		\end{equation}
		This pair satisfies the Maxwell equations in $\Omega\setminus\Gamma$, 
		$E_1$ and $E_2$ vanish along $\Gamma$, it is therefore a solution of the full reflection problem. 
		On the other hand, $H_1$ has a jump across $\Gamma$, the above fields are 
		therefore no solution to the polarization problem.
	\end{remark}
	
	The remainder of this article is structured as follows: We provide
	solution concepts for the Maxwell system in Section~\ref
	{sec.Maxwell}, discuss two different solution concepts for the
	Helmholtz-type system in Section~\ref {sec.Helmholtz}, show the
	equivalence of the two systems in Section~\ref {sec.equivalence}, and
	provide existence and uniqueness results for the Helmholtz system in
	Section~\ref {sec.ex-un-Helmholtz}.

	\section{Maxwell system}
	\label{sec.Maxwell}
	
	In this section, we introduce and analyze the weak solution concept
	for the Maxwell system \eqref{eq:Maxwell:strong}. We consider always
	the geometry of \eqref{eq:def:I1-I2-I3}--\eqref{eq:def:Omega}.

	\subsection{Weak form of the Maxwell system}
	We define the periodic extension of the sets $\Omega$ and $\Gamma$ in
	the $(e_1, e_2)$-plane by
	\begin{align*}
		\Omega_\# \coloneqq \R^2 \times \{-l_3^-, l_3^+\} \,,
		\qquad \Gamma_\# \coloneqq \R^2 \times \{0\} \,.
	\end{align*}
	Every function $u$ on $\Omega$ is identified with its periodic
	extension $\tilde{u}$ on $\Omega_\#$, defined with the rule
	\begin{align*}
		\tilde{u}(k_1 l_1 + k_2 l_2 + x)=u(x) \qquad
		\forall k_1 ,k_2 \in \Z \,,\ x \in \Omega\,.
	\end{align*}
	In order to impose the periodicity condition
	\eqref{eq:Maxwell:strong:6}, we introduce the function spaces
	\begin{align*}
		H_\#(\curl, \Omega)
		&\coloneqq \{ u \in H(\curl, \Omega) \mid \tilde{u} \in H_\loc(\curl, \Omega_\#)\}\,,
		\\
		H_\#(\curl, \Omega\setminus \Gamma)
		&\coloneqq \{ u \in H(\curl, \Omega\setminus \Gamma) \mid \tilde{u}
		\in H_\loc(\curl, \Omega_\#\setminus \Gamma_\#)\}\,.
	\end{align*}
	These two spaces impose the periodicity conditions in $x_1$- and
	$x_2$-direction. The second space is larger than the first space,
	functions in $H_\#(\curl, \Omega\setminus \Gamma)$ can have a jump
	across $\Gamma$ (in all components).
	
	Using trace theorems, one can conclude that the
	traces of tangential components are periodic, e.g.:
	$E_3(x_1 = l_1, x_2, x_3) = E_3(x_1 = 0, x_2, x_3)$ for almost every
	$x_2, x_3$ in the sense of traces. As a warning, we note that the two
	properties $u \in H_\loc(\curl, \Omega_\#)$ and periodicity together
	do not imply $u|_\Omega \in H(\curl, \Omega)$; the reason is that the
	$H(\curl, \Omega)$-norm could still be unbounded. Thus, our
	requirement $u \in H(\curl, \Omega)$ is an additional assumption.
	
	In order to incorporate the interface condition
	\eqref{eq:Maxwell:strong:3} and the boundary conditions
	\eqref{eq:Maxwell:strong:5} for the electric field $E$, we define a
	subspace $X \subset H_\#(\curl, \Omega)$. To introduce the interface
	condition \eqref{eq:Maxwell:strong:4} for the magnetic field, we
	define a subspace $Y \in H_\#(\curl, \Omega \setminus \Gamma)$. In
	order to define the spaces $X$ and $Y$, we use spaces of smooth
	functions. In the subsequent table, the second column indicates
	whether or not it is demanded that functions vanish in a neighborhood
	of the horizontal boundaries
	$\delh = \partial_\textrm{top}\Omega \cup
	\partial_\textrm{bot}\Omega$. The last column indicates whether or not
	the functions can have a jump across $\Gamma$ (``jump'' or ``no j.'');
	furthermore, it can be demanded that the function vanishes in a
	neighborhood of $\Gamma$ (``=0'').
	
	\begin{subequations}
		\renewcommand{\arraystretch}{1.4} 
		\begin{align*}
			\begin{array}{l|c|c}
				\text{Space}
				& \delh
				& \Gamma
				\\
				\hline	
				D_\#(\overline{\Omega})
				\coloneqq \left\{u \in C^\infty\left(\overline{\Omega}\right) \,\middle|\, \tilde{u}
				\in C^\infty\left(\overline{\Omega_\#}\right) \right\}
				& \neq 0 &\textrm{no j.}
				\\
				D_\#(\Omega)
				\coloneqq\left\{ u \in C^\infty(\overline{\Omega}) \,\middle|\,
				\supp(\tilde u)\cap \bar\Omega \subset \Omega_\# \text{ is compact} \right\}
				& = 0 &\textrm{no j.}
				\\
				D_\#(\Omega \setminus \Gamma)
				\coloneqq\left\{ u \in C^\infty(\overline{\Omega}) \,\middle|\,
				\supp(\tilde u)\cap \bar\Omega\subset \Omega_\# \setminus \Gamma_\#
				\text{ is compact} \right\}
				& = 0 &=0
				\\
				D_\#(\overline{\Omega}; \Gamma)
				\coloneqq \left\{u \colon \overline{\Omega} \to \C \,\middle|\, 
				\tilde{u}|_{\R^2\times I_3^\pm}
				\in C^\infty\left(\R^2 \times \overline{I_3^\pm} \right) \right\} & \neq 0
				&\textrm{jump}
			\end{array}
		\end{align*}
	\end{subequations}
	
	With these spaces of smooth functions, we can now define the solution
	spaces $X$ and $Y$. Essentially, the space $Y$ contains functions
	$u \in H_\#(\curl, \Omega\setminus \Gamma)$ such that $u_1$ does not
	jump across $\Gamma$; the component $u_2$ might jump across
	$\Gamma$. The space $X$ contains functions $u \in H_\#(\curl, \Omega)$
	such that $u_1$ and $u_2$ vanish at $\delh$ and the component $u_1$
	vanishes also along $\Gamma$.
	
	\begin{subequations}\label{eq:Def:XY}
		\begin{align}
			\label{eq:Def:Y}
			Y &\coloneqq \bigg\{ u \in H_\#(\curl, \Omega\setminus \Gamma)
			\,\bigg|\, \int_{\Omega\setminus \Gamma} \curl u \cdot \phi
			= \int_{\Omega\setminus \Gamma} u \cdot \curl \phi \, 
			\\
			&\hspace{1.5cm}\forall\, \phi =(\phi_1, \phi_2, \phi_3) \, ,\, \phi_1 \in
			D_\#(\Omega\setminus\Gamma)\,,\phi_2 \in D_\#(\Omega)\,,
			\phi_3 \in D_\#(\Omega)\bigg\}\,,\nonumber
			\\
			\label{eq:Def:X}
			X &\coloneqq \left\{ u \in H_\#(\curl, \Omega) \,\middle|\,
			\int_{\Omega\setminus \Gamma} \curl u \cdot \psi = \int_{\Omega\setminus \Gamma}
			u \cdot \curl \psi \ \forall \,\psi \in Y \right\}\,.
		\end{align}
	\end{subequations}
	
	We formulated the definition in such a way that $X$ is characterized
	with the help of $Y$. Indeed, one can also characterize $Y$ in terms
	of $X$. This illustrates the duality of the two spaces.

	\begin{lemma}[Characterization of $Y$ in terms of $X$]
		The two spaces $X$ and $Y$ of \eqref{eq:Def:XY} satisfy
		\begin{equation}\label{eq:Deff:Y}
			Y = \left\{ u \in H_\#(\curl, \Omega\setminus \Gamma)
			\middle| \int_{\Omega\setminus \Gamma} \curl u \cdot \phi
			= \int_{\Omega\setminus \Gamma} u \cdot \curl \phi \ \forall\, \phi \in X\right\}\,.
		\end{equation}
	\end{lemma}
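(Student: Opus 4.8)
The plan is to read both definitions as statements about a single antisymmetric pairing and thereby reduce the whole claim to one inclusion of test-function classes. Introduce the pairing
\[
a(u,v)\coloneqq\int_{\Omega\setminus\Gamma}\bigl(\curl u\cdot v-u\cdot\curl v\bigr),
\]
which is well defined whenever $u,v\in H_\#(\curl,\Omega\setminus\Gamma)$, or when one of the two is smooth. Swapping the two dot products in each term shows $a(u,v)=-a(v,u)$, so $a$ is antisymmetric. Writing $\cT$ for the class of test fields $\phi=(\phi_1,\phi_2,\phi_3)$ with $\phi_1\in D_\#(\Omega\setminus\Gamma)$ and $\phi_2,\phi_3\in D_\#(\Omega)$ that appears in \eqref{eq:Def:Y}, the definitions become $Y=\{u\in H_\#(\curl,\Omega\setminus\Gamma)\mid a(u,\phi)=0\ \forall\phi\in\cT\}$ and, via \eqref{eq:Def:X}, $X=\{u\in H_\#(\curl,\Omega)\mid a(u,\psi)=0\ \forall\psi\in Y\}$, while the set appearing on the right-hand side of \eqref{eq:Deff:Y}, call it $\tilde Y$, is $\{u\in H_\#(\curl,\Omega\setminus\Gamma)\mid a(u,\phi)=0\ \forall\phi\in X\}$.

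For the inclusion $Y\subseteq\tilde Y$ I would argue directly. Given $u\in Y$ and any $\phi\in X$, the defining property of $X$ applied with $\psi=u$ gives $a(\phi,u)=0$, and antisymmetry yields $a(u,\phi)=0$. Since this holds for every $\phi\in X$, and since $u\in H_\#(\curl,\Omega\setminus\Gamma)$, we conclude $u\in\tilde Y$.

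The heart of the matter is the reverse inclusion, and it reduces to showing $\cT\subseteq X$. This has two parts. First, a regularity check: every $\phi\in\cT$ is componentwise smooth on $\overline\Omega$ — the first component vanishes in a neighborhood of $\Gamma$, and the other two lie in $C^\infty(\overline\Omega)$ and hence carry no jump across $\Gamma$ — it is periodic and supported away from $\delh$, so $\phi\in H_\#(\curl,\Omega)$. Second, the orthogonality: for $\phi\in\cT$ and any $\psi\in Y$, the very definition of $Y$ (evaluated on the admissible test field $\phi$) states $a(\psi,\phi)=0$, whence $a(\phi,\psi)=0$ by antisymmetry; as $\psi\in Y$ was arbitrary, $\phi\in X$. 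Together these give $\cT\subseteq X$.

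With $\cT\subseteq X$ in hand, the inclusion $\tilde Y\subseteq Y$ is immediate: if $u\in\tilde Y$, then $a(u,\phi)=0$ for all $\phi\in X$, in particular for all $\phi\in\cT$, which is exactly the condition defining membership in $Y$. Combining the two inclusions proves $Y=\tilde Y$, that is, \eqref{eq:Deff:Y}. I expect the only genuinely delicate point to be the regularity step inside $\cT\subseteq X$: one must ensure that these smooth test fields really belong to $H_\#(\curl,\Omega)$ \emph{across} $\Gamma$, and not merely to $H_\#(\curl,\Omega\setminus\Gamma)$. Because $\Gamma$ is flat with normal $e_3$, what matters is continuity of the tangential components $\phi_1,\phi_2$ across $\Gamma$, and this is guaranteed by the smoothness built into $D_\#(\Omega\setminus\Gamma)$ and $D_\#(\Omega)$; the obstacle is thus careful bookkeeping with the function spaces rather than any hard estimate.
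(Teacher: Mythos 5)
Your proof is correct, and it is genuinely more elementary than what the paper announces: the authors state that their proof (deferred to the appendix) ``is based on a density argument,'' presumably showing that the test class $\cT$ from \eqref{eq:Def:Y} is dense in $X$ with respect to the $H(\curl,\Omega)$-norm and then passing to the limit in the integration-by-parts identity. You avoid density altogether by exploiting that $X$ is itself \emph{defined} through the antisymmetric pairing $a$ against all of $Y$: the inclusion $Y\subseteq\tilde Y$, which is where an approximation argument would normally be needed to upgrade the test class from $\cT$ to all of $X$, becomes a one-line consequence of the definition of $X$ plus $a(u,\phi)=-a(\phi,u)$; the reverse inclusion reduces, as you say, to $\cT\subseteq X$, and both required properties (membership in $H_\#(\curl,\Omega)$, which holds since every $\phi\in\cT$ is componentwise smooth on $\overline\Omega$ with no jump across $\Gamma$, and the vanishing of $a(\phi,\psi)$ for all $\psi\in Y$, which is literally the defining identity of $Y$ read with the two arguments swapped) are immediate. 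What the duality trick buys is brevity and independence from any approximation machinery; what the density route buys is information you do not get here, namely an identification of the abstractly (dually) defined $X$ with a concretely described trace-condition space -- a fact the paper does need elsewhere (cf.\ Lemma~\ref{lem:density}), but not for the equality \eqref{eq:Deff:Y} as stated. Your closing worry about regularity across $\Gamma$ is the right thing to check, and your resolution is correct: $\phi_1$ vanishes in a neighborhood of $\Gamma_\#$ and $\phi_2,\phi_3$ belong to $C^\infty(\overline\Omega)$ with periodic smooth extensions, so no tangential jump can occur.
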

	
	The proof of the lemma is given in the appendix. It is based on a
	density argument.
	
	\begin{remark}
		Tangential traces of functions $E\in X$ and $H\in Y$ are well
		defined, see Theorem 3.29 in \cite{Monk2003a}. In particular, in the
		sense of traces, we may write $E\times n = 0$ on $\delh$ and
		$E_1 = 0$ on $\Gamma$. Similarly, $H$ satisfies
		$\llbracket H_1\rrbracket_\Gamma = 0$.
	\end{remark} 
	
	With the help of $X$ and $Y$ we can now formulate the weak solution
	concept for the Maxwell system \eqref{eq:Maxwell:strong}. The
	motivation of our definition is: We use test-functions $\psi \in Y$ in
	\eqref{eq:Maxwell:strong:1} and test-functions $\phi \in X$ in
	\eqref{eq:Maxwell:strong:2}.
	
	\begin{definition}[Weak solutions of the Maxwell
		system]\label{def:Maxwell:weak}
		Let $(f_h, f_e) \in L^2(\Omega, \C^3)^2$. We say
		$(E, H) \in L^2(\Omega, \C^3) \times L^2(\Omega, \C^3)$ is a
		solution to the Maxwell system \eqref{eq:Maxwell:strong} if
		\begin{subequations}\label{eq:Maxwell:weak}
			\begin{align}\label{eq:Maxwell:weak:E}
				\int_{\Omega\setminus \Gamma} E \cdot \curl \psi
				&= \int_{\Omega} (i \omega \mu H + f_h) \cdot \psi &&\forall \psi \in Y \,,
				\\\label{eq:Maxwell:weak:H}
				\int_\Omega H \cdot \curl \phi
				&= \int_\Omega (-i \omega \eps E + f_e) \cdot \phi&&\forall \phi \in X \,.
			\end{align}
		\end{subequations}
	\end{definition}
	
	We obtain in the next subsection that every solution
	$(E,H) \in L^2(\Omega, \C^3)^2$ of \eqref{eq:Maxwell:weak} satisfies
	indeed $E\in X$ and $H\in Y$, see
	Lemma~\ref{lem:Regularity-Maxwell}. This shows: We could have demanded
	$E\in X$ and $H\in Y$ in the above definition without changing the
	solution concept.

	\subsection{Regularity of solutions to the Maxwell system}
	In this section, it is relevant to consider also divergence-free right-hand sides. We use the following construction. The spaces $X_0$ and
	$Y_0$ of gradients are given as
	\begin{equation}\label{eq:def:X0-Y0}
		X_0 \coloneqq \{\nabla \varphi \in X \mid \varphi \in H^1_{0,\#}(\Omega) \}\,,
		\qquad
		Y_0 \coloneqq \{\nabla \varphi \in Y \mid \varphi \in H^1_\#(\Omega\setminus \Gamma) \} 
		\,,
	\end{equation}
	where $H^1_{0,\#}(\Omega)$ and $H^1_\#(\Omega\setminus \Gamma)$ are
	the spaces of scalar-valued periodic functions, the first with
	homogeneous Dirichlet condition on the horizontal boundaries, the
	second contains functions that jump across $\Gamma$, for precise
	definitions see Section~\ref {sec.Helmholtz}. The index $0$ of $X_0$ and $Y_0$ indicates, that the are the subsets of $X$ and $Y$, respectively, with vanishing curl. The
	$L^2(\Omega)$-orthogonal complements are $X_0^\perp$ and $Y_0^\perp$.
	In the subsequent result, we make a statement on divergence-free
	right-hand sides; more precisely, we will demand $f_h \in Y_0^\perp$
	and $f_e\in X_0^\perp$. This ensures that $E \in X_0^\perp$ and
	$H\in Y_0^\perp$, i.e.~
	\begin{align}\label{eq:divE}
		\int_\Omega E \cdot g = 0 \qquad \forall g \in X_0\,,
		\\\label{eq:divH}
		\int_\Omega H \cdot g = 0 \qquad \forall g \in Y_0\,.
	\end{align}

	\begin{lemma}[Regularity of solutions to the Maxwell system]
		\label{lem:Regularity-Maxwell}
		Let $f_h, f_e \in L^2(\Omega, \C^3)$ define a right-hand side. Let
		$(E, H) \in L^2(\Omega, \C^3) \times L^2(\Omega, \C^3)$ be a
		solution to the Maxwell system in the sense of
		Definition~\ref{def:Maxwell:weak}. Then, the solution has the
		property $(E,H) \in X \times Y$. Moreover, if the right-hand side
		satisfies $(f_h, f_e) \in Y_0^\perp \times X_0^\perp$, there
		additionally holds $(E,H) \in X_0^\perp \times Y_0^\perp$.
	\end{lemma}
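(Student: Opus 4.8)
The plan is to read both assertions off the weak formulation \eqref{eq:Maxwell:weak} by inserting carefully chosen test fields. The organizing idea is that \eqref{eq:Maxwell:weak:E} and \eqref{eq:Maxwell:weak:H} should identify the distributional curls of $E$ and $H$ with the $L^2$-functions $i\omega\mu H+f_h$ and $-i\omega\eps E+f_e$; once these identifications are secured, the memberships $E\in X$ and $H\in Y$ reduce to the integration-by-parts conditions built into \eqref{eq:Def:X} and \eqref{eq:Deff:Y}, which the weak form then delivers automatically, and the divergence statement follows by testing with gradients.

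First I would identify the curls. Any compactly supported $\psi\in C^\infty_c(\Omega)^3$ lies in $D_\#(\Omega)^3$, and $D_\#(\Omega)^3\subset Y$: in the defining identity of \eqref{eq:Def:Y} the boundary contributions on $\delh$ vanish by the support, while those on $\Gamma$ cancel because $\psi$ is continuous across $\Gamma$ and the admissible fields there have continuous $\phi_2,\phi_3$ and $\phi_1$ vanishing near $\Gamma$. Testing \eqref{eq:Maxwell:weak:E} against such $\psi$ and discarding the null set $\Gamma$ gives $\int_\Omega E\cdot\curl\psi=\int_\Omega(i\omega\mu H+f_h)\cdot\psi$, so that $\curl E=i\omega\mu H+f_h\in L^2(\Omega)$ holds on all of $\Omega$, in particular across $\Gamma$. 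For $H$ I would instead use $\phi\in C^\infty_c(\Omega_\pm)^3$, which lie in $X$ because their compact support inside $\Omega_\pm$ makes every boundary term in \eqref{eq:Def:X} vanish; testing \eqref{eq:Maxwell:weak:H} on $\Omega_+$ and $\Omega_-$ separately yields $\curl H=-i\omega\eps E+f_e\in L^2(\Omega\setminus\Gamma)$. The asymmetry is deliberate: for $H$ one only obtains an $L^2$-curl on each side of $\Gamma$, matching the fact that $H$ may jump there.

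Next I would pass to the periodic spaces. With the $L^2$-curls available, testing the two equations against periodic fields of $D_\#(\Omega)^3\subset Y$ (for $E$) and against $X$-fields supported near a single lateral face inside $\Omega_\pm$ (for $H$) forces the tangential traces to agree across the periodic cell interfaces, so the periodic extensions have locally square-integrable curl; hence $E\in H_\#(\curl,\Omega)$ and $H\in H_\#(\curl,\Omega\setminus\Gamma)$. Membership is then immediate: for every $\psi\in Y$, combining \eqref{eq:Maxwell:weak:E} with $\curl E=i\omega\mu H+f_h$ gives $\int_{\Omega\setminus\Gamma}\curl E\cdot\psi=\int_{\Omega\setminus\Gamma}E\cdot\curl\psi$, which is exactly \eqref{eq:Def:X}, so $E\in X$; symmetrically, \eqref{eq:Maxwell:weak:H} together with $\curl H=-i\omega\eps E+f_e$ verifies the condition of \eqref{eq:Deff:Y} for every $\phi\in X$, so $H\in Y$.

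Finally, assuming $(f_h,f_e)\in Y_0^\perp\times X_0^\perp$, I would test with gradients. Taking $\phi=\nabla\varphi\in X_0\subset X$ in \eqref{eq:Maxwell:weak:H} annihilates the left-hand side since $\curl\nabla\varphi=0$, leaving $-i\omega\eps\int_\Omega E\cdot\nabla\varphi+\int_\Omega f_e\cdot\nabla\varphi=0$; as $X_0$ is invariant under complex conjugation (its defining conditions are real-linear), the hypothesis $f_e\in X_0^\perp$ removes the last integral, and $\omega,\eps\neq0$ give \eqref{eq:divE}. Running the same computation with $\psi=\nabla\varphi\in Y_0\subset Y$ in \eqref{eq:Maxwell:weak:E} and $f_h\in Y_0^\perp$ produces \eqref{eq:divH}. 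I expect the genuine work to sit in the two steps producing the $L^2$-curls and the periodicity, that is, in checking that the compactly supported and periodic test fields really belong to the implicitly defined spaces $X$ and $Y$ and in handling the distinct roles of $\Gamma$ for $E$ and $H$; the gradient-testing step is routine.
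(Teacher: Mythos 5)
Your proposal is correct and follows essentially the same route as the paper's proof: read off the $L^2$-curls from \eqref{eq:Maxwell:weak} with admissible smooth test fields, re-insert them to obtain the integration-by-parts identities that define $X$ and characterize $Y$ (the paper phrases the $H\in Y$ step via the remark that the test class in \eqref{eq:Def:Y} is a subset of $X$, which is the inclusion direction of \eqref{eq:Deff:Y} that you invoke), and test with gradients from $X_0$ and $Y_0$ for the orthogonality statement. Your additional care about periodic trace matching and conjugation-invariance of $X_0$, $Y_0$ only makes explicit what the paper leaves implicit.
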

	
	\begin{proof} By the definition of weak solutions, $(E, H)$ satisfies
		\eqref{eq:Maxwell:weak}. Choosing the test-functions
		$\psi \in D_\#(\Omega, \C^3)$ and
		$\phi \in D_\#(\Omega\setminus \Gamma)$ in \eqref{eq:Maxwell:weak},
		we see that \eqref{eq:Maxwell:strong:1} and
		\eqref{eq:Maxwell:strong:2} are satisfied in the sense of
		distributions. We therefore know the distributional curl of $E$ and
		$H$ and know that they are given by $L^2(\Omega)$-functions, namely,
		$\curl E =i \omega \mu H +f_h \in L^2(\Omega, \C^3)$ and
		$\curl H= -i \omega \eps E +f_e \in L^2(\Omega, \C^3)$. Thus,
		$E \in H_\#(\curl, \Omega )$ and
		$H \in H_\#(\curl, \Omega\setminus \Gamma)$.
		
		The fact that $\curl E$ and $\curl H$ are $L^2$-functions allows us to
		insert \eqref{eq:Maxwell:strong} into \eqref{eq:Maxwell:weak}, which
		yields
		\begin{align*}
			\int_{\Omega\setminus \Gamma}
			E \cdot \curl \psi
			&= \int_{\Omega\setminus \Gamma} \curl E \cdot \psi \qquad \forall \,\psi \in Y \,,
			\\
			\int_{\Omega\setminus \Gamma} H \cdot \curl \phi
			&= \int_{\Omega\setminus \Gamma} \curl H \cdot \phi \qquad \forall \,\phi \in X \,.
		\end{align*}
		The first equation verifies $E \in X$, see \eqref{eq:Def:X}. The
		second equation implies $H \in Y$, since the definition of $Y$ in
		\eqref{eq:Def:Y} considers test-functions $\phi$ only in a subset of
		$X$.
		
		\smallskip We consider now
		$(f_h, f_e) \in Y_0^\perp \times X_0^\perp$. In order to show
		$(E,H) \in X_0^\perp \times Y_0^\perp$, we choose test-functions of
		the form $\phi = \nabla \varphi \in X_0$ and
		$\psi = \nabla \eta \in Y_0$ in \eqref{eq:Maxwell:weak}. Since the
		curl of gradients vanishes, the left-hand sides of
		\eqref{eq:Maxwell:weak} vanish. Moreover, due to the orthogonality
		of $Y_0^\perp \ni f_h \perp \nabla \varphi \in Y_0$ and
		$X_0^\perp \ni f_e \perp \nabla \eta \in X_0$ the last terms on the
		right-hand sides vanish and we obtain
		\begin{align*}
			0=\int_\Omega i \omega \mu H \cdot \psi \,\ \forall \psi \in Y_0 \,,
			\qquad 0=\int_\Omega -i \omega \eps E \cdot \phi \,\ \forall \phi \in X_0\,,
		\end{align*} which shows that $(\eps E, \mu H) \in X_0^\perp \times Y_0^\perp$ and, thus, $(E, H) \in X_0^\perp \times Y_0^\perp$.
	\end{proof}
	
	We note that, for arbitrary right-hand sides
	$f_h, f_e \in L^2(\Omega, \C^3)$, the Maxwell-system can be always
	replaced by a system with $(f_h, f_e) \in Y_0^\perp \times X_0^\perp$.
	
	\begin{lemma}[Non-divergence-free right-hand sides]
		\label{lem:Helmholtz:Decomposition:Data}
		Let $f_h, f_e \in L^2(\Omega, \C^3)$ define a right-hand side for
		the Maxwell system, $\eps$ and $\mu$ independent of $x$. We consider
		the Helmholtz decompositions
		\begin{align*}
			f_h &= \tilde{f}_h + \nabla \Psi \qquad\text{ with }
			\tilde{f}_h \in Y_0^\perp \text{ and } \nabla \Psi \in Y_0\,,
			\\
			f_e &= \tilde{f}_e + \nabla \Phi \qquad \text{ with }
			\tilde{f}_e \in X_0^\perp\text{ and } \nabla \Phi \in X_0\,.
		\end{align*}
		We consider arbitrary functions $E,H \in L^2(\Omega, \C^3)$ and
		modified functions
		$\tilde{E} = E - (i \omega \eps)^{-1} \nabla \Phi$ and
		$\tilde{H} = H + (i \omega \mu)^{-1} \nabla \Psi$. Then there holds:
		$(E,H)$ is a weak solution to the Maxwell system for the right-hand
		side $(f_h, f_e)$ if and only if $(\tilde{E}, \tilde{H})$ is weak a
		solution to the Maxwell system for the right-hand sides
		$(\tilde{f}_h, \tilde{f}_e)$.
	\end{lemma}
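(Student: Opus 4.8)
The plan is to substitute the modified fields and right-hand sides directly into the weak form \eqref{eq:Maxwell:weak:E}--\eqref{eq:Maxwell:weak:H} and to verify that each of the two equations is transformed into itself. Since the weak form is linear in the tuple $(E,H,f_h,f_e)$, it suffices to track the contributions of the two gradient corrections $\nabla\Phi$ and $\nabla\Psi$. As preliminaries I would record that $\nabla\Phi\in X_0\subset X$ and $\nabla\Psi\in Y_0\subset Y$ hold by the definition \eqref{eq:def:X0-Y0}, that $\curl\nabla\Phi=0$ and $\curl\nabla\Psi=0$, and that all the integrals are bilinear (no complex conjugation) so that the defining relations of $X$ and $Y$ may be applied directly to gradient test-functions. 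I would also note that the decompositions exist because $X_0$ and $Y_0$ are closed subspaces of $L^2(\Omega,\C^3)$, so that $X_0^\perp$ and $Y_0^\perp$ are genuine orthogonal complements; in any case the lemma's hypotheses already furnish these decompositions.

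For the $E$-equation \eqref{eq:Maxwell:weak:E}, tested against $\psi\in Y$, I insert $\tilde E=E-(i\omega\eps)^{-1}\nabla\Phi$, $\tilde H=H+(i\omega\mu)^{-1}\nabla\Psi$ and $\tilde f_h=f_h-\nabla\Psi$. On the left-hand side the extra term is $-(i\omega\eps)^{-1}\int_{\Omega\setminus\Gamma}\nabla\Phi\cdot\curl\psi$, and this vanishes: applying the characterization \eqref{eq:Deff:Y} of $Y$ with $u=\psi\in Y$ and $\phi=\nabla\Phi\in X$ gives $\int_{\Omega\setminus\Gamma}\curl\psi\cdot\nabla\Phi=\int_{\Omega\setminus\Gamma}\psi\cdot\curl\nabla\Phi=0$. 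On the right-hand side the correction to $H$ produces $+\int_\Omega\nabla\Psi\cdot\psi$ (the factor $i\omega\mu$ cancels $(i\omega\mu)^{-1}$), while the correction to $f_h$ produces $-\int_\Omega\nabla\Psi\cdot\psi$, so these cancel. Hence \eqref{eq:Maxwell:weak:E} for $(\tilde E,\tilde H,\tilde f_h)$ coincides literally with \eqref{eq:Maxwell:weak:E} for $(E,H,f_h)$.

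For the $H$-equation \eqref{eq:Maxwell:weak:H}, tested against $\phi\in X$, the argument is symmetric. The extra left-hand term $(i\omega\mu)^{-1}\int_\Omega\nabla\Psi\cdot\curl\phi$ vanishes by the defining relation \eqref{eq:Def:X} of $X$ applied with $u=\phi\in X$ and $\psi=\nabla\Psi\in Y$, together with $\curl\nabla\Psi=0$. On the right-hand side a direct computation gives $-i\omega\eps\tilde E+\tilde f_e=-i\omega\eps E+\nabla\Phi+f_e-\nabla\Phi=-i\omega\eps E+f_e$, so the equation is again unchanged. Combining the two computations, $(E,H)$ satisfies \eqref{eq:Maxwell:weak} for $(f_h,f_e)$ if and only if $(\tilde E,\tilde H)$ satisfies \eqref{eq:Maxwell:weak} for $(\tilde f_h,\tilde f_e)$, which is the assertion.

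The entire argument is essentially bookkeeping, so I expect the only substantive step to be the vanishing of the two cross terms $\int\nabla\Phi\cdot\curl\psi$ and $\int\nabla\Psi\cdot\curl\phi$. This is exactly where the mutual duality of $X$ and $Y$ encoded in \eqref{eq:Def:X} and \eqref{eq:Deff:Y} is used, and it is essential that $\nabla\Phi\in X$ and $\nabla\Psi\in Y$ so that these relations are applicable; this is guaranteed by the definition \eqref{eq:def:X0-Y0} of $X_0$ and $Y_0$. Beyond this point I anticipate no real obstacle.
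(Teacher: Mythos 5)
Your argument is correct and is exactly the paper's approach: the paper's proof is the single observation that $\curl\nabla\Phi=\curl\nabla\Psi=0$, and your computation simply spells out the resulting bookkeeping (cancellation of the $\nabla\Psi$-terms on the right-hand side and vanishing of the cross terms $\int\nabla\Phi\cdot\curl\psi$ and $\int\nabla\Psi\cdot\curl\phi$ via the $X$--$Y$ duality). The only cosmetic remark is that the first cross term can be obtained directly from the defining relation \eqref{eq:Def:X} applied to $u=\nabla\Phi\in X$, without invoking the separately proven characterization \eqref{eq:Deff:Y}.
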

	
	\begin{proof}
		Lemma \ref{lem:Helmholtz:Decomposition:Data} is a direct consequence
		of the fact that the curl of a
		gradient vanishes,
		$\curl \nabla\Phi = \curl \nabla\Psi =0$.
	\end{proof}
	
	\begin{remark}[Non-divergence-free right-hand sides and general
		$\eps$, $\mu$]
		\label{rem:Helmholtz:Decomposition:Data:general}
		Lemma \ref{lem:Helmholtz:Decomposition:Data} deals with the case of
		coefficients $\eps$ and $\mu$ that are independent of $x$. For the
		general case, i.e.~$\mu, \eps \in L^\infty(\Omega, \C^{3\times3})$,
		one has to choose different constructions. The well-known approach
		in this general case is to define scalar products by
		$\langle u, v \rangle_\mu \coloneqq \int_\Omega \mu u \cdot
		\overline{v}$ and
		$\langle u, v\rangle_\eps =\int_\Omega \eps u \cdot \overline{v}$,
		and to consider with $Y_0^{\perp_\mu}$ and $X_0^{\perp_\eps}$ the
		orthogonal complements of $Y_0$ and $X_0$ with respect to these
		scalar products. Decompositions of the form
		$(i \omega \mu)^{-1} f_h = (i \omega \mu)^{-1} \tilde{f}_h + \nabla
		\Psi$ with the choice $\tilde{H} = H + \nabla \Psi$ show that it is
		sufficient to consider $f_h\in Y_0^{\perp_\mu}$ and decompositions of the form
		$(i \omega \eps)^{-1} f_e = (i \omega \eps)^{-1} \tilde{f}_e + \nabla
		\Phi$ with the choice $\tilde{E} = E - \nabla \Phi$ show that it is
		sufficient to consider $f_e\in X_0^{\perp_\eps}$.
	\end{remark}

	\section{Helmholtz-type system}
	\label{sec.Helmholtz}
	
	In this section, we analyze the Helmholtz-type system
	\eqref{eq:Helmholtz:strong}. We introduce two different solution
	concepts, weak solutions and very weak solutions. Actually, we will
	see that the two concepts coincide if we restrict the solution space for the very weak solutions to $X \times Y$. Nevertheless, the distinction is useful in
	the result regarding equivalence of Maxwell- and Helmholtz-system, see
	Section~\ref {sec.equivalence}. The solvability properties of the
	Helmholtz-system are easy to obtain, see Section~\ref
	{sec.ex-un-Helmholtz}.

	\subsection{Weak form of the Helmholtz-type system}
	In order to define the weak solution concept for
	\eqref{eq:Helmholtz:strong}, we use spaces of periodic functions. As
	before, for an arbitrary function $u \colon \Omega\to \C$, we write
	$\tilde u$ for the periodic extension of $u$. We define, in this
	order: periodic functions (no restrictions regarding interface or
	horizontal boundaries), periodic functions that vanish along $\delh$,
	periodic functions that can jump across $\Gamma$, periodic functions
	that vanish along $\delh$ and along $\Gamma$:
	\begin{align*}
		H^1_{\#}(\Omega) &\coloneqq \{u \in H^1(\Omega) \mid \tilde{u} \in H^1_\loc(\Omega_\#) \}\,,
		\\
		H^1_{0,\#}(\Omega) &\coloneqq \left\{u \in H^1_{\#}(\Omega) \middle| \int_\Omega \nabla u \cdot \psi = -\int_\Omega u \div(\psi) \ \forall \, \psi \in H^1_{\#}(\Omega,\C^3) \right\}\,,
		\\
		H^1_{\#}(\Omega\setminus \Gamma) &\coloneqq \{u \in H^1(\Omega\setminus \Gamma) \mid \tilde{u} \in H^1_\loc(\Omega_\# \setminus \Gamma_\#)\}\,,
		\\
		H^1_{0,\#}(\Omega\setminus \Gamma) &\coloneqq \left\{u \in H^1_{\#}(\Omega) \middle| \int_\Omega \nabla u \cdot \psi = -\int_\Omega u \div(\psi) \ \forall \, \psi \in H^1_{\#}(\Omega\setminus \Gamma, \C^3) \right\}\,.
	\end{align*}
	
	In the Helmholtz-type system \eqref{eq:Helmholtz:strong}, the last
	four equations contain only derivatives in the $x_1$-direction. These
	equations are ordinary differential equations with solutions that
	depend on $x_1$, for every parameter input
	$(x_2,x_3)\in I_2\times I_3$. Accordingly, for the solution concept,
	we need spaces of functions that have some $x_1$-regularity, but not
	necessarily regularity in the other coordinates.
	
	Similar to the above constructions, we associate, to every function
	$u \colon I_1 \to \C$, its periodic extension $\tilde{u} \colon \R\to \C$,
	defined through
	\begin{align*}
		\tilde u(k_1 l_1 + x_1) = u(x_1) \qquad \forall k_1 \in \Z\,,\ x_1\in I_1\,.
	\end{align*}
	We can then define
	\begin{align*}
		&D_\#(I_1) \coloneqq \{u \in C^\infty(\overline{I_1}) \mid \tilde{u} \in C^\infty(\R) \}\,,
		\\
		&H^1_{\#}(I_1) \coloneqq \{u \in H^1(I_1) \mid \tilde{u} \in H^1_\loc(\R) \}\,.
	\end{align*}
	The cross sections of $\Omega$, $\Omega\setminus \Gamma$ and $\Gamma$
	are denoted as
	\begin{align}\label{eq:def:U-Gamma_U}
		U \coloneqq I_2 \times I_3 \,, \qquad
		\Gamma_U \coloneqq I_2 \times \{0\} \,.
	\end{align}
	As solution spaces for the Helmholtz-type system we use
	\begin{subequations}
		\begin{align}
			W &\coloneqq L^2(U, H^1_\#(I_1))
			\\
			X_H &\coloneqq \left\{ E\in L^2(\Omega, \C^3) \,\middle|\,
			E_1\in H^1_{0,\#}(\Omega \setminus \Gamma)\,,\ E_2, E_3\in W \right\}\,,
			\\
			Y_H &\coloneqq \left\{ H\in L^2(\Omega, \C^3) \,\middle|\,
			H_1\in H^1_{\#}(\Omega)\,,\ H_2, H_3\in W \right\}\,.
		\end{align}
	\end{subequations}
	
	We will consider right-hand sides
	$(f_h , f_e) \in L^2(\Omega, \C^3)^2$ in the Helmholtz-type
	system (when, we show the equivalence to the Maxwell system, we consider only $(f_h , f_e) \in Y_0^\perp \times X_0^\perp$). Given such functions, we define linear forms
	$F_{E_1} \in H^1_{0,\#}(\Omega\setminus \Gamma)'$,
	$F_{H_1} \in H^1_\#(\Omega)'$ and
	$F_{E_2}, F_{H_2}, F_{E_3}, F_{H_3} \in W'$ as follows:
	\begin{subequations}\label{eq:def:F}
		\begin{align}
			\langle F_{E_1}, \phi_1 \rangle
			&\coloneqq \int_\Omega i \omega \mu (f_e)_1 \phi_1
			+ f_h \cdot \curl (e_1 \phi_1)
			&& \forall \phi_1 \in H^1_{0,\#}(\Omega\setminus \Gamma)\,,
			\\
			\langle F_{H_1}, \psi_1 \rangle
			&\coloneqq 		
			\int_\Omega -i \omega \eps (f_h)_1 \psi_1 + f_e \cdot \curl (e_1 \psi_1)
			&& \forall \psi_1 \in H^1_\#(\Omega)\,,
			\\
			\langle F_{E_2}, \phi_2 \rangle
			&\coloneqq \int_\Omega(f_h)_3 \partial_1 \phi_2
			+ i \omega \mu (f_e)_2 \phi_2 && \forall \phi_2 \in W\,,
			\\
			\langle F_{H_2}, \psi_2 \rangle
			&\coloneqq \int_\Omega(f_e)_3 \partial_1 \psi_2 - i \omega \eps (f_h)_2 \psi_2
			&& \forall \psi_2 \in W\,,
			\\
			\langle F_{E_3}, \phi_3 \rangle
			&\coloneqq \int_\Omega -(f_h)_2 \partial_1 \phi_3
			+ i \omega \mu (f_e)_3 \phi_3 && \forall \phi_3 \in W\,,
			\\
			\langle F_{H_3}, \psi_3 \rangle
			&\coloneqq \int_\Omega -(f_e)_2 \partial_1 \psi_3
			- i \omega \eps (f_h)_3 \psi_3 && \forall \psi_3 \in W\,.
		\end{align}
	\end{subequations}

	\begin{definition}[Weak solution of the Helmholtz-type
		system]\label{def:Helmholtz:weak}
		We consider a right-hand side $(f_h , f_e) \in L^2(\Omega,
		\C^3)^2$. We say $(E, H) \in X_H \times Y_H$ is a weak solution of
		the Helmholtz-type system \eqref{eq:Helmholtz:strong} if, for all
		$\phi \in X_H$ and all $\psi \in Y_H$:
		\begin{subequations}\label{eq:Helmholtz:weak}
			\begin{align}\label{eq:Helmholtz:weak:E1}
				\int\limits_\Omega \left\{ \nabla E_1 \cdot \nabla \phi_1
				- \omega^2 \eps \mu E_1 \phi_1\right\}
				&= \langle F_{E_1}, \phi_1 \rangle \,,
				\\
				\label{eq:Helmholtz:weak:H1}
				\int\limits_\Omega \left\{\nabla H_1 \cdot \nabla \psi_1
				- \omega^2 \eps \mu H_1 \psi_1\right\} &= \langle F_{H_1}, \psi_1 \rangle \,,
				\\
				\label{eq:Helmholtz:weak:E2}
				\int\limits_{\Omega } \left\{\partial_1 E_2 \partial_1 \phi_2 - \omega^2 \eps \mu E_2 \phi_2\right\}
				&= \int\limits_{\Omega } \left\{\partial_2 E_1 \partial_1 \phi_2 - i \omega \mu \partial_3 H_1 \phi_2\right\} + \langle F_{E_2} , \phi_2 \rangle \,,
				\\
				\label{eq:Helmholtz:weak:H2}
				\int\limits_{\Omega } \left\{\partial_1 H_2 \partial_1 \psi_2 - \omega^2 \eps \mu H_2 \psi_2\right\}
				&= \int\limits_{\Omega } \left\{\partial_2 H_1\partial_1 \psi_2 - i \omega \eps \partial_3 E_1 \psi_2\right\} + \langle F_{H_2} , \psi_2 \rangle \,,
				\\
				\label{eq:Helmholtz:weak:E3}
				\int\limits_{\Omega }\left\{ \partial_1 E_3 \partial_1 \phi_3 - \omega^2 \eps \mu E_3 \phi_3\right\} 
				&= 
				\int\limits_{\Omega } \left\{\partial_3 E_1 \partial_1 \phi_3 + i \omega \mu \partial_2 H_1 \phi_3\right\} +\langle F_{E_3} , \phi_3 \rangle \,,
				\\
				\label{eq:Helmholtz:weak:H3}
				\int\limits_{\Omega }\left\{ \partial_1 H_3 \partial_1 \psi_3 -	\omega^2 \eps \mu H_3 \psi_3 \right\}
				&= 
				\int\limits_{\Omega} \left\{\partial_3 H_1 \partial_1 \psi_3 - i \omega \eps \partial_2 E_1 \psi_3\right\} +\langle F_{H_3} , \psi_3 \rangle \,.
			\end{align}
		\end{subequations}
	\end{definition}
	
	\subsection{Very weak solutions of the Helmholtz-type system}
	
	Additionally to the weak solution concept, we introduce the concept of
	very weak solutions. Essentially, in the very weak concept, all
	derivatives are moved to the test-functions, it therefore has the
	character of a distributional concept. We do not call it a
	distributional solution since test-functions do not necessarily have a
	compact support. Indeed, we encode some boundary and interface
	conditions with test-functions that are not vanishing at boundaries.

	\begin{definition}[Very weak solution of the Helmholtz-type problem]
		\label{def:Helmholtz:Vweak}
		Let $(f_h , f_e) \in L^2(\Omega, \C^3)^2$ and
		$F_{E_1}, F_{H_1}, F_{E_2}, F_{H_2}, F_{E_3}, F_{H_3} $ be given by
		\eqref{eq:def:F}. We say that
		$(E,H) \in L^2(\Omega, \C^3) \times L^2(\Omega, \C^3)$ is a very
		weak solution to the Helmholtz-type problem
		\eqref{eq:Helmholtz:strong} if
		\begin{subequations}\label{eq:Helmholtz:Vweak}
			\begin{align}\label{eq:Helmholtz:Vweak:E1}
				\int\limits_\Omega \left\{- E_1 \Delta \phi_1 - \omega^2 \eps \mu E_1 \phi_1\right\}
				&= \langle F_{E_1}, \phi_1 \rangle \,,
				\\
				\label{eq:Helmholtz:Vweak:H1}
				\int\limits_\Omega\left\{ - H_1 \Delta \psi_1 - \omega^2 \eps \mu H_1 \psi_1\right\}
				&= \langle F_{H_1}, \psi_1 \rangle \,,
				\\
				\label{eq:Helmholtz:Vweak:E2}
				\int\limits_{\Omega }\left\{ - E_2 \partial_1^2 \phi_2 - \omega^2 \eps \mu E_2 \phi_2\right\}
				&= \int\limits_{\Omega } \left\{-E_1 \partial_2 \partial_1 \phi_2 + i \omega \mu H_1 \partial_3 \phi_2 \right\}+\langle F_{E_2}, \phi_2 \rangle \,,
				\\
				\label{eq:Helmholtz:Vweak:H2}
				\int\limits_{\Omega } \left\{- H_2 \partial_1^2 \psi_2 - \omega^2 \eps \mu H_2 \psi_2\right\}
				&= \int\limits_{\Omega } \left\{-H_1 \partial_2 \partial_1 \psi_2 - i \omega \eps E_1 \partial_3 \psi_2\right\} +\langle F_{H_2}, \psi_2 \rangle \,,
				\\
				\label{eq:Helmholtz:Vweak:E3}
				\int\limits_{\Omega } \left\{-E_3 \partial_1^2 \phi_3 - \omega^2 \eps \mu E_3 \phi_3 \right\}
				&= 
				\int\limits_{\Omega } \left\{-E_1 \partial_3 \partial_1 \phi_3 - i \omega \mu H_1 \partial_2 \phi_3\right\} +\langle F_{E_3}, \phi_3 \rangle \,,
				\\
				\label{eq:Helmholtz:Vweak:H3}
				\int\limits_{\Omega } \left\{-H_3 \partial_1^2 \psi_3 -	\omega^2 \eps \mu H_3 \psi_3 \right\}
				&= \int\limits_{\Omega } \left\{-H_1 \partial_3 \partial_1 \psi_3 + i \omega \eps E_1 \partial_2 \psi_3\right\}+ \langle F_{H_3}, \psi_3 \rangle \,,
			\end{align}
			for all test-functions
			\begin{align}
				&\phi_1 \in D_\#(\Omega\setminus \Gamma)\,,\quad \phi_2 \in D_\#(\Omega) \,,\quad
				\phi_3 \in D_\#(\overline{\Omega}; \Gamma) \,,
				\\
				&\psi_1\in \{\psi_1 \in D_\#(\overline{\Omega}) \mid \partial_3 \psi_1 \in D_\#(\Omega)\}\,,
				\quad \psi_2 \in D_\#(\overline{\Omega}; \Gamma) \,,\quad \psi_3 \in D_\#(\Omega) \,.
			\end{align}
		\end{subequations}
	\end{definition}

	\subsection{Equivalence of the weak and very weak Helmholtz solution
		concept}
	
	In this section, we show the equivalence of the solution concepts.
	Lemma \ref{lem:Vweak-Helmholtz-implies-weakHelmholtz} provides that
	every very weak solution of the Helmholtz-type system (Definition
	\ref{def:Helmholtz:Vweak}), which has the additional regularity
	$(E,H) \in X \times Y$, is also a weak solution of the Helmholtz-type
	system (Definition \ref{def:Helmholtz:weak}). Lemma
	\ref{lem:weak-Helmholtz-implies-VweakHelmholtz} provides that every
	weak solution is also a very weak solution.
	
	For the proof of the following lemma, we use another space of periodic
	functions. In the spirit of previous constructions, every function $u$
	on $I_1 \times I_2$ is identified with its periodic extension
	$\tilde{u}$ on $\R^2$, defined as
	\begin{align*}
		\tilde u(k_1 l_1 + k_2 l_2 + x)=u(x) \qquad
		\forall k_1 ,k_2 \in \Z\,,\ x \in I_1 \times I_2\,,
	\end{align*}
	and the corresponding space is
	\begin{align*}
		D_\#(I_1 \times I_2) \coloneqq \{ u \in C^\infty(\overline{I_1} \times \overline{I_2})
		\mid \tilde u\in C^\infty(\R^2) \}\,.
	\end{align*}
	Moreover, we use the spaces
	\begin{align*}
		&D(I_3\setminus \{0\}) \coloneqq \left\{ u \in C^\infty_c(I_3 \setminus\{0\}) \right\}\,,
		\\
		&D(\overline{I_3}; \{0\}) \coloneqq \left\{ u \colon \overline{I}_3 \to \C \, \middle|\, u|_{I_3^\pm} \in C^\infty\left(\overline{I_3^\pm}\right) \right\}\,.
	\end{align*}

	\begin{lemma}[Very weak implies weak]
		\label{lem:Vweak-Helmholtz-implies-weakHelmholtz}
		We consider
		right-hand sides $(f_h, f_e) \in L^2(\Omega, \C^3)^2$. Let
		$(E, H)\in L^2(\Omega, \C^3)^2$ be a very weak solution to the
		Helmholtz-type system of Definition~\ref{def:Helmholtz:Vweak} with
		the regularity $(E, H)\in X \times Y$. Then, $(E,H)$ is also a weak
		solution to the Helmholtz-type system, see
		Definition~\ref{def:Helmholtz:weak}. In particular,
		$(E,H) \in X_H \times Y_H$.
	\end{lemma}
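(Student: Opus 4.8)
The plan is to split the argument into a regularity step and an integration-by-parts step, closed off by a density argument; throughout I use that the hypothesis $(E,H)\in X\times Y$ already supplies $E\in H_\#(\curl,\Omega)$ and $H\in H_\#(\curl,\Omega\setminus\Gamma)$ together with the traces $E_1=0$ on $\Gamma\cup\delh$, $E_2=0$ on $\delh$, and the absence of a jump of $H_1$ across $\Gamma$. First I would establish the missing scalar regularity, namely that in fact $(E,H)\in X_H\times Y_H$. The very weak equation \eqref{eq:Helmholtz:Vweak:E1}, tested with $\phi_1\in D_\#(\Omega\setminus\Gamma)$, states that $-\Delta E_1-\omega^2\eps\mu E_1$ equals the distribution associated with $F_{E_1}$ on $\Omega\setminus\Gamma$; since this functional is built from $L^2$-data and $E_1\in L^2$, elliptic regularity for the Helmholtz operator on the two subdomains $\Omega_+$ and $\Omega_-$ yields $E_1\in H^1(\Omega_\pm)$, and the homogeneous Dirichlet traces inherited from $X$ upgrade this to $E_1\in H^1_{0,\#}(\Omega\setminus\Gamma)$. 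The same reasoning applied to \eqref{eq:Helmholtz:Vweak:H1}, now using that $H_1$ has no jump across $\Gamma$, gives $H_1\in H^1_\#(\Omega)$.

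Next I would treat the transverse components. Once $E_1,H_1\in H^1$, the mixed terms on the right of \eqref{eq:Helmholtz:Vweak:E2}--\eqref{eq:Helmholtz:Vweak:H3} can be integrated by parts in the $x_2$- and $x_3$-directions, so that for the principal part each of $E_2,E_3,H_2,H_3$ satisfies a distributional identity in which only $\partial_1^2$ acts on the unknown and the right-hand side is an $x_1$-derivative of an $L^2(\Omega)$-function plus an $L^2(\Omega)$-function. Interpreting this as a family of one-dimensional Helmholtz equations on $I_1$ for almost every cross-section in $U$, and assembling the cross-sectional solutions by a Fubini/measurable-selection argument, I obtain $\partial_1E_2,\partial_1E_3,\partial_1H_2,\partial_1H_3\in L^2(\Omega)$, i.e.\ membership in $W$. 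This establishes $(E,H)\in X_H\times Y_H$.

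With this regularity available, the second step is to pass from each very weak identity to the corresponding weak identity of Definition~\ref{def:Helmholtz:weak} by integrating by parts once, moving a derivative from the smooth test-function back onto the now regular solution. In every one of the six identities the interface and boundary contributions produced by this integration either involve $E_1$, which vanishes on $\Gamma$ and on $\delh$ by the $X$-membership, or involve $\partial_3\psi_1$ of a test-function that is required to vanish near $\delh$, or cancel by the lateral periodicity; hence all such terms drop and the surviving volume terms reproduce exactly the right-hand sides of \eqref{eq:Helmholtz:weak:E1}--\eqref{eq:Helmholtz:weak:H3}. Finally, since the smooth test-function classes of Definition~\ref{def:Helmholtz:Vweak} are dense in the spaces $H^1_{0,\#}(\Omega\setminus\Gamma)$, $H^1_\#(\Omega)$ and $W$ entering $X_H$ and $Y_H$, in the norms for which the weak identities are continuous, the identities extend to all admissible $\phi\in X_H$ and $\psi\in Y_H$, which is precisely the assertion.

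I expect the regularity step to be the main obstacle, and within it the transverse part. The first-component equations deliver genuine elliptic regularity only after the $H^{-1}$-structure of the data and the correct boundary spaces have been identified near the edges of the cuboid, and the passage to the one-dimensional equations for $E_2,E_3,H_2,H_3$ requires a careful Fubini argument to justify that the cross-sectional solutions genuinely assemble into a function of class $W$. By contrast, the integration-by-parts and density arguments are routine once every boundary and interface term has been shown to vanish through the conditions encoded in $X$ and $Y$.
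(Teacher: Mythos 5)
Your overall architecture (first-component regularity for $E_1,H_1$, then $W$-regularity of the transverse components via one-dimensional Helmholtz equations assembled by Fubini, then integration by parts and a density argument) coincides with the paper's, and the second and third steps are described correctly in outline. The genuine gap sits exactly where you yourself locate the main obstacle: the claim that ``elliptic regularity for the Helmholtz operator on the two subdomains yields $E_1\in H^1(\Omega_\pm)$'' is not a valid deduction. The very weak equation \eqref{eq:Helmholtz:Vweak:E1} is tested only against $\phi_1\in D_\#(\Omega\setminus\Gamma)$, i.e.\ against functions vanishing near $\Gamma$ and near $\delh$, so together with Weyl's lemma it yields only the interior statement $E_1\in H^1_\loc(\Omega_\pm)$; an $L^2$ solution of $-\Delta u=F$ with $F\in H^{-1}$ need not lie in $H^1$ up to the boundary (harmonic $L^2$ functions on a cuboid that are not in $H^1$ furnish counterexamples). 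Up-to-the-boundary regularity can only be obtained by feeding boundary data into the regularity argument itself, and the subsequent ``upgrade using the traces inherited from $X$'' is likewise not available at that stage: membership $E\in X$ encodes $E_1|_\Gamma=0$ only as a tangential $H(\curl)$-trace, not as the $H^{1/2}$-trace of an $H^1$ function, so there is nothing yet to upgrade.

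What is actually needed, and what the paper does, is: (a) extract from the defining identity \eqref{eq:Def:X} of $X$, using test functions $\psi=(0,\zeta\varphi,0)$ with $\zeta\in D_\#(I_1\times I_2)$ and $\varphi\in D(\overline{I_3};\{0\})$, a distributional Dirichlet condition of the form \eqref{eq:vanish:distributional:Dirichlet} for the averages $x_3\mapsto\int_{I_1\times I_2}E_1\zeta$ at $x_3=0$ and $x_3=\pm l_3^\pm$; and (b) combine this with the interior equation by an odd reflection across these boundaries, so that Weyl's lemma on the reflected domain gives $H^1$-regularity up to the boundary together with the vanishing trace (Lemma \ref{lem:regularity:Distrib-Boundary-Dirichlet}); the analogous even-reflection argument (Lemma \ref{lem:regularity:Distrib-Boundary-Neumann}) handles $H_1$, whose no-jump condition across $\Gamma$ must likewise first be converted into a quantitative statement about $\int_{I_1\times I_2}H_1\zeta$ before it can be used. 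Without some version of (a) and (b), the first-component regularity --- and with it the whole lemma --- remains unproved. The rest of your plan, including the cross-sectional ODE estimate integrated over $U$ to obtain membership in $W$ and the closing density argument, matches the paper and is sound.
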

	
	\begin{proof}
		Let $(E,H)\in X \times Y$ be a very weak solution to the
		Helmholtz-type system, i.e., a solution to
		\eqref{eq:Helmholtz:Vweak}. We have to show the regularities
		$E_1 \in H^1_{0,\#}(\Omega \setminus \Gamma)$,
		$H_1 \in H^1_\#(\Omega)$, $E_2$, $H_2$, $E_3$, $H_3 \in W$ and that
		$(E,H)$ satisfies \eqref{eq:Helmholtz:weak}. The systems
		\eqref{eq:Helmholtz:Vweak} and \eqref{eq:Helmholtz:weak} differ
		essentially by some integration by parts and we have to show that
		these integrations by parts are admissible and do not come with
		boundary terms.
		
		\smallskip \textbf{$H^1_{0, \#}(\Omega\setminus \Gamma)$-regularity
			of $E_1$}: Let
		$\zeta \in D_\#(I_1 \times I_2)$ and
		$\varphi \in D(\overline{I_3};\{0\})$ be arbitrary. Noting that
		$E \in X$, we choose $\psi = (0,\zeta \varphi,0) \in Y$ in the
		definition of $X$, see \eqref{eq:Def:X}, which gives
		\begin{align*}
			\int_{\Omega} (\curl E)_2 \zeta \varphi
			= \int_{\Omega} \{-E_1 \zeta \partial_3 \varphi + E_3 \partial_1\zeta \varphi \}
			\,.
		\end{align*}
		We rewrite this with Fubini's theorem as
		\begin{align*}
			\int_{I_3\setminus \{0\}} \left(-\int_{I_1 \times I_2}E_1 \zeta\right) \partial_3 \varphi =
			\int_{I_3\setminus \{0\}} \left(\int_{I_1 \times I_2}
			\{ - E_3 \partial_1\zeta + (\curl E)_2 \zeta\} \right) \varphi \,. 
		\end{align*}
		This relation allows to conclude that the expression
		$\int_{I_1 \times I_2}E_1 \zeta$ has a distributional derivative in
		direction $x_3$ and that the expression is vanishing at the points
		$x_3 = 0$ and $x_3 = \pm l_3^\pm$. Since $\zeta$ was arbitrary, this fact
		encodes that $E_1$ vanishes at the top boundary and along $\Gamma$.
		
		This shows that $E_1$ solves the Poisson-problem
 \eqref{eq:Helmholtz:Vweak:E1} with periodicity and
 Dirichlet boundary conditions. A Weyl-type lemma (see
 Lemma \ref{lem:regularity:Distrib-Boundary-Dirichlet})
 implies the regularity and the boundary conditions
 that are encoded with
 $E_1 \in H^1_{0,\#}(\Omega\setminus \Gamma)$. We note
 that we include Lemma
 \ref{lem:regularity:Distrib-Boundary-Dirichlet} to
 have this text self-contained; it might also be
 possible to conclude with results of Chapter 3 of
 \cite{Kozlov-Mazya-Rossmann-1997}.

		An integration by parts in \eqref{eq:Helmholtz:Vweak:E1} is
		permitted and we obtain \eqref{eq:Helmholtz:weak:E1}.
		
		\smallskip \textbf{$H^1_\#(\Omega)$-regularity of $H_1$}: Let
		$\zeta \in D_\#(I_1 \times I_2)$ and
		$\varphi \in C_c^\infty(I_3)$. Noting that $H \in Y$, we choose
		$\phi = (0,\zeta \varphi,0)$ in the definition of $Y$, see
		\eqref{eq:Def:Y}, and find
		\begin{align*}
			\int_{\Omega} (\curl H)_2 \zeta \varphi = \int_{\Omega}
			\{-H_1 \zeta \partial_3 \varphi + H_3 \partial_1\zeta \varphi \}
			\,.
		\end{align*}
		Since $\varphi \in C_c^\infty(I_3)$ was arbitrary, this equation
		implies
		\begin{equation*}
			\partial_3 \left(\int_{I_1 \times I_2} H_1 \zeta\right) 
			= \int_{I_1 \times I_2}
			\{- H_3 \partial_1\zeta + (\curl H)_2 \zeta \}
			\quad \text{ in } L^2(I_3) \,,
		\end{equation*}
		and, in particular, that the expression
		$\int_{I_1 \times I_2}H_1 \zeta$ has no jump at $x_3 = 0$. Since
		$\zeta \in D_\#(I_1 \times I_2)$ was
		arbitrary, we conclude that $H_1$ has no jump across $\Gamma$.
		Together with the fact that $H_1$ solves
		\eqref{eq:Helmholtz:Vweak:H1}, we deduce $H_1 \in H^1_\#(\Omega)$,
		see Lemma \ref{lem:regularity:Distrib-Boundary-Neumann}. Integration
		by parts is allowed and equation \eqref{eq:Helmholtz:Vweak:H1}
		implies \eqref{eq:Helmholtz:weak:H1}.
		
		\smallskip \textbf{$W$-regularity of $E_2, H_2, E_3 , H_3$}: We
		present the argument for $E_2$. We choose $\phi_2$ of the form
		$\phi_2(x_1,x_2,x_3) = \varphi(x_1)\zeta(x_2,x_3)$ for arbitrary
		$\zeta \in C^\infty_c(U\setminus \Gamma_U)$ (we recall $U = I_2 \times I_3$ and $\Gamma_U = I_2 \times \{0\}$) and
		$\varphi \in D_\#(I_1)$. Due to the compact support of
		$\zeta$ and the $H^1(\Omega\setminus \Gamma)$ regularity of $E_1$
		and $H_1$, we can integrate by parts all terms on the right-hand
		sides of \eqref{eq:Helmholtz:Vweak:E2} that contain $E_1$ and $H_1$
		and derivatives in the direction $e_2$ or $e_3$. We get
		\begin{equation}\label{eq:left-veryWeakHh-right-weakHh}
			\int_{\Omega } \{- E_2 \partial_1^2 \phi_2 - \omega^2 \eps \mu E_2 \phi_2\}
			= \int_{\Omega } \{\partial_2E_1 \partial_1 \phi_2 - i \omega \mu \partial_3H_1 \phi_2 \}
			+ \langle F_{E_2} , \phi_2 \rangle \,.
		\end{equation}
		Since $\zeta \in C^\infty_c(U\setminus \Gamma)$ was arbitrary, and since the functions in the subsequent formula are all of class $L^2(\Omega)$, we obtain the desired
		equations pointwise: For almost every $(x_2,x_3) \in U$, there holds
		\begin{align*}
			\int_{I_1 }
			& \{- E_2(\cdot, x_2,x_3) \partial_1^2 \varphi
			- \omega^2 \eps \mu E_2(\cdot,x_2,x_3) \varphi\}
			\\
			= 
			& \int_{I_1 } \{\partial_2E_1(\cdot,x_2,x_3) \partial_1 \varphi
			- i \omega \mu \partial_3 H_1(\cdot,x_2,x_3) \varphi \}
			\\
			& + 
			\int_{I_1}\left\{(f_h)_3(\cdot,x_2,x_3) \partial_1 \varphi
			+ i \omega \mu (f_e)_2(\cdot,x_2,x_3) \varphi\right\}\,.
		\end{align*}
		The Weyl-Lemma implies, for
		a.e.~$(x_2,x_3) \in U$, there holds
		$E_2(\cdot, x_2,x_3) \in H^1_\#(I_1)$.	
		This can be also obtain from the fact that 	$E_2(\cdot, x_2,x_3)$ solves an ordinary differential equation. Thus, there exists $C>0$ (independent of $x_1$ and $x_2$), such that
			\begin{align*}
				&\| E_2( \cdot ,x_2, x_3) \|_{H^1(I_1) } \leq C \left( \|(f_h)_3(\cdot, x_2, x_3) \|_{L^2(I_1)} + \|(f_e)_2(\cdot, x_2, x_3) \|_{L^2(I_1)} \right. \\
				&\qquad\qquad\qquad \left.+ \| \partial_2 E_1(\cdot, x_2, x_3) \|_{L^2(I_1)} + \|\partial_3 H_1(\cdot, x_2, x_3) \|_{L^2(I_1)}\right)\,.
			\end{align*} 
		Integrating this inequality (or, more precisely, the squared inequality) 
		over $I_2 \times I_3^\pm$ yields, by $L^2(U)$-boundedness
		of the right-hand side:
			\begin{align*}
				\int_{I_2 \times I_3^\pm } \| E_2( \cdot ,x_2, x_3) \|^2_{H^1(I_1)} 
				\, dx_2\, dx_3 \leq C &\left( \|(f_h)_3\|^2_{L^2(\Omega^\pm)}+ \|(f_e)_2 \|^2_{L^2(\Omega^\pm)} \right. \\
				&\quad
				\left. + \|E_1 \|^2_{H^1(\Omega^\pm)} + \|H_1\|^2_{H^1(\Omega^\pm)} \right)\,,
			\end{align*}
		which implies $E_2 \in L^2(U, H^1_\#(I_1))$. We can therefore integrate the first term of the left-hand side of
		\eqref{eq:left-veryWeakHh-right-weakHh} by parts and obtain
		\eqref{eq:Helmholtz:weak:E2} for $\phi_2$ of the form
		$\phi_2(x_1,x_2,x_3) = \varphi(x_1)\zeta(x_2,x_3)$ for
		$\zeta \in C^\infty_c(U\setminus \Gamma_U)$,
		$\varphi \in D_\#(I_1)$. By the density of the span of
		such functions in $W$, \eqref{eq:Helmholtz:weak:E2} holds for arbitrary
		$\phi_2 \in W$.
		
		For the equations of $H_2, E_3 , H_3$, we can use the same argument.
		Thus, $(E,H) \in X_H \times Y_H$ is a weak solution of the
		Helmholtz-type problem \eqref{eq:Helmholtz:weak}.
	\end{proof}

	\begin{lemma}[Weak implies very weak]
		\label{lem:weak-Helmholtz-implies-VweakHelmholtz}
		Let $(f_h, f_e) \in L^2(\Omega, \C^3)^2$ and
		$(E, H)\in X_H \times Y_H$ be a weak solution to the Helmholtz-type
		system, i.e., a solution to \eqref{eq:Helmholtz:weak}. Then, $(E,H)$
		is a very weak solution to the Helmholtz-type system, i.e., a
		solution to \eqref{eq:Helmholtz:Vweak}.
	\end{lemma}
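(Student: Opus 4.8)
The two solution concepts differ only through integration by parts, and all test-functions admitted in Definition~\ref{def:Helmholtz:Vweak} are smooth and already lie in the weak test-spaces: one has $D_\#(\Omega\setminus\Gamma)\subset H^1_{0,\#}(\Omega\setminus\Gamma)$, the set $\{\psi_1\in D_\#(\overline{\Omega})\mid\partial_3\psi_1\in D_\#(\Omega)\}\subset H^1_\#(\Omega)$, and both $D_\#(\Omega)$ and $D_\#(\overline{\Omega};\Gamma)$ are contained in $W=L^2(U,H^1_\#(I_1))$. Hence the plan is simply to insert each admissible smooth test-function into the weak equations \eqref{eq:Helmholtz:weak} and to move the derivatives from the fields onto the test-functions. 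In contrast to Lemma~\ref{lem:Vweak-Helmholtz-implies-weakHelmholtz}, no regularity has to be gained here, since $(E,H)\in X_H\times Y_H$ is assumed; the argument is essentially the reverse of the integrations by parts carried out there, and the only real content is to verify that no boundary or interface terms appear.

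For the first-component equations I would proceed as follows. Choosing $\phi_1\in D_\#(\Omega\setminus\Gamma)$ in \eqref{eq:Helmholtz:weak:E1}, the support of $\phi_1$ stays away from $\delh$ and from $\Gamma$, so $\int_\Omega\nabla E_1\cdot\nabla\phi_1=-\int_\Omega E_1\Delta\phi_1$ without boundary contributions, which is \eqref{eq:Helmholtz:Vweak:E1}. Choosing $\psi_1$ with $\partial_3\psi_1\in D_\#(\Omega)$ in \eqref{eq:Helmholtz:weak:H1} and applying Green's identity, the lateral terms cancel by periodicity, the term on $\delh$ equals $\int_{\delh}H_1\,\partial_3\psi_1$ and vanishes because $\partial_3\psi_1=0$ near $\delh$ (this is exactly where the Neumann condition \eqref{eq:Helmholz:BCH} is hidden), and there is no interface term since $H_1\in H^1_\#(\Omega)$ has no jump while $\psi_1$ is smooth across $\Gamma$. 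This produces \eqref{eq:Helmholtz:Vweak:H1}.

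For the four equations living in $W$ I would insert $\phi_2\in D_\#(\Omega)$, $\phi_3,\psi_2\in D_\#(\overline{\Omega};\Gamma)$ and $\psi_3\in D_\#(\Omega)$. The principal term is integrated by parts in $x_1$, e.g.\ $\int_\Omega\partial_1 E_2\,\partial_1\phi_2=-\int_\Omega E_2\,\partial_1^2\phi_2$, which is justified by $E_2\in W$ and produces no boundary term by $x_1$-periodicity. Each coupling term carries one derivative of $E_1$ or $H_1$ in direction $e_2$ or $e_3$; since $E_1\in H^1_{0,\#}(\Omega\setminus\Gamma)$ and $H_1\in H^1_\#(\Omega)$, these derivatives can be shifted onto the test-functions by integration by parts in $x_2$, respectively $x_3$.

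The delicate point, and the only place where the precise design of the spaces is used, is the vanishing of the boundary and interface terms generated by the integrations by parts in $x_3$; the ones in $x_2$ never contribute, by periodicity. Here the pairing of fields and test-functions is arranged so that exactly one of two mechanisms applies. In the equations \eqref{eq:Helmholtz:weak:H2} and \eqref{eq:Helmholtz:weak:E3}, where the field differentiated in $x_3$ is $E_1$, the test-function ($\psi_2$, respectively $\phi_3$) may jump across $\Gamma$ and need not vanish at $\delh$, but both the boundary term on $\delh$ and the interface term on $\Gamma$ vanish because $E_1=0$ on $\Gamma\cup\delh$, as encoded in $E_1\in H^1_{0,\#}(\Omega\setminus\Gamma)$. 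In the equations \eqref{eq:Helmholtz:weak:E2} and \eqref{eq:Helmholtz:weak:H3}, where the field differentiated in $x_3$ is $H_1$, the test-function ($\phi_2$, respectively $\psi_3$) is taken in $D_\#(\Omega)$, hence vanishes near $\delh$ and is continuous across $\Gamma$, so that together with $\llbracket H_1\rrbracket_\Gamma=0$ all boundary and interface contributions again disappear. Carrying out these routine computations term by term would transform \eqref{eq:Helmholtz:weak:E2}--\eqref{eq:Helmholtz:weak:H3} into \eqref{eq:Helmholtz:Vweak:E2}--\eqref{eq:Helmholtz:Vweak:H3} and complete the proof.
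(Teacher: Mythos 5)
Your proof is correct and follows essentially the same route as the paper's own argument: both observe that the weak and very weak formulations differ only by integrations by parts and then verify, term by term, that no boundary or interface contributions arise, using periodicity, the support properties of the admissible test-functions, the vanishing of $E_1$ on $\Gamma\cup\delh$ encoded in $E_1\in H^1_{0,\#}(\Omega\setminus\Gamma)$, and the continuity of $H_1$ across $\Gamma$ encoded in $H_1\in H^1_{\#}(\Omega)$. Your write-up is in fact somewhat more explicit than the paper's about which of these mechanisms kills which term.
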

	
	\begin{proof}
		The solutions of \eqref{eq:Helmholtz:Vweak} require less regularity
		than the solutions of \eqref{eq:Helmholtz:weak} and the equations
		itself differ only by some integration by parts. Thus, let
		$(E, H)\in X_H \times Y_H$ be a weak solution to the Helmholtz-type
		system, i.e., a solution to \eqref{eq:Helmholtz:weak}. We have to
		show that all integrations by parts can be performed without
		boundary terms.
		
		The property $E_1 \in H^1_{0,\#}(\Omega \setminus \Gamma)$ allows to
		integrate by parts without boundary terms; we exploit the
		periodicity in $e_1$ and $e_2$ direction and the vanishing trace of
		$E_1$ at $\Gamma$ and $\delh$.
		
		In \eqref{eq:Helmholtz:weak:H1}, the integration by parts for the
		left-hand side is not producing boundary terms since
		$\partial_3 \psi_1 \in D_\#(\Omega)$ and, thus, vanishes at $\delh$
		and is periodic in the $e_1$ and $e_2$ directions.
		For the left-hand sides of
		\eqref{eq:Helmholtz:weak:E2}--\eqref{eq:Helmholtz:weak:H3}, there do
		not arise boundary terms due to the periodicity of the functions in
		the $e_1$-direction.
		
		For the right-hand sides of \eqref{eq:Helmholtz:Vweak:E2} and
		\eqref{eq:Helmholtz:Vweak:H3}, there is no boundary term for $H_1$
		because of $\phi_2 \in D_\#(\Omega)$ and $\psi_3 \in D_\#(\Omega)$.
		For the right-hand sides of \eqref{eq:Helmholtz:Vweak:H2} and
		\eqref{eq:Helmholtz:Vweak:E3}, the boundary term for the $H_1$-term
		vanishes due to the periodicity in the $e_2$-direction of the
		test-functions and $H_1$.
	\end{proof}

	\section{Equivalence of the Maxwell and the
		Helmholtz-type system}
	\label{sec.equivalence}
	
	\begin{lemma}[Maxwell implies Helmholtz]
		\label{lem:Maxwell-implies-Helmholtz}
		We consider right-hand sides
		$(f_h, f_e) \in Y_0^\perp \times X_0^\perp$. Let
		$(E, H)\in X \times Y$ be a solution to the Maxwell system, see
		Definition~\ref{def:Maxwell:weak}. Then, $(E,H)$ is a weak solution
		of the Helmholtz-type system, see
		Definition~\ref{def:Helmholtz:weak}. In particular,
		$(E, H) \in X_H\times Y_H$.
	\end{lemma}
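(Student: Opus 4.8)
The plan is to show that $(E,H)$ is a \emph{very weak} solution of the Helmholtz-type system in the sense of Definition~\ref{def:Helmholtz:Vweak} and then to invoke Lemma~\ref{lem:Vweak-Helmholtz-implies-weakHelmholtz}: since $(E,H)\in X\times Y$ by hypothesis, that lemma upgrades a very weak solution to a weak solution and yields the regularity $(E,H)\in X_H\times Y_H$. As preparation I would record two facts. First, testing \eqref{eq:Maxwell:weak:E} and \eqref{eq:Maxwell:weak:H} with smooth compactly supported fields gives $\curl E = i\omega\mu H + f_h \in L^2(\Omega)$ and $\curl H = -i\omega\eps E + f_e \in L^2(\Omega\setminus\Gamma)$, as in Lemma~\ref{lem:Regularity-Maxwell}. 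Second, by the same lemma and the assumption $(f_h,f_e)\in Y_0^\perp\times X_0^\perp$, the solution is divergence-free in the sense $(E,H)\in X_0^\perp\times Y_0^\perp$; this orthogonality is what eliminates the gradient contributions below.

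For the first-component equations \eqref{eq:Helmholtz:Vweak:E1} and \eqref{eq:Helmholtz:Vweak:H1} I would mimic the formal computation of Section~\ref{ssec.Helmholtz-type-system}, but keep every derivative on the smooth test function. To obtain \eqref{eq:Helmholtz:Vweak:E1} for $\phi_1\in D_\#(\Omega\setminus\Gamma)$, insert $\psi=\curl(e_1\phi_1)\in Y$ into \eqref{eq:Maxwell:weak:E} and use $\curl\curl(e_1\phi_1)=\nabla(\partial_1\phi_1)-e_1\Delta\phi_1$. The term $\int_\Omega E\cdot\nabla(\partial_1\phi_1)$ vanishes because $\nabla(\partial_1\phi_1)\in X_0$ and $E\in X_0^\perp$, leaving $-\int_\Omega E_1\Delta\phi_1$ on the left. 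On the right one rewrites $\int_\Omega i\omega\mu H\cdot\curl(e_1\phi_1)$ by testing \eqref{eq:Maxwell:weak:H} with $\phi=e_1\phi_1\in X$, which produces the terms $\omega^2\eps\mu\,E_1\phi_1$ and $i\omega\mu(f_e)_1\phi_1$; the remaining $f_h$-term reassembles $\langle F_{E_1},\phi_1\rangle$ from \eqref{eq:def:F}. Equation \eqref{eq:Helmholtz:Vweak:H1} then follows by the symmetric computation with $E\leftrightarrow H$, $f_e\leftrightarrow f_h$ and $\eps\leftrightarrow\mu$, now using $\psi_1$ of Neumann type ($\partial_3\psi_1\in D_\#(\Omega)$), $\phi=\curl(e_1\psi_1)\in X$ and $\psi=e_1\psi_1\in Y$, together with $H\in Y_0^\perp$.

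For the four transverse equations \eqref{eq:Helmholtz:Vweak:E2}--\eqref{eq:Helmholtz:Vweak:H3} I would reproduce the elimination carried out in Section~\ref{ssec.Helmholtz-type-system}, now purely through test functions. For \eqref{eq:Helmholtz:Vweak:E2} with $\phi_2\in D_\#(\Omega)$, test \eqref{eq:Maxwell:weak:E} with $\psi=e_3\,\partial_1\phi_2\in Y$ to get $\int_\Omega(E_1\partial_2\partial_1\phi_2-E_2\partial_1^2\phi_2)=\int_\Omega(i\omega\mu H_3+(f_h)_3)\partial_1\phi_2$, and test \eqref{eq:Maxwell:weak:H} with $\phi=e_2\phi_2\in X$ to get $\int_\Omega(-H_1\partial_3\phi_2+H_3\partial_1\phi_2)=\int_\Omega(-i\omega\eps E_2+(f_e)_2)\phi_2$. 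Multiplying the second identity by $i\omega\mu$ and substituting the $H_3$-term from the first yields exactly \eqref{eq:Helmholtz:Vweak:E2} with the functional $\langle F_{E_2},\phi_2\rangle$. The three remaining equations are obtained by the same pairing of one $E$-test and one $H$-test, choosing the test vectors $e_i$ times the scalar $\phi_j$ or $\psi_j$ and the appropriate first derivative, following the sign pattern of \eqref{eq:Helmholtz:strong:H2}--\eqref{eq:Helmholtz:strong:H3}.

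The main point to be careful about, and the step I expect to be the real work, is the admissibility of the chosen test fields and the absence of boundary and jump terms. One must check that $\curl(e_1\phi_1)$, $e_1\phi_1$, $e_3\partial_1\phi_2$, $e_2\phi_2$ and their analogues indeed lie in $X$ or $Y$ as claimed. The decisive observations are that the first component of each $Y$-test field has no jump across $\Gamma$ (it is either $0$ or smooth), that the relevant second components vanish near $\delh$ precisely because the scalars live in $D_\#(\Omega)$, and that the boundary contributions on $\delh$ arising when verifying $e_1\psi_1\in Y$ cancel against the vanishing tangential trace of the $X$-test functions. Together with the divergence-free property, these facts make all integrations by parts boundary-free and close the argument.
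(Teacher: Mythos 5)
Your proposal is correct and follows essentially the same route as the paper's proof: you establish the very weak formulation \eqref{eq:Helmholtz:Vweak} by pairing the two Maxwell weak forms with the test fields $e_1\phi_1$, $\curl(e_1\phi_1)$, $e_2\phi_2$, $e_3\partial_1\phi_2$, etc., use the identity $\curl\curl=\nabla\div-\Delta$ together with the orthogonality $(E,H)\in X_0^\perp\times Y_0^\perp$ (from Lemma~\ref{lem:Regularity-Maxwell}) to eliminate the gradient term, and then upgrade to a weak solution via Lemma~\ref{lem:Vweak-Helmholtz-implies-weakHelmholtz} using the hypothesis $(E,H)\in X\times Y$. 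This is exactly the paper's argument, including the admissibility checks for the test functions.
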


	\begin{proof}
		Let $(E, H) \in L^2(\Omega, \C^3) \times L^2(\Omega, \C^3)$ be a
		solution to the Maxwell system according to
		Definition~\ref{def:Maxwell:weak}, i.e., system
		\eqref{eq:Maxwell:weak} is satisfied. Our aim is to show that
		$(E,H)$ is a very weak solution to the Helmholtz-type system, i.e.,
		a solution to \eqref{eq:Helmholtz:Vweak}. Lemma~\ref
		{lem:Vweak-Helmholtz-implies-weakHelmholtz} then yields that $(E,H)$
		is also a weak solution to the Helmholtz-type system.
		
		\smallskip \textbf{Equation for $E_1$}: Let
		$\varphi_1 \in D(\Omega\setminus \Gamma)$ be arbitrary. We choose
		$\phi = i \omega \mu (\varphi_1,0,0)$ in \eqref{eq:Maxwell:weak:H},
		noting that the condition $\phi \in X$ is satisfied since $\phi$ is
		smooth and its first and second component vanish at $\delh$ and its
		first component vanishes on $\Gamma$. We obtain
		\begin{align*}
			\int_\Omega i \omega \mu H \cdot \curl (\varphi_1,0,0)
			= \int_\Omega (\omega^2 \eps \mu E + i \omega \mu f_e) \cdot (\varphi_1,0,0) \,.
		\end{align*}
		We choose
		$\psi= \curl (\varphi_1,0,0) = (0, \partial_3 \varphi_1, -
		\partial_2 \varphi_1)$ in \eqref{eq:Maxwell:weak:E}, noting that
		$\psi \in Y$ is satisfied since the first component of $\psi$ does
		not jump along $\Gamma$. We find
		\begin{align*}
			\int_\Omega E \cdot \curl \curl (\varphi_1,0,0)
			= \int_\Omega (i \omega \mu H +f_h) \cdot \curl (\varphi_1,0,0) \,.
		\end{align*}
		Summing up the two equations yields
		\begin{equation}\label{eq:Helmholtz:Vweak:E1-curl-curl}
			\int_\Omega \left\{E \cdot \curl \curl (e_1\varphi_1) - \omega^2 \eps \mu E_1 \varphi_1\right\}
			= \int_\Omega \left\{i \omega \mu (f_e)_1 \varphi_1 + f_h \cdot \curl (e_1 \varphi_1)\right\} \,. 
		\end{equation}
		Since we have divergence-free data, Lemma \ref{lem:Regularity-Maxwell} implies that $E \in X_0^\perp$. Therefore, taking $g = \nabla \partial_1 \varphi_1\in X_0$ in \eqref{eq:divE}, we obtain
		\begin{align}\label{eq:divE-tested} 
			\int_\Omega E \cdot \nabla \div (e_1 \varphi_1) = 0\,.
		\end{align}
		Using the identity $\curl \curl = - \Delta + \nabla \div$ and
		subtracting \eqref{eq:divE-tested} from
		\eqref{eq:Helmholtz:Vweak:E1-curl-curl} gives
		\begin{align*}
			\int_\Omega \left\{E \cdot \Delta (e_1\varphi_1) - \omega^2 \eps \mu E_1 \varphi_1\right\}
			= \int_\Omega \left\{i \omega \mu (f_e)_1 \varphi_1 + f_h \cdot \curl (e_1 \varphi_1)\right\} \,.
		\end{align*}
		This provides \eqref{eq:Helmholtz:Vweak:E1} for $\phi_1=\varphi_1$.
		
		\smallskip \textbf{Equation for $H_1$}: The derivation of
		\eqref{eq:Helmholtz:Vweak:H1} is very similar to the derivation of
		\eqref{eq:Helmholtz:Vweak:E1}. Let
		$\eta_1 \in D_\#(\overline{\Omega})$ with
		$\partial_3 \eta_1 \in D_\#(\Omega)$ be arbitrary. We choose
		$\psi =-i \omega \eps (\eta_1,0,0) \in Y$ in
		\eqref{eq:Maxwell:weak:E}, where $\psi \in Y$ since $\eta_1$ does
		not jump across $\Gamma$. We choose
		$\phi =\curl (e_1 \eta_1) = (0,\partial_3 \eta_1, -\partial_2
		\eta_1) \in X$ in \eqref{eq:Maxwell:weak:H}, where $\phi \in X$
		since $\partial_3 \eta_1 \in D_\#(\Omega)$ and, thus, the second
		component of $\phi$ vanishes at $\delh$. Moreover, we choose
		$g = -\nabla \partial_1 \eta_1 \in Y_0$ in \eqref{eq:divH}. Adding
		the resulting three equations and using the identity
		$\curl \curl = - \Delta + \nabla \div$ gives
		\eqref{eq:Helmholtz:Vweak:H1} for $\psi_1 = \eta_1$.
		
		\smallskip \textbf{Equation for $E_2$:} Let
		$\varphi_2 \in D_\#(\Omega)$ be arbitrary. We choose
		$\phi = i \omega \mu (0,\varphi_2,0) \in X$ in
		\eqref{eq:Maxwell:weak:H} and obtain:
		\begin{align*}
			i \omega \mu \int_\Omega \left\{- H_1 \partial_3 \varphi_2
			+ H_3 \partial_1 \varphi_2 \right\}
			= \int_\Omega \left\{\omega^2 \eps \mu E_2 \varphi_2 + i \omega \mu (f_e)_2 \varphi_2\right\}\,.
		\end{align*}
		Choosing $\psi = (0,0, \partial_1 \varphi_2) \in Y$ in
		\eqref{eq:Maxwell:weak:H} gives: 
		\begin{align*}
			\int_\Omega \left\{E_1 \partial_2 \partial_1 \varphi_2
			- E_2 \partial_1\partial_1 \varphi_2\right\}
			= \int_\Omega \left\{i \omega \mu H_3 \partial_1 \varphi_2 + (f_h)_3 \partial_1 \varphi_2\right\}\,.
		\end{align*}
		Summing these equations gives \eqref{eq:Helmholtz:Vweak:E2} for
		$\phi_2 = \varphi_2$.
		
		\smallskip \textbf{Equations for $H_2$, $E_3$, $H_3$:} These
		equations are derived as the equation for $E_2$. We only sketch the
		necessary steps.
		
		\smallskip \textit{Equation for $H_2$:} Let
		$\eta_2 \in D_\#(\overline{\Omega}; \Gamma)$ be arbitrary. We choose
		$\psi= i \omega \eps (0,\eta_2,0) \in Y$ in
		\eqref{eq:Maxwell:weak:E} and $\phi = (0,0,\partial_1 \eta_2) \in X$
		in \eqref{eq:Maxwell:weak:H}. Summing the resulting equations gives
		\eqref{eq:Helmholtz:Vweak:H2} for $\psi_2 = \eta_2$.
		
		\textit{Equation for $E_3$:} Let
		$\varphi_3 \in D_\#(\overline{\Omega}; \Gamma)$ be arbitrary. We
		choose $\phi =i \omega \mu (0,0,\varphi_3) \in X$ in
		\eqref{eq:Maxwell:weak:H} and
		$\psi=(0,\partial_1 \varphi_3,0) \in Y$ in
		\eqref{eq:Maxwell:weak:E}. Summing the resulting equations gives
		\eqref{eq:Helmholtz:Vweak:E3} for $\phi_3 = \varphi_3$.
		
		\textit{Equation for $H_3$:} Let $\eta_3 \in D_\#(\Omega)$ be
		arbitrary. We choose $\psi = i \omega \eps (0,0, \eta_3) \in Y$ in
		\eqref{eq:Maxwell:weak:E} and $\phi = (0,\partial_1 \eta_3,0) \in X$
		in \eqref{eq:Maxwell:weak:H}. Summing the resulting equations gives
		\eqref{eq:Helmholtz:Vweak:H3} for $\psi_3 = \eta_3$.
	\end{proof}
	
	With the above result, we know that Maxwell solutions are also
	Helmholtz solutions. We next want to show the opposite implication for the case $\omega^2 \notin \sigma(l_1)$:
	Weak solution of the Helmholtz-type system are also a solution to the
	Maxwell system.
	
	In this derivation, we use spaces of smooth functions on the cross
	section $U = I_2 \times I_3$ of $\Omega$. These spaces corresponds to
	the subsets of $D_\#(\overline{\Omega})$, $D_\#(\Omega)$,
	$D_\#(\Omega\setminus \Gamma)$ and $D_\#(\overline{\Omega}; \Gamma)$
	of functions that are independent of $x_1$. In the spirit of previous
	constructions, every function $u$ on $U$ is identified with its
	periodic extension $\tilde{u}$ on $\R \times I_3$, defined as
	\begin{align*}
		\tilde u(k_1 l_2 + x)=u(x) \qquad
		\forall k_1\in \Z\,,\ x \in I_2 \times I_3\,,
	\end{align*}
	In the subsequent table, the second column indicates whether or not it
	is demanded that functions vanish in a neighborhood of the horizontal
	boundaries $\delhU$. The last column indicates whether or not the
	functions can have a jump across $\Gamma_U$, or if it is demanded that
	the function vanishes in a neighborhood of $\Gamma_U$.
	
	\begin{subequations}
		\renewcommand{\arraystretch}{1.4} 
		\begin{align*}
			\hspace{-0.1cm}
			\begin{array}{l|c|c}
				\text{Space}
				& \!\delhU\!
				& \!\Gamma_U\!
				\\
				\hline	
				D_\#(\overline{U})
				\coloneqq \left\{u \in C^\infty\left(\overline{U}\right) \,\middle|\, \tilde{u}
				\in C^\infty\left(\overline{U_\#}\right) \right\}
				& \neq 0 &\textrm{no j.}
				\\
				D_\#(U)
				\coloneqq\left\{ u \in C^\infty(\overline{U}) \,\middle|\,
				\supp(\tilde u)\cap \bar U \subset U_\# \text{ is compact} \right\}
				& = 0 &\textrm{no j.}
				\\
				D_\#(U \setminus \Gamma_U)
				\coloneqq\left\{ u \in C^\infty(\overline{U}) \,\middle|\,
				\supp(\tilde u)\cap \bar U\subset U_\# \setminus \Gamma_{U,\#}
				\text{ is compact} \right\} \!
				& = 0 &=0
				\\
				D_\#(\overline{U}; \Gamma_U)
				\coloneqq \left\{u \colon \overline{U} \to \C \,\middle|\, 
				\tilde{u}|_{\R\times I_3^\pm}
				\in C^\infty\left(\R \times \overline{I_3^\pm} \right) \right\} & \neq 0
				&\textrm{jump}
			\end{array}
		\end{align*}
	\end{subequations}

	\begin{lemma}[Helmholtz implies Maxwell]
		\label{lem:Helmholtz-implies-Maxwell}
 We consider right-hand sides
		$(f_h, f_e) \in Y_0^\perp \times X_0^\perp$ and
		$\omega^2 \notin \sigma(l_1)$. Let
		$(E, H)\in X_H \times Y_H$ be a solution to the Helmholtz-type system, see
		Definition~\ref{def:Helmholtz:weak}. Then, $(E,H)$ is a weak solution
		of the Maxwell system, see
		Definition~\ref{def:Maxwell:weak}.
	\end{lemma}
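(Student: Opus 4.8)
The plan is to verify the two weak Maxwell identities \eqref{eq:Maxwell:weak:E} and \eqref{eq:Maxwell:weak:H} directly, for arbitrary test functions $\psi \in Y$ and $\phi \in X$, starting from the weak Helmholtz solution $(E,H) \in X_H \times Y_H$. The argument is the rigorous counterpart of the formal computation ``From the Helmholtz-type system to the Maxwell system'' in Section~\ref{ssec.Helmholtz-type-system}: there, every scalar Maxwell relation was produced as a fixed linear combination of first- and second-order $x_1$-derivatives of the six Helmholtz equations, the combination always collapsing to the shape $(\partial_1^2 + \eps\mu\omega^2)\,(\text{Maxwell residual}) = 0$. The single analytic ingredient is the invertibility of the one-dimensional operator $L_\omega \coloneqq \partial_1^2 + \eps\mu\omega^2$ acting on $x_1$-periodic functions on $I_1$. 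Its periodic eigenvalues are $\eps\mu\omega^2 - (2\pi k_1/l_1)^2$ with $k_1 \in \N_0$, so $L_\omega$ is boundedly invertible precisely when $\omega^2 \notin \sigma(l_1)$; this is exactly the standing hypothesis, and Remark~\ref{rem:on-equivalence} shows that the implication genuinely breaks down at the excluded frequencies.

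First I would pass, via Lemma~\ref{lem:weak-Helmholtz-implies-VweakHelmholtz}, to the very weak form \eqref{eq:Helmholtz:Vweak}, whose structure (all derivatives on the test function) matches the curl-terms appearing in \eqref{eq:Maxwell:weak}; I would also record, from $(f_h, f_e) \in Y_0^\perp \times X_0^\perp$, the orthogonalities that play the role of the divergence conditions \eqref{eq:strongdivFree} used in the formal derivation of the first-component equations. Fixing $\psi \in Y$ (first for smooth $\psi$, the general case then following by density in the $H(\curl)$-topology), I would construct the Helmholtz test functions by applying $L_\omega^{-1}$ in the $x_1$-variable to the components of $\psi$, letting, wherever the formal combination differentiates in $x_2$ or $x_3$, that derivative fall on the test function rather than on the solution. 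Inserting these into the very weak Helmholtz equations, forming for each scalar component of $\curl E = i\omega\mu H + f_h$ the linear combination prescribed in Section~\ref{ssec.Helmholtz-type-system}, and using that $L_\omega^{-1}$ is self-adjoint under the $x_1$-integral and commutes with $\partial_1$, the factor $L_\omega$ cancels and the identity collapses to $\int_{\Omega\setminus\Gamma} E \cdot \curl\psi = i\omega\mu \int_\Omega H \cdot \psi + \int_\Omega f_h \cdot \psi$, with the data contributions exactly those packaged in the functionals \eqref{eq:def:F}; this gives \eqref{eq:Maxwell:weak:E}. The companion identity \eqref{eq:Maxwell:weak:H} follows symmetrically, testing with $\phi \in X$. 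Once \eqref{eq:Maxwell:weak} holds, Lemma~\ref{lem:Regularity-Maxwell} upgrades the solution to $(E,H) \in X \times Y$ automatically.

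The step I expect to be delicate is the admissibility and the interface/boundary bookkeeping of the constructed test functions. Since $E_2, E_3, H_2, H_3$ lie only in $W = L^2(U, H^1_\#(I_1))$, every integration by parts has to be carried out in $x_1$, where $L_\omega$ lives, or must fall on the smooth factor of $\psi, \phi$; the construction through $L_\omega^{-1}$ is arranged precisely so that this is the case. One must then check, component by component, that $L_\omega^{-1}$ (which acts only in $x_1$) maps the tangential and interface data carried by $\psi \in Y$ and $\phi \in X$ into the correct Helmholtz test spaces, and, above all, that no spurious boundary term on $\delh$ and no spurious jump term across $\Gamma$ survives the combination. Here the design of the spaces is decisive: $E_1 \in H^1_{0,\#}(\Omega \setminus \Gamma)$ already encodes $E_1|_\Gamma = 0$ and $E_1|_{\delh} = 0$, while $H_1 \in H^1_\#(\Omega)$ together with the natural Neumann condition \eqref{eq:Helmholz:BCH} forces $H_1$ and $\partial_3 H_1$ to have no jump across $\Gamma$ and matches the horizontal boundary contribution to $(f_e)_2$. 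These are exactly the conditions making the recovered pair satisfy $E \in H_\#(\curl, \Omega)$ with $E_1|_\Gamma = 0$ and $E \times e_3 = 0$ on $\delh$, and $\llbracket H_1 \rrbracket_\Gamma = 0$, so that the $Y$- and $X$-tested Maxwell identities hold with no interface terms and the passage from the Helmholtz to the Maxwell system is complete.
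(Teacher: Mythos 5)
Your proposal follows essentially the same route as the paper: pass to the very weak formulation, exploit the invertibility of $\partial_1^2+\eps\mu\omega^2$ on $x_1$-periodic functions (which is exactly where $\omega^2\notin\sigma(l_1)$ enters) to build Helmholtz test functions whose prescribed linear combinations collapse to the Maxwell identities, use the orthogonality $(f_h,f_e)\in Y_0^\perp\times X_0^\perp$ to absorb the source terms, and finish with a density argument in the $H(\curl)$-topology. The only cosmetic difference is that the paper applies the inverse operator to the $x_1$-factor of separated test functions $\varphi(x_1)\zeta(x_2,x_3)$ and then invokes a dedicated density lemma for spans of such products, which makes the admissibility checks you flag as delicate essentially automatic.
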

	
	\begin{proof}
		Let $(E, H) \in X_H \times Y_H$ be a weak solution to the
		Helmholtz-type system, by Lemma
		\ref{lem:weak-Helmholtz-implies-VweakHelmholtz}, $(E, H)$ is also a
		very weak solution of the Helmholtz-type system, see
		Definiton~\ref{def:Helmholtz:Vweak}. In the following, we consider
		test-functions $\phi_i$ and $\psi_i$ for $i \in \{1,2,3\}$, which
		allow a separation of the $x_1$ variable from the $x_2$ and $x_3$
		variables, i.e. functions of the form
		\begin{align*}
			&\phi_i(x_1,x_2,x_3) = \varphi (x_1) \zeta_i(x_2,x_3) \,, \quad
			\psi_i(x_1,x_2,x_3) = \varphi (x_1) \xi_i(x_2,x_3) \quad \forall i \in \{1,2,3\}
		\end{align*}
		for arbitrary $\varphi \in D_\#(I_1)$ and arbitrary
		\begin{align*}
			&\zeta_1 \in D_\#(U\setminus \Gamma_U) \,, \,
			&&\zeta_2 \in D_\#(U) \,, \
			&&\zeta_3 \in D_\#(\overline{U}; \Gamma_U) \,, \\
			&\xi_1 \in \{\xi_1 \in D_\#(\overline{U}) \mid \partial_2 \xi_1 \in D_\#(U) \} \,, \,
			&&\xi_2 \in D_\#(\overline{U}; \Gamma_U) \,, \,
			&&\xi_3 \in D_\#(U) \,.
		\end{align*}
		
		We show that $E$ and $H$ solve \eqref{eq:Maxwell:weak} for
		test-functions of the form $\phi = \phi_i e_i$ and
		$\psi = \psi_i e_i$. By this procedure, we recover Maxwell's equations componentwise. A density argument then yields that
		\eqref{eq:Maxwell:weak} is satisfied for arbitrary $\psi \in Y$ and
		$\phi \in X$.
		
		In the following we consider arbitrary functions
		$\varphi \in D_\#(I_1)$. In the construction of test-functions, we
		use the solution $\tilde{\varphi}\in D_\#(I_1)$ of the equation
		\begin{align}\label{eq:1dHelmholtz-EV}
			- \partial_1^2 \tilde{\varphi} - \omega^2 \eps \mu
			\tilde{\varphi} = \varphi\,.
		\end{align}
		By the condition on the frequency $\omega$, this problem is uniquely
		solvable.

		\smallskip \textbf{Test-functions $\phi_1$}: Let
		$\zeta_1\in D_\#(U\setminus \Gamma_U)$ be arbitrary. We define
		$\tilde{\phi}_1(x_1,x_2,x_3) \coloneqq \tilde\fhi(x_1)
		\zeta_1(x_2,x_3)$ and, thus,
		$\tilde{\phi}_1 \in D_\#(\Omega \setminus \Gamma)$. We choose
		$\phi_1 =i \omega \eps\tilde{\phi}_1$ in
		\eqref{eq:Helmholtz:Vweak:E1}, $\psi_2 = \partial_3 \tilde{\phi}_1$
		in \eqref{eq:Helmholtz:Vweak:H2} and
		$\psi_3 = -\partial_2 \tilde{\phi}_1$ in
		\eqref{eq:Helmholtz:Vweak:H3}, which gives
		\begin{align*}	
			i \omega \eps \int_\Omega \left\{-E_1 \Delta \tilde{\phi}_1
			- \omega^2 \eps \mu E_1 \tilde{\phi}_1 \right\}
			&= \langle F_{E_1}, i \omega \eps \tilde{\phi}_1 \rangle \,,
			\\
			\int\limits_{\Omega } \left\{H_2 \partial_1^2 \partial_3 \tilde{\phi}_1
			+ \omega^2 \eps \mu H_2 \partial_3 \tilde{\phi}_1\right\}
			&= \int\limits_{\Omega } \left\{H_1 \partial_2 \partial_1 \partial_3 \tilde{\phi}_1
			+ i \omega \eps E_1 \partial_3 \partial_3 \tilde{\phi}_1\right\}
			-\langle F_{H_2}, \partial_3 \tilde{\phi}_1 \rangle \,,
			\\
			\int\limits_{\Omega } \left\{H_3 \partial_1^2 \partial_2 \tilde{\phi}_1
			+	\omega^2 \eps \mu H_3 \partial_2 \tilde{\phi}_1 \right\}
			&= \int\limits_{\Omega } \left\{H_1 \partial_3 \partial_1 \partial_2 \tilde{\phi}_1
			- i \omega \eps E_1 \partial_2 \partial_2 \tilde{\phi}_1\right\}
			- \langle F_{H_3}, \partial_2 \tilde{\phi}_1 \rangle \,.
		\end{align*}
		We subtract the third equation from the second equation, add and subtract the expression $i \omega \eps E_1 \del_1^2 \tilde{\phi}_1$ and replace
		the term $E_1 \Delta \tilde{\phi}_1$ with the first
		equation to obtain
		\begin{align}\label{eq:123456}
			\begin{aligned}
				&\int_\Omega \left\{H_2 \partial_3 (\partial_1^2\tilde{\phi}_1 +
				\omega^2\eps \mu \tilde{\phi}_1) - H_3\partial_2
				(\partial_1^2 \tilde{\phi}_1 + \omega^2\eps \mu
				\tilde{\phi}_1)\right\}
				\\
				&= \int_\Omega \left\{ - i \omega \eps E_1 (\partial_1^2
				\tilde{\phi}_1 + \omega^2\eps \mu \tilde{\phi}_1) \right\} 
				-\langle F_{H_2}, \partial_3 \tilde{\phi}_1 \rangle + \langle F_{H_3}, \partial_2 \tilde{\phi}_1 \rangle - \langle F_{E_1}, i \omega \eps \tilde{\phi}_1 \rangle
				\,.
			\end{aligned}
		\end{align}
		In order to simplify the source term, we exploit that the function satisfies $f_e\in X_0^\perp$, and, thus, is orthogonal to gradients (the product with $\nabla \partial_1 \tilde{\phi}_1$ vanishes):
		\begin{align}
			\begin{aligned}\label{eq:1234568}
			&-\langle F_{H_2}, \partial_3 \tilde{\phi}_1 \rangle + \langle F_{H_3}, \partial_2 \tilde{\phi}_1 \rangle - \langle F_{E_1}, i \omega \eps \tilde{\phi}_1 \rangle 
			\\
			&\quad =
			\int_\Omega \big\{-(f_e)_3 \partial_1 \partial_3 \tilde{\phi}_1 + i \omega \eps (f_h)_2 \partial_3 \tilde{\phi}_1
			-(f_e)_2 \partial_1 \partial_2 \tilde{\phi}_1 - i \omega \eps (f_h)_3 \partial_2 \tilde{\phi}_1
			\\
			&\qquad \qquad 
			+\omega^2 \eps \mu (f_e)_1 \tilde{\phi}_1
			- i \omega \eps f_h \cdot \curl (e_1 \tilde{\phi}_1) \big\}
			\\
			&\quad =
			\int_\Omega \big\{- f_e \cdot \nabla \partial_1 \tilde{\phi}_1 + (f_e)_1 \partial_1^2 \tilde{\phi}_1
			+ \omega^2 \eps \mu (f_e)_1 \tilde{\phi}_1 \big\}
			\\
			&\quad =
			\int_\Omega \big\{(f_e)_1 \partial_1^2 \tilde{\phi}_1
			+ \omega^2 \eps \mu (f_e)_1 \tilde{\phi}_1 \big\}\,.
			\end{aligned}
		\end{align}
		Replacing the source terms in \eqref{eq:123456} with \eqref{eq:1234568}, we obtain
		\begin{align}\label{eq:proof:1}
	\begin{aligned}
		&\int_\Omega \left\{H_2 \partial_3 (\partial_1^2\tilde{\phi}_1 +
		\omega^2\eps \mu \tilde{\phi}_1) - H_3\partial_2
		(\partial_1^2 \tilde{\phi}_1 + \omega^2\eps \mu
		\tilde{\phi}_1)\right\}
		\\
		&\quad = \int_\Omega \left\{ - i \omega \eps E_1 (\partial_1^2
		\tilde{\phi}_1 + \omega^2\eps \mu \tilde{\phi}_1) + (f_e)_1
		(\partial_1^2 \tilde{\phi}_1 + \omega^2\eps \mu
		\tilde{\phi}_1) \right\}\,,
	\end{aligned}
\end{align}
		which is a weak formulation of 
				\begin{equation*}
		(\partial_1^2 + \omega^2 \eps \mu) (\curl H)_1 
		= (\partial_1^2 + \omega^2 \eps \mu) (-i \omega \eps E_1 + (f_e)_1)\,.
		\end{equation*} 
		Because of \eqref{eq:1dHelmholtz-EV}, the function
		$\phi_1(x_1,x_2,x_3)\coloneqq \varphi(x_1)\zeta_1(x_2,x_3)$
		satisfies
		$\phi_1 =- \partial_1^2 \tilde{\phi}_1 - \omega^2 \eps \mu
		\tilde{\phi}_1$. This allows to simplify \eqref{eq:proof:1} to
		\begin{align*}
			\int_\Omega \left\{H_2 \partial_3 {\phi}_1 - H_3\partial_2 {\phi}_1
			+
			i \omega \eps E_1 {\phi}_1\right\} = \int_\Omega (f_e)_1 {\phi}_1 \,.
		\end{align*}
		This is relation \eqref{eq:Maxwell:weak:H} for $\phi = e_1
		\phi_1$. We recall that we have chosen
		$\phi_1(x_1,x_2,x_3) = \varphi(x_1)\zeta_1(x_2,x_3)$ with arbitrary
		$\varphi \in D_\#(I_1)$ and $\zeta_1\in D_\#(U\setminus \Gamma_U)$.

		\smallskip \textbf{Test-functions $\psi_1$}: We proceed as with
		$\phi_1$. Let $\xi_1 \in D_\#(\overline{U})$ with
		$\partial_2 \xi_1 \in D_\#(U)$. We define
		$\tilde{\psi}_1(x_1,x_2,x_3) \coloneqq \tilde{\varphi}(x_1)
		\xi_1(x_2, x_3)$ such that
		$\tilde{\psi}_1 \in D_\#(\overline{\Omega})$ and
		$\partial_3 \tilde{\psi}(x_1,x_2,x_3) = \tilde{\varphi}(x_1)
		\partial_2 \xi_1(x_2, x_3)$ and
		$\partial_3 \tilde{\psi} \in D_\#(\Omega)$. We choose
		$\psi_1 = -i \omega \mu \tilde{\psi}_1$ in
		\eqref{eq:Helmholtz:Vweak:H1},
		$\phi_2 = \partial_3 \tilde{\psi}_1 \in D_\#(\Omega)$ in
		\eqref{eq:Helmholtz:Vweak:E2} and
		$\phi_3 = -\partial_2 \tilde{\psi}_1$ in
		\eqref{eq:Helmholtz:Vweak:E3}. Adding the resulting equations and exploiting that the function $f_h$ is
		orthogonal to gradients shows
		\eqref{eq:Maxwell:weak:E} for $\psi = e_1 \psi_1$ with
		$\psi_1(x_1,x_2,x_3) = \varphi(x_1)\xi_1(x_2,x_3)$.
		
		\smallskip\textbf{Test-functions $\phi_2$}: For
		$\zeta_2 \in D_\#(U)$, we use
		$\tilde{\phi}_2(x_1,x_2,x_3) =
		\tilde{\varphi}(x_1)\zeta_2(x_2,x_3)$, which satisfies
		$\tilde{\phi}_2 \in D_\#(\Omega)$. We can choose
		$\phi_2 = i \omega \eps \tilde{\phi}_2$ in
		\eqref{eq:Helmholtz:Vweak:E2} and
		$\psi_3 = \partial_1 \tilde{\phi}_2 \in D_\#(\Omega)$ in
		\eqref{eq:Helmholtz:Vweak:H3}. Adding the resulting equations yields
		\begin{align}\label{eq:proof:2}
			\begin{aligned}
				\int_{\Omega } \left\{- H_1 \partial_3 (-\partial_1
				\partial_1 \tilde{\phi}_2 - \omega^2 \eps
				\mu\tilde{\phi}_2) + H_3 \partial_1(-\partial_1 \partial_1
				\tilde{\phi}_2 - \omega^2 \eps \mu\tilde{\phi}_2) \right\}
				\\
				= \int_{\Omega } \left(-i \omega \eps E_2 +(f_e)_2\right)
				(-\partial_1 \partial_1 \tilde{\phi}_2 - \omega^2 \eps
				\mu\tilde{\phi}_2)\,.
			\end{aligned}
		\end{align}
		We simplify again with
		${\phi}_2(x_1,x_2,x_3) = {\varphi}(x_1)\zeta_2(x_2,x_3) =-
		\partial_1^2 \tilde{\phi}_2 - \omega^2 \eps \mu \tilde{\phi}_2$ to
		\begin{align*}
			\int_{\Omega } \left\{- H_1 \partial_3 {\phi}_2 + H_3 \partial_1\phi_2 \right\}
			= \int_{\Omega } \left(-i \omega \eps E_2 +(f_e)_2\right) \phi_2\,.
		\end{align*}
		This is \eqref{eq:Maxwell:weak:H} for $\phi = e_2 \phi_2$.
		
		\smallskip \textbf{Test-functions $\psi_2$}: For a function
		$\xi_2 \in D_\#(\overline{U};\Gamma_U)$ we use
		$\tilde{\psi}_2(x_1,x_2,x_3) \coloneqq \tilde{\varphi}(x_1)
		\xi_2(x_2,x_3)$, which satisfies
		$\tilde{\psi}_2 \in D_\#(\overline{\Omega}; \Gamma)$. We can
		choose $\psi_2 = -i \omega \mu \tilde{\psi}_2$ in
		\eqref{eq:Helmholtz:Vweak:H2} and
		$\phi_3 = \partial_1 \tilde{\psi}_2$ in
		\eqref{eq:Helmholtz:Vweak:E3}. Adding the resulting equations yields
		\eqref{eq:Maxwell:weak:E} for $\psi = e_2 \psi_2$ with
		$\psi_2(x_1,x_2,x_3) = \varphi(x_1) \xi_2(x_2,x_3)$.

		\smallskip \textbf{Test-functions $\phi_3$}: For a function
		$\zeta_3 \in D_\#(\overline{U};\Gamma_U)$ we use
		$\tilde{\phi}_3(x_1,x_2,x_3) \coloneqq
		\tilde{\varphi}(x_1)\zeta_3(x_2,x_3)$, which satisfies
		$\tilde{\phi} \in D_\#(\overline{\Omega}, \Gamma)$. We can choose
		$\phi_3 = i \omega \eps \tilde{\phi}_3$ in
		\eqref{eq:Helmholtz:Vweak:E3} and
		$\psi_2 = \partial_1 \tilde{\phi}_3$ in
		\eqref{eq:Helmholtz:Vweak:H2}. Adding the resulting equations yields
		\eqref{eq:Maxwell:weak:H} for $\phi = e_3 \phi_3$ with
		$\phi_3(x_1,x_2,x_3) = \varphi(x_1)\zeta_3(x_2,x_3)$.
		
		\smallskip\textbf{Test-functions $\psi_3$}: For $\xi_3 \in D_\#(U)$
		we use
		$\tilde{\psi}_3(x_1,x_2,x_3) = \tilde{\varphi}(x_1) \xi_3(x_2,x_3)$,
		which satisfies $\tilde{\psi}_3\in D_\#(\Omega)$. We can choose
		$\psi_3 = -i \omega \mu \tilde{\psi}_3$ in
		\eqref{eq:Helmholtz:Vweak:H3} and
		$\phi_2 = \partial_1 \tilde{\psi}_3$ in
		\eqref{eq:Helmholtz:Vweak:E2}. Adding the resulting equations
		yields \eqref{eq:Maxwell:weak:E} for $\psi = e_3 \psi_3$ with
		$\psi_3(x_1,x_2,x_3) = \varphi(x_1) \xi_3(x_2,x_3) $.
		
		\smallskip \textbf{Density of test-functions:} Taking finite linear
		combinations, we obtain that $(E,H)$ solves \eqref{eq:Maxwell:weak}
		for arbitrary $\phi \in \calX$ and $\psi \in \calY$, where $\calX$
		and $\calY$ are given as vector spaces of smooth functions. In the
		subsequent formula, the index $i \in \{1,2,3\}$ stands for the
		components of the vector fields $\phi = (\phi_1,\phi_2,\phi_3)$ and
		$\psi = (\psi_1,\psi_2,\psi_3)$:
		\begin{equation}\label{eq:def:AB}
			\begin{split}
				&\calX \coloneqq \Bigg\{\phi \colon \Omega \to \C^3 \,\bigg|
				\,\phi_i(x_1,x_2,x_3) = \sum\limits_{j = 1}^N
				\varphi_i^{j}(x_1)\zeta_i^{j}(x_2,x_3) \text{ for some $N\in \N$},\,
				\\
				&\hspace{15mm} \varphi_i^{j} \in D_\#(I_1)\, \forall i\leq 3\,,\zeta_1^{j} \in D_\#(U\setminus \Gamma_U),
				\zeta_2^{j} \in D_\#(U), \zeta_3^{j} \in D_\#(\overline{U};
				\Gamma_U)\ \forall j \Bigg\}\,,
				\\
				&\calY \coloneqq \Bigg\{\psi \colon \Omega \to \C^3 \,\bigg|\,
				\psi_i(x_1,x_2,x_3) = \sum\limits_{j = 1}^N
				\varphi_i^{j}(x_1)\xi_i^{j}(x_2,x_3) \text{ for some $N \in \N$},\,
				\\
				&\hspace{5mm}	\varphi_i^{j} \in D_\#(I_1)\, \forall i\,,\,\xi_1^{j} \in D_\#(\overline{U})\,,\,
				\partial_{x_3} \xi_1^j \in D_\#(U), \xi_2^{j} \in D_\#(\overline{U},
				\Gamma_U),
				\xi_3^{j} \in D_\#(U)\forall j\Bigg\}\,.
			\end{split}
		\end{equation}
		By Lemma \ref{lem:density} below, the set $\calX$ is dense in $X$ (in the
		topology of $H(\curl, \Omega)$), and $\calY$ is dense in $Y$ (in the
		topology of $H(\curl, \Omega\setminus \Gamma)$). We therefore obtain
		\eqref{eq:Maxwell:weak} for all $\psi \in Y$ and $\phi \in X$. This
		verifies that $(E,H)$ is a solution to the Maxwell system.
	\end{proof}
	
	It remains to prove the density result of Lemma \ref{lem:density}. As
	a preparation, we show a density property for scalar-valued functions,
	formulated as Lemma~\ref {lem:density-vanishing-normal-derivative}. We
	mention that a similar argument is also used in the proof of
	Lemma~\ref {lem:regularity:Distrib-Boundary-Neumann}.
	\begin{lemma}[$H^1$-density of functions with vanishing normal derivative]
		\label{lem:density-vanishing-normal-derivative} We define a vector
		space of scalar-valued smooth functions as
		
		\begin{align*}
			\calZ \coloneqq \Big\{u \in
			& D_\#(\overline{\Omega}) 
			\, \Big| \,
			u(x) = \sum\limits_{j = 1}^N
			\phi^{j}_1(x_1) \phi^{j}_2(x_2) \zeta^{j}_3(x_3)\text{ for some $N\in \N$} ,\,	 \\
			&\phi_1^{j} \in D_\#(I_1) \,, \, \phi_2^{j} \in D_\#(I_2)\,,\, 
			\zeta^{j} \in C^\infty(\overline{I_3})
			\text{ with } \partial_{x_3}\zeta^{j} \in C^\infty_c(I_3)\,
			\forall j\Big\}\,.
		\end{align*}
		This space is dense in $H^1_\#(\Omega)$, there holds
		$\overline{\calZ}^{H^1(\Omega)} =H^1_\#(\Omega)$
	\end{lemma}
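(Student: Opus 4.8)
The plan is to reduce the three–dimensional statement to a one–dimensional density fact in the $x_3$–variable, using a Fourier decomposition in the two periodic tangential variables $x_1,x_2$. The one–dimensional core I would isolate is that the space
\[
\cS \coloneqq \{\zeta \in C^\infty(\overline{I_3}) \mid \partial_{x_3}\zeta \in C^\infty_c(I_3)\}
\]
is dense in $H^1(I_3)$; this is precisely the assertion that forcing the normal derivative to vanish near $x_3 = \pm l_3$ does not obstruct $H^1$–approximation. To see it, take $g \in H^1(I_3)$, where the endpoint value $g(-l_3)$ is well defined by $H^1(I_3)\hookrightarrow C(\overline{I_3})$. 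Choose $h_n \in C^\infty_c(I_3)$ with $h_n \to \partial_{x_3} g$ in $L^2(I_3)$ and set $\zeta_n(x_3) \coloneqq g(-l_3) + \int_{-l_3}^{x_3} h_n$. Then $\zeta_n \in \cS$, $\partial_{x_3}\zeta_n = h_n \to \partial_{x_3} g$ in $L^2(I_3)$, and $\|\zeta_n - g\|_{L^\infty(I_3)} \le \sqrt{2 l_3}\,\|h_n - \partial_{x_3} g\|_{L^2(I_3)} \to 0$ by Cauchy--Schwarz, so $\zeta_n \to g$ in $H^1(I_3)$.

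Next I would set up the tangential Fourier expansion. Writing $e_{k_1}(x_1) = l_1^{-1/2} e^{2\pi i k_1 x_1/l_1} \in D_\#(I_1)$ and $e_{k_2}(x_2) = l_2^{-1/2} e^{2\pi i k_2 x_2/l_2} \in D_\#(I_2)$, every $u \in H^1_\#(\Omega)$ decomposes as $u = \sum_{k_1,k_2\in\Z} c_{k_1,k_2}(x_3)\,e_{k_1}(x_1)e_{k_2}(x_2)$ with $c_{k_1,k_2} \in H^1(I_3)$; indeed $\partial_{x_3}c_{k_1,k_2}$ is the $(k_1,k_2)$–Fourier coefficient of $\partial_{x_3}u \in L^2(\Omega)$. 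Since the exponential basis diagonalizes $-\partial_1^2$ and $-\partial_2^2$, Parseval yields the weighted identity
\[
\|u\|_{H^1(\Omega)}^2 = \sum_{k_1,k_2\in\Z}\Big[(1 + \lambda_{k_1} + \mu_{k_2})\,\|c_{k_1,k_2}\|_{L^2(I_3)}^2 + \|\partial_{x_3}c_{k_1,k_2}\|_{L^2(I_3)}^2\Big]\,,
\]
with $\lambda_{k_1} = (2\pi k_1/l_1)^2$ and $\mu_{k_2} = (2\pi k_2/l_2)^2$. This identity is the technical heart, as it controls the full $H^1(\Omega)$–norm by the $H^1(I_3)$–norms of the coefficients.

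Finally I would assemble the approximation in two steps. Given $\varepsilon > 0$, I truncate to the finite mode set $\max(|k_1|,|k_2|)\le N$; the resulting $u_N$ converges to $u$ in $H^1_\#(\Omega)$ by convergence of the tail of the series in the displayed identity, so I fix $N$ with $\|u - u_N\|_{H^1(\Omega)} < \varepsilon/2$. For each of the finitely many coefficients $c_{k_1,k_2}$ with $\max(|k_1|,|k_2|)\le N$, I use the one–dimensional density to choose $\zeta_{k_1,k_2} \in \cS$ with $\|\zeta_{k_1,k_2} - c_{k_1,k_2}\|_{H^1(I_3)}$ arbitrarily small, and set $\tilde u_N \coloneqq \sum_{\max(|k_1|,|k_2|)\le N}\zeta_{k_1,k_2}\,e_{k_1}e_{k_2} \in \calZ$. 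Applying the weighted identity to $u_N - \tilde u_N$, whose sum runs over the fixed finite index set, gives $\|u_N - \tilde u_N\|_{H^1(\Omega)} < \varepsilon/2$, hence $\|u - \tilde u_N\|_{H^1(\Omega)} < \varepsilon$. Since $\calZ \subset D_\#(\overline\Omega) \subset H^1_\#(\Omega)$, this proves $\overline{\calZ}^{H^1(\Omega)} = H^1_\#(\Omega)$.

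I would expect the only genuinely delicate point to be the one–dimensional density of $\cS$, i.e.\ that the vanishing–normal–derivative constraint is compatible with $H^1$–density; everything else (the Parseval identity, the Fourier truncation, and the finite–sum error estimate) is routine once the weighted norm identity is recorded. The reduction to products $e_{k_1}(x_1)e_{k_2}(x_2)\zeta(x_3)$ of the required form is automatic because the exponential basis is itself a tensor product.
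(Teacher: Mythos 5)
Your proof is correct. The decisive idea coincides with the paper's: the only obstruction to density is the $x_3$-profile, and you remove it by approximating $\partial_{x_3}$ of the profile in $L^2(I_3)$ by a function in $C^\infty_c(I_3)$ and integrating from an endpoint --- this is exactly the correction step in the paper's proof (there anchored at $x_3=l_3$ rather than $-l_3$). Where you differ is in how the tensor-product decomposition is obtained. The paper simply invokes a ``classical'' approximation of $u$ by finite sums $\sum_j \phi^j_{\delta,1}(x_1)\phi^j_{\delta,2}(x_2)\zeta^j_\delta(x_3)$ without proof, and then has to track the constants $N_\delta$ and $C_\delta=\max_j\|\phi^j_\delta\|_{C^1}$ by hand to control the error of the correction, term by term and derivative by derivative. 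You instead make the decomposition explicit via the tangential Fourier basis $e_{k_1}(x_1)e_{k_2}(x_2)$, which buys you two things: the first approximation step (truncation of the series) is proved rather than cited, and the weighted Parseval identity replaces the paper's $N_\delta^{-1}C_\delta^{-1}$ bookkeeping by orthogonality, so the finite-sum error estimate is immediate. The cost is that your argument is tied to the specific exponential basis (harmless here, since the exponentials lie in $D_\#(I_1)$ and $D_\#(I_2)$ and the paper works with complex-valued functions throughout), whereas the paper's formulation would survive with any separated approximation. Both routes are sound; yours is the more self-contained of the two.
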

	
	\begin{proof}
		Let $u \in H^1_\#(\Omega)$ be an arbitrary function that we wish to
		approximate and let $\delta>0$ be arbitrary. 
		We choose an
		approximation
		$u_\delta(x) = \sum\limits_{j=1}^{N_\delta} \phi_\delta^j(x_1,x_2)
		\zeta_\delta^j(x_3)$ for some $N_\delta\in\N$ with
		$\|u - u_\delta\|_{H^1(\Omega)} < \delta$ with the properties:
		$\phi_\delta^j(x_1,x_2) = \phi_{\delta,1}^j(x_1) \phi_{\delta,2}^j$ for
		$\phi_{\delta,1}^{j} \in D_\#(I_1)$, $\phi_{\delta,2}^{j} \in D_\#(I_2)$
		and $\zeta_\delta^j \in C^\infty(\overline{I_3})$ for all
		$j \le N_\delta$. The approximation $u_\delta$ is not necessarily in
		$\calZ$, since $\partial_{x_3}\zeta^{j} \in C^\infty_c(I_3)$ is not
		guaranteed.
		
		We therefore construct a second approximation: For fixed $\delta$, we
		can approximate, for every $j \le N_\delta$, the derivative
		$\partial_{x_3} \zeta_\delta^j$ by a function
		$f_\delta^j \in C^\infty_c(I_3)$ such that
		$\|\partial_{x_3} \zeta_\delta^j -f_\delta^j\|_{L^2(I_3)} < \delta
		N_\delta^{-1} C_\delta^{-1}$ for
		$C_\delta \coloneqq \max\limits_{j \in \{1, \dots N_\delta\}}
		\|\phi_\delta^j\|_{C^1(\overline{\Omega})}$. We define an improved
		approximating function $\zeta$ as the integral over $f$ as follows:
		\begin{align*}
			\tilde{\zeta}_\delta^j(x_3) \coloneqq \tilde{\zeta}_\delta^j(l_3)
			+ \int_{l_3}^{x_3}f_\delta^j(t) \mathrm{d} t\,.
		\end{align*}
		This approximation satisfies, for some constant $C$ that is
		independent of $\delta$, $j$, $N_\delta$ and $C_\delta$, an estimate
		$\|\zeta_\delta^j- \tilde{\zeta}_\delta^j\|_{H^1(I_3)} < \delta C
		N_\delta^{-1} C_\delta^{-1}$. We can now define the approximation
		$\tilde{u}_\delta \in \calZ$ as
		\begin{align*}
			\tilde{u}_\delta(x) \coloneqq
			\sum\limits_{j=1}^{N_\delta} \phi_\delta^j(x_1,x_2) \tilde{\zeta}_\delta^j(x_3) \,.
		\end{align*}
		We obtain that $\tilde{u}_\delta$ is a good approximation of $u$ from
		\begin{align*}
			\|u_\delta -\tilde{u}_\delta\|_{L^2(\Omega)} 
			&\leq \sum\limits_{j=1}^{N_\delta} \|\phi_\delta^j(x_1,x_2)
			(\zeta_\delta^j - \tilde{\zeta}_\delta^j) \|_{L^2(\Omega)}
			\\
			&\leq N_\delta \max\limits_{j \le N_\delta}
			\|\phi_\delta^j\|_{C^0(\overline{\Omega})}
			\max\limits_{j \le N_\delta} \|\zeta_\delta^j - \tilde{\zeta}_\delta^j\|_{L^2(\Omega)}
			\leq C \delta\,.
		\end{align*}
		Derivative such as
		$\|\partial_i (u_\delta -\tilde{u}_\delta)\|_{L^2(\Omega)}$ for
		$i \in \{1,2,3\}$ are also of order $\delta$, as can be shown with
		the same calculation, replacing functions that depend on $x_i$ by
		their derivative with respect to $x_i$.
	\end{proof}

	\begin{lemma}[Density of functions in $X$ and $Y$]
		\label{lem:density}
		The vector spaces $\calX,\calY$ of \eqref{eq:def:AB} have the
		density properties $\overline{\calX}^{H(\curl, \Omega)} = X$ and
		$\overline{\calY}^{H(\curl, \Omega\setminus \Gamma)} = Y$.
	\end{lemma}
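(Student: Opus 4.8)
The plan is to exploit that $\Omega = I_1 \times U$ is a product domain and that $X,Y$ impose periodicity in the horizontal variables $x_1,x_2$. The essential structural fact is that $\curl$ never differentiates the third component in the $x_3$-direction: in $\curl u$ the entry $u_3$ occurs only through $\partial_1 u_3$ and $\partial_2 u_3$, while $\partial_3$ falls only on $u_1$ and $u_2$. Consequently, once the horizontal directions are regularized, the three components decouple into scalar problems that can be settled with Lemma~\ref{lem:density-vanishing-normal-derivative} together with its Dirichlet and $L^2$ analogues. I treat $\overline{\calX}^{H(\curl,\Omega)}=X$; the argument for $\calY$ and $Y$ is the same up to exchanging the role of the interface and the boundary conditions.

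First I would regularize an arbitrary $u\in X$ by convolving with a mollifier in the two periodic directions $x_1,x_2$ only. Since translations in $x_1,x_2$ preserve $\Omega$, $\Gamma$ and all defining conditions of $X$, the mollified field stays in $X$ and converges to $u$ in $H(\curl,\Omega)$. The point of the regularization is a gain of regularity for the tangential components: from the curl identities $\partial_3 u_1 = (\curl u)_2 + \partial_1 u_3$ and $\partial_3 u_2 = \partial_2 u_3 - (\curl u)_1$, whose right-hand sides are now in $L^2$ because the horizontal derivatives of $u_3$ are controlled after mollification, I obtain $u_1,u_2\in H^1(\Omega_+)\cap H^1(\Omega_-)$. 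The third component $u_3$ remains merely in $L^2(\Omega)$, with horizontal derivatives in $L^2$ but no control on $\partial_3 u_3$; this is harmless precisely because $\partial_3 u_3$ does not enter $\curl u$.

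Next I would separate the $x_1$-variable by a Fourier series $u=\sum_{k}u^{(k)}(x_2,x_3)\,e^{2\pi i k x_1/l_1}$; the smoothness gained in $x_1$ makes the tail negligible in $H(\curl,\Omega)$, so it suffices to approximate finitely many coefficients $u^{(k)}\colon U\to\C^3$. For each retained mode the conditions read off componentwise: $u_1^{(k)}$ vanishes on $\delhU$ and on $\Gamma_U$; $u_2^{(k)}$ vanishes on $\delhU$ and is continuous across $\Gamma_U$; and $u_3^{(k)}$ is unconstrained and may jump across $\Gamma_U$. I then approximate $u_1^{(k)},u_2^{(k)}$ in $H^1(U_+)\cap H^1(U_-)$ by smooth functions carrying exactly these conditions — the classical Dirichlet density of compactly supported smooth functions supplies profiles in $D_\#(U\setminus\Gamma_U)$ and in $D_\#(U)$ respectively — while $u_3^{(k)}$ is approximated merely in $L^2(U)$ by profiles in $D_\#(\overline{U};\Gamma_U)$, which is immediate. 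Multiplying by $e^{2\pi i k x_1/l_1}\in D_\#(I_1)$ and summing over the finitely many modes produces a function of the form required in \eqref{eq:def:AB}, hence an element of $\calX$ close to $u$. For $\calY$ the only change is that $u_1$ is continuous across $\Gamma$ but free at $\delh$, so its approximation uses Lemma~\ref{lem:density-vanishing-normal-derivative} itself, which is exactly what produces the Neumann-type structure $\partial_3\xi_1\in D_\#(U)$.

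The step I expect to require the most care is the simultaneous control of all three curl components during the approximation. The delicate point is that $u_3$ is approximated only in $L^2$, with no convergence of $\partial_3 u_3$; this is compensated exactly by the structural fact that $\curl u$ contains $u_3$ only through horizontal derivatives, so that $L^2$-convergence of $u_3$ together with $H^1$-convergence of $u_1,u_2$ already forces convergence of $\curl u$ in $L^2$. The remaining bookkeeping — verifying that each separated profile lands in the correct space $D_\#(U\setminus\Gamma_U)$, $D_\#(U)$ or $D_\#(\overline{U};\Gamma_U)$, and that horizontal mollification genuinely commutes with the interface-linear and periodic constraints defining $X$ and $Y$ — is routine but must be carried out component by component, since the condition at $\Gamma$ differs across the three components and across the two spaces.
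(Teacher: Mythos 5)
Your overall strategy --- mollify in the two periodic horizontal directions, use the curl identities $\partial_3 u_1 = (\curl u)_2 + \partial_1 u_3$ and $\partial_3 u_2 = \partial_2 u_3 - (\curl u)_1$ to upgrade $u_1,u_2$ to $H^1(\Omega_\pm)$, and then approximate component by component with separated profiles landing in $D_\#(U\setminus\Gamma_U)$, $D_\#(U)$, $D_\#(\overline{U};\Gamma_U)$ --- is exactly the route of the paper's proof (Steps 1 and 2), with your Fourier truncation in $x_1$ playing the role of the paper's ``classical approximation arguments''.

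There is, however, a genuine error in your treatment of the third component. You approximate $u_3$ (respectively each coefficient $u_3^{(k)}$) only in $L^2$ and assert that ``$L^2$-convergence of $u_3$ together with $H^1$-convergence of $u_1,u_2$ already forces convergence of $\curl u$ in $L^2$'' because $u_3$ enters the curl only through horizontal derivatives. Horizontal derivatives are still derivatives: $(\curl u)_1 = \partial_2 u_3 - \partial_3 u_2$ and $(\curl u)_2 = \partial_3 u_1 - \partial_1 u_3$, and $L^2$-convergence of $u_3$ gives no control of $\partial_2 u_3$ (for a fixed finite set of $x_1$-modes the term $\partial_1 u_3$ happens to be saved by the factor $2\pi i k/l_1$, but $\partial_2 u_3^{(k)}$ is a genuine derivative of the coefficient and is not controlled by $\|u_3^{(k)}\|_{L^2(U)}$). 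Hence the first two components of $\curl(u-\tilde u)$ are not estimated by what you prove. The repair is precisely what your Step 1 already provides and what the paper exploits in \eqref{eq:approximation-H-d}: after horizontal mollification one has $u_3\in L^2(I_3, H^1_\#(I_1\times I_2))$, and the third component must be approximated in \emph{that} norm (equivalently, $u_3^{(k)}$ in $L^2(I_3,H^1_\#(I_2))$ rather than merely in $L^2(U)$); with this correction the curl estimate closes as in \eqref{eq:approximation-curl-H-d}.
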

	
	Before we start the slightly technical proof, let us sketch the
	overall idea. Let us assume that we want to approximate a function
	$H\in Y$ up to an error of order $\delta$. We construct an
	approximation $\tilde H_\delta$ in two steps. Step 1: With a
	convolution, we mollify $H$ in the horizontal variables $x_1$ and
	$x_2$; this defines the first approximation, $H_\delta$. The
	approximation keeps relevant properties of $H$ and satisfies,
	additionally, smoothness,
	$H_{\delta,1}, H_{\delta,2} \in H^1(\Omega\setminus\Gamma)$, $H_{\delta,3} \in L^2(I_3, H^1_\#(I_1 \times I_2, \C^3))$ (we recall
	that, for the original function $H$, we have only a control of the
	curl). Step 2: We approximate $H_\delta$ to satisfy all desired
	properties.

	\begin{proof}
		\textit{Approximation of $H \in Y$:} Let $H \in Y$ be an arbitrary
		function that we wish to approximate. {\em Step 1: Horizontal
			smoothing.} For a compactly supported function $\eta$ on $\R^2$
		with integral $1$, we define the standard mollifier as
		$\eta_\delta \coloneqq \delta^{-2} \eta(\cdot / \delta)$ for
		$\delta >0$. We choose
		$H_\delta \coloneqq H \ast_{1,2} \eta_\delta$, where $\ast_{1,2}$
		denotes the convolution with respect to the first and second
		argument; with the notation $y = (\hat{y}, 0)$ for
		$\hat{y} \in \R^2$, we set
		\begin{align*}
			H_\delta(x) \coloneqq \int_{\R^2} \curl H(x-y)\, \eta_\delta(y)
			\,\mathrm{d}\hat{y} \,.
		\end{align*}
		The approximation is smooth in tangetial directions, in particular,
		there holds $H_\delta \in L^2(I_3, H^1_\#(I_1 \times I_2, \C^3))$.
		Since the convolution commutes with derivatives, we also obtain
		$\curl H_\delta = \curl (H \ast_{1,2} \eta_\delta) = \curl
		H\ast_{1,2} \eta_\delta \in L^2(\Omega_\pm)$ and, thus,
		$H_\delta \to H$ in $H(\curl, \Omega \setminus \Gamma)$. A standard
		calculation yields, for arbitrary $\phi =(\phi_1, \phi_2, \phi_3)$
		with
		$\phi_1 \in D_\#(\Omega\setminus\Gamma)\,,\phi_2 \in D_\#(\Omega)\,,
		\phi_3 \in D_\#(\Omega)$, that
		\begin{align}\label{eq:approximation-H}
			\begin{aligned}
				\int_{\Omega\setminus \Gamma} \curl H_\delta \cdot \phi =
				\int_{\Omega\setminus \Gamma} H_\delta \cdot \curl\phi \,.
			\end{aligned}
		\end{align}
		This shows that the first component of $H_\delta$ does not jump
		across $\Gamma$, the approximation satisfies $H_\delta \in Y$. The
		main point of this proof is that the control of the curl now allows
		to control all derivatives, at least for the first two components:
		The distributional derivatives in $\Omega\setminus \Gamma$ are
		\begin{align}
			\label{eq:derivatives-Hdelta-1}
			\partial_3 H_{\delta,1} &= (\curl H_\delta)_2 + \partial_1 H_{\delta,3}\,,\\
			\label{eq:derivatives-Hdelta-2}
			\partial_3 H_{\delta,2} &= -(\curl H_\delta)_1 - \partial_2 H_{\delta,3}\,,
		\end{align}
		and are thus elements of $L^2(\Omega)$.
		This yields
		$H_{\delta,1}, H_{\delta,2} \in H^1_\#(\Omega\setminus \Gamma)$.
		Since the first component of $H_\delta$ does not jump across
		$\Gamma$, we even have $H_{\delta,1}\in H^1_\#(\Omega)$.
		
		\smallskip {\em Step 2: Second approximation.} By classical
		approximation arguments and by Lemma
		\ref{lem:density-vanishing-normal-derivative}, we can choose a
		smooth approximation $\tilde{H}_{\delta} \in \calX$ of $H_\delta$
		such that
		\begin{align}\label{eq:approximation-H-d}
			\|\tilde{H}_{\delta,1} -H_{\delta,1}\|_{H^1(\Omega)}
			+ \|\tilde{H}_{\delta,2} -H_{\delta,2}\|_{H^1(\Omega\setminus \Gamma)}+
			\|\tilde{H}_{\delta,3} -H_{\delta,3}\|_{L^2(I_3, H^1_\#(I_1 \times I_2)} \leq \delta\,.
		\end{align}
		The proof is complete when we show that this approximation also
		satisfies that
		$\|\tilde{H}_{\delta} - H_\delta\|_{H(\curl, \Omega\setminus
			\Gamma)}$ is of order $\delta$. For the
		$L^2(\Omega\setminus \Gamma)$-norm, this is obvious from \eqref
		{eq:approximation-H-d}. We estimate the curl as
		\begin{align}\label{eq:approximation-curl-H-d}
			\| \curl (\tilde{H}_\delta - H_\delta) \|_{L^2(\Omega\setminus \Gamma)}
			= \left\|
			\dvec{\partial_2 (\tilde{H}_{\delta,3} - {H}_{\delta,3}) - \partial_3 (\tilde{H}_{\delta,2} - H_{\delta,2})}
			{\partial_3 (\tilde{H}_{\delta,1} - {H}_{\delta,1}) - \partial_1 (\tilde{H}_{\delta,3} - H_{\delta,3})}
			{\partial_1 (\tilde{H}_{\delta,2} - {H}_{\delta,2}) - \partial_2 (\tilde{H}_{\delta,1} - H_{\delta,1})} \right\|_{L^2(\Omega\setminus \Gamma)} 
			\leq 6 \delta\,,
		\end{align}
		where we used the $L^2(\Omega)$-smallness
		\eqref{eq:approximation-H-d} of the entries.
		
		\textit{Approximation of $E \in X$:} We approximate $E \in X$
		analogously to $H \in Y$. We define
		$E_\delta \coloneqq E \ast_{1,2} \eta_\delta \in H(\curl, \Omega)$.
		One obtains the relation of \eqref{eq:approximation-H} with
		$H_\delta$ replaced $E_\delta$, and with $\phi$ replaced by
		$\psi \in Y$. This verifies $E_\delta \in X$. We identify the
		distributional derivatives of $E$ by means of the curl similarly to
		\eqref{eq:derivatives-Hdelta-1}--\eqref{eq:derivatives-Hdelta-2} and
		obtain $E_{\delta,1}, E_{\delta,2} \in H^1_\#(\Omega)$. Moreover,
		because of $E \in X$, we can deduce further
		$E_{\delta,1} \in H^1_{0,\#}(\Omega\setminus \Gamma)$ and
		$E_{\delta,2} \in H^1_{0,\#}(\Omega)$. Finally, we approximate
		$E_\delta$ by $\tilde{E}_\delta \in \calX$ with respect to the
		$H^1(\Omega) \times H^1(\Omega\setminus \Gamma) \times L^2(I_3,
		H^1_\#(I_1 \times I_2))$-norm by classical approximation arguments.
		The convergence in $H(\curl, \Omega)$ follows as in
		\eqref{eq:approximation-curl-H-d}.
	\end{proof}

	\section{Existence and uniqueness of the Helmholtz-type system}
	\label{sec.ex-un-Helmholtz}

	\begin{proposition}[Existence and uniqueness for Helmholtz]
		\label{prop:existence-Htz}
		Let $\Omega$ and $\Gamma$ be as in \eqref{eq:def:Omega}, we consider
		the Helmholtz system \eqref{eq:Helmholtz:strong} with parameters
		$\eps, \mu >0$ and a frequency $\omega$ with
		$\omega^2 \notin \sigma_M$ of \eqref {eq:spec-M}. Then, for
		arbitrary $(f_h,f_e) \in L^2(\Omega, \C^3)^2$, system
		\eqref{eq:Helmholtz:strong} has a unique weak solution $(E, H)\in X_H \times Y_H$.
	\end{proposition}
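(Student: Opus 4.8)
The plan is to exploit the triangular structure of the weak system \eqref{eq:Helmholtz:weak} and to follow the three-step procedure announced after \eqref{eq:Helmholz:BC}. Equation \eqref{eq:Helmholtz:weak:E1} involves only $E_1$ and equation \eqref{eq:Helmholtz:weak:H1} only $H_1$; once these two components are fixed, each of \eqref{eq:Helmholtz:weak:E2}--\eqref{eq:Helmholtz:weak:H3} determines exactly one of $E_2,E_3,H_2,H_3$ independently of the others. I would treat existence and uniqueness simultaneously, showing that each step is a linear problem that is uniquely solvable precisely when $\omega^2\notin\sigma_M$. Throughout, the right-hand side functionals $F_{E_1},\dots,F_{H_3}$ of \eqref{eq:def:F} are bounded linear functionals on the relevant spaces (by Cauchy--Schwarz, using $f_h,f_e\in L^2(\Omega,\C^3)$), so each step fits the Lax--Milgram/Fredholm framework.

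First I would solve the two scalar Helmholtz problems for $E_1$ and $H_1$. Equation \eqref{eq:Helmholtz:weak:E1} is posed in $H^1_{0,\#}(\Omega\setminus\Gamma)$, whose functions carry a homogeneous Dirichlet condition on $\Gamma$ and on $\delh$; this makes the problem decouple into two independent problems on $\Omega_+$ and $\Omega_-$, each with periodicity in $x_1,x_2$ and Dirichlet conditions in $x_3$. On the box $\Omega_\pm$ the operator $-(\eps\mu)^{-1}\Delta$ with these conditions has compact resolvent (Rellich embedding) and the explicit eigenbasis $\cos(2\pi k_1x_1/l_1)\cos(2\pi k_2x_2/l_2)\sin(\pi k_3x_3/l_3^\pm)$; its eigenvalues form the subset of $\sigma(l_1,l_2,l_3^\pm)$ with $k_3\geq 1$. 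By the Fredholm alternative the shifted problem for $E_1$ is uniquely solvable because $\omega^2\notin\sigma(l_1,l_2,l_3^\pm)\subseteq\sigma_M$. Equation \eqref{eq:Helmholtz:weak:H1}, by contrast, is posed on all of $H^1_\#(\Omega)$, so $H_1$ carries no jump across $\Gamma$ and the natural (Neumann) condition \eqref{eq:Helmholz:BCH} is encoded in $F_{H_1}$; the relevant operator is $-(\eps\mu)^{-1}\Delta$ on $\Omega$ with Neumann conditions in $x_3$, whose spectrum is exactly $\sigma(l_1,l_2,l_3)$. Again by the Fredholm alternative there is a unique $H_1\in H^1_\#(\Omega)$ since $\omega^2\notin\sigma(l_1,l_2,l_3)\subseteq\sigma_M$. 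With $E_1,H_1$ fixed, all their first derivatives lie in $L^2(\Omega)$, which is exactly the regularity the remaining equations require on their right-hand sides.

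For the four components $E_2,E_3,H_2,H_3$ I would, for each of \eqref{eq:Helmholtz:weak:E2}--\eqref{eq:Helmholtz:weak:H3}, fix $(x_2,x_3)\in U$ and read the equation as a one-dimensional periodic Helmholtz problem on $I_1$, testing against $\varphi\in H^1_\#(I_1)$. The one-dimensional operator $-\partial_1^2-\omega^2\eps\mu$ on $H^1_\#(I_1)$ is invertible iff $\omega^2\eps\mu\neq(2\pi k_1/l_1)^2$ for all $k_1\in\N_0$, i.e. iff $\omega^2\notin\sigma(l_1)\subseteq\sigma_M$; this is the only point where the frequency condition is genuinely one-dimensional. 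The inverse is bounded uniformly in the parameter $(x_2,x_3)$, so squaring the resulting $H^1(I_1)$-estimate and integrating over $U$ against the $L^2$-data (the terms involving $\partial_2E_1,\partial_3H_1,(f_h)_3,(f_e)_2$, and so on) yields membership in $W=L^2(U,H^1_\#(I_1))$, exactly as in the $W$-regularity argument of Lemma~\ref{lem:Vweak-Helmholtz-implies-weakHelmholtz}. Assembling the slices places $(E,H)$ in $X_H\times Y_H$ and produces a weak solution of \eqref{eq:Helmholtz:weak}.

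Uniqueness follows by running the same three steps with vanishing data: injectivity of the two shifted Laplacians forces $E_1=0$ and $H_1=0$, whereupon the homogeneous one-dimensional problems force $E_2=E_3=H_2=H_3=0$. I expect the only genuinely delicate point to be the bookkeeping in the first step, namely verifying that the Dirichlet trace on $\Gamma$ built into $H^1_{0,\#}(\Omega\setminus\Gamma)$ really decouples the $E_1$-problem across $\Gamma$, and that the Dirichlet eigenvalues on $\Omega_\pm$ do lie inside $\sigma(l_1,l_2,l_3^\pm)$. The crux is the matching of each boundary condition (Dirichlet for $E_1$, Neumann for $H_1$, periodic for the one-dimensional slices) with the correct constituent of $\sigma_M$; this correspondence is what makes the hypothesis $\omega^2\notin\sigma_M$ both necessary and sufficient for well-posedness.
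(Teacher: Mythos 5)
Your proposal is correct and follows essentially the same route as the paper's proof: the triangular three-step solve (two Dirichlet problems for $E_1$ on $\Omega_\pm$, one Neumann problem for $H_1$ on $\Omega$, then parametrized one-dimensional periodic Helmholtz problems on $I_1$ for the remaining four components), with each step's invertibility matched to the corresponding constituent of $\sigma_M$. The paper's version is much terser --- it simply asserts the unique solvability of the scalar Helmholtz problems and of the one-dimensional operator via Fourier series --- whereas you spell out the spectral identifications, the Fredholm alternative, and the uniform-in-$(x_2,x_3)$ estimate giving $W$-membership, all of which are consistent with the paper's setup.
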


	\begin{proof}
		Because of $\omega^2 \notin \sigma_M$, the two Helmholtz problems
		\eqref {eq:Helmholtz:strong:E1} for $E_1$ in $\Omega_+$ and in
		$\Omega_-$ can be solved uniquely. Furthermore, the Helmholtz
		problem \eqref{eq:Helmholtz:strong:H1} for $H_1$ in $\Omega$ can be
		solved uniquely.
		
		The operator $(-\partial_1^2 - \omega^2 \eps \mu \operatorname{id})$
		can be inverted. More precisely, the equation
		\begin{equation}\label{eq:1D}
			-\partial_1^2 \varphi - \omega^2 \eps \mu \varphi = g + \del_1 h 
		\end{equation}
		has a unique solution $\fhi\in H^1_\#(I_1)$ for every
		$g, h\in L^2(I_1)$, as can be shown, e.g., with Fourier series. This
		observation allows us to solve the remaining four equations of \eqref
		{eq:Helmholtz:strong} uniquely for $E_2, H_2, E_3, H_3$.
	\end{proof}
	
	The next lemma shows that the spectrum $\sigma(H)$ of
	\eqref{eq:Helmholtz:weak} is $\sigma_M$.
	
	\begin{lemma}[Spectrum of the Helmholtz-type system]
		\label{lem:Existence:Helmholtz} In the situation of
		Proposition~\ref{prop:existence-Htz}, but with
		$\omega^2 \in \sigma_M$, the homogeneous Helmholtz-type system has a
		non-trivial solution.
	\end{lemma}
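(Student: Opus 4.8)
The plan is to obtain the non-trivial Helmholtz solution for free from the Maxwell eigenfunctions already constructed in Lemma~\ref{lem:eigenfct-Max}, transported to the Helmholtz-type system via Lemma~\ref{lem:Maxwell-implies-Helmholtz}. The homogeneous system corresponds to $f_h = f_e = 0$, so the standing hypotheses on the data are automatic: $(0,0) \in Y_0^\perp \times X_0^\perp$ holds trivially, and hence both lemmas are applicable.

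First I would invoke Lemma~\ref{lem:eigenfct-Max}: since $\omega^2 \in \sigma_M$, the homogeneous Maxwell system possesses a non-trivial solution $(E,H)$, given explicitly by \eqref{eq:whole-case-fields} in the case $\omega^2 \in \sigma(l_1,l_2,l_3)$ and by \eqref{eq:half-case-fields} in the case $\omega^2 \in \sigma_M \setminus \sigma(l_1,l_2,l_3)$. By Lemma~\ref{lem:Regularity-Maxwell} this solution satisfies $(E,H) \in X \times Y$. Next, since Lemma~\ref{lem:Maxwell-implies-Helmholtz} is valid for arbitrary $\omega^2 > 0$ and applies to the divergence-free data $(0,0)$, the pair $(E,H)$ is a weak solution of the homogeneous Helmholtz-type system in the sense of Definition~\ref{def:Helmholtz:weak}, and in particular $(E,H) \in X_H \times Y_H$. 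As $(E,H)$ was non-trivial, this is the asserted non-trivial weak solution, which finishes the proof.

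I do not expect a serious obstacle, because the only real content, the non-triviality of the transported pair, is not something we must re-establish: it is exactly the assertion of Lemma~\ref{lem:eigenfct-Max}, and the passage from Maxwell to Helmholtz solutions preserves the functions themselves (no quotient or projection is involved), so non-triviality is inherited verbatim. The single point deserving a moment of care is the frequency restriction $\omega^2 \notin \sigma(l_1)$ appearing in the converse Lemma~\ref{lem:Helmholtz-implies-Maxwell}; here we never use that converse, only Lemma~\ref{lem:Maxwell-implies-Helmholtz}, which carries no such restriction, so the argument is valid for every $\omega^2 \in \sigma_M$, including the frequencies in $\sigma(l_1)$.

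Should one prefer an argument that bypasses Lemma~\ref{lem:eigenfct-Max}, one can instead exploit the three-step solvability structure of \eqref{eq:Helmholtz:strong}: for $\omega^2 \in \sigma(l_1,l_2,l_3)$ the Neumann problem \eqref{eq:Helmholtz:strong:H1} for $H_1$ on $\Omega$ has a non-trivial kernel, while for $\omega^2 \in \sigma(l_1,l_2,l_3^\pm) \setminus \sigma(l_1,l_2,l_3)$ the Dirichlet problem \eqref{eq:Helmholtz:strong:E1} for $E_1$ on $\Omega_\pm$ has a non-trivial kernel; one then propagates such a kernel element through the decoupled equations for $E_2, H_2, E_3, H_3$. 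This route is more case-heavy, since it must account for the coupling through the terms $\partial_3 H_1$ and $\partial_3 E_1$ and for the degenerate frequencies $\omega^2 \in \sigma(l_1)$ where the one-dimensional operator $\partial_1^2 + \eps\mu\omega^2$ is not invertible, so I would keep the reduction to Lemma~\ref{lem:eigenfct-Max} as the primary argument.
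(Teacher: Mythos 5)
Your proposal is correct and coincides with the paper's own proof: the paper likewise takes the non-trivial eigenfunctions from Lemma~\ref{lem:eigenfct-Max} and transports them to the Helmholtz-type system via Lemma~\ref{lem:Maxwell-implies-Helmholtz}, which applies since $(0,0)\in Y_0^\perp\times X_0^\perp$. Your added observations (that non-triviality is inherited verbatim and that the restriction $\omega^2\notin\sigma(l_1)$ concerns only the converse direction) are accurate but not needed beyond what the paper states.
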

	
	\begin{proof}
		In Lemma~\ref{lem:eigenfct-Max}, we obtained a non-trivial solution
		$(E,H)$ to the homogeneous Maxwell system. By Lemma
		\ref{lem:Maxwell-implies-Helmholtz} this solution is also a
		non-trivial solution $(E,H)$ to the homogeneous Helmholtz-type
		system.
	\end{proof}
	
	\begin{proof}[Proof of Theorem \ref{thm:spectrum}]
		For $\omega^2 \notin \sigma_M$, Proposition~\ref
		{prop:existence-Htz} yields the existence of a solution to the
		Helmholtz system. By Lemma~\ref {lem:Helmholtz-implies-Maxwell},
		this solution solves also the Maxwell system. Regarding uniqueness:
		When $(E,H)$ is a solution to the homogeneous Maxwell system, then,
		by Lemma~\ref {lem:Maxwell-implies-Helmholtz}, it is also a solution
		to the homogeneous Helmholtz system, and hence vanishes by the
		uniqueness statement of Proposition~\ref {prop:existence-Htz}.
		
		For $\omega^2 \in \sigma_M$, non-trivial solutions to the
		homogeneous Maxwell system are provided in Lemma~\ref
		{lem:eigenfct-Max}.
	\end{proof}

	\appendix
	
	\section{Regularity of distributional solutions}
	
	In order to prove that very weak solutions are also weak solutions,
	one has to show that very weak solutions have an $H^1$-type
	regularity. A classical result of this type is known as Weyl's lemma:
	\begin{lemma}[Lemma of Weyl for $H^{-1}$-right-hand sides]\label{lem:Weyl-H1}
		Let $U \subset \R^n$ be open and bounded, $n \in \N$.
		Let $u \in L^1_\loc(U,\C)$ be a	distributional solution of $-\Delta u = f - \nabla\cdot g$ for data $f \in L^2(U,\C)$ and $g \in L^2(U,\C^n)$ in the sense that
		\begin{align}\label{eq:dist:Laplace}
			-\int_U u \Delta \phi 
			&= \int_U f \phi 
			+ g \cdot \nabla \phi && \forall \phi \in C_c^\infty(U)\,.
		\end{align}
		Then, $u$ has the regularity $u \in H^1_\loc(U)$.
	\end{lemma}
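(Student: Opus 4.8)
The plan is to reduce the statement to the classical Weyl lemma for \emph{harmonic} distributions by subtracting off a particular solution of $-\Delta w = f - \nabla\cdot g$ whose $H^1$-regularity is known by construction. First I would reformulate the hypothesis \eqref{eq:dist:Laplace}: testing against $\phi\in C_c^\infty(U)$ and shifting one derivative onto $g$ shows that $-\Delta u = f - \nabla\cdot g$ holds in $\cD'(U)$. Since $H^1_\loc$-regularity is a local property, it suffices to prove $u\in H^1(B')$ for every ball $B'$ whose closure lies in a slightly larger ball $B$ with $\overline B\subset U$.

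Fix such a ball $B$ and set $f_B:=f|_B\in L^2(B)$ and $g_B:=g|_B\in L^2(B,\C^n)$. The next step is to produce, via Lax--Milgram, a function $w\in H^1_0(B)$ solving
\begin{equation*}
	\int_B \nabla w\cdot\nabla\overline{\phi}
	= \int_B \bigl(f_B\,\overline{\phi} + g_B\cdot\nabla\overline{\phi}\bigr)
	\qquad\forall\,\phi\in H^1_0(B)\,.
\end{equation*}
The bilinear form on the left is coercive on $H^1_0(B)$ by the Poincar\'e inequality, and the right-hand side is a bounded antilinear functional by Cauchy--Schwarz (again using Poincar\'e to control $\|\phi\|_{L^2(B)}$ by $\|\nabla\phi\|_{L^2(B)}$); hence $w$ exists and is unique. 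Choosing test functions $\phi\in C_c^\infty(B)$ shows $-\Delta w = f_B - \nabla\cdot g_B$ in $\cD'(B)$, which coincides with $f - \nabla\cdot g$ on $B$.

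Now consider $v:=u-w$. Since $u\in L^1_\loc(U)$ and $w\in H^1(B)\subset L^1_\loc(B)$, we have $v\in L^1_\loc(B)$, and by construction $-\Delta v = 0$ in $\cD'(B)$. At this point I would invoke the classical Weyl lemma for harmonic distributions: a locally integrable function whose distributional Laplacian vanishes agrees almost everywhere with a smooth (indeed real-analytic) harmonic function. Therefore $v\in C^\infty(B)$, and in particular $v\in H^1(B')$ for every $B'\Subset B$. Consequently $u = v + w\in H^1(B')$, and covering $U$ by such balls yields $u\in H^1_\loc(U)$.

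The conceptual content is concentrated in the decomposition itself. The delicate point is that the hypothesis provides merely $u\in L^1_\loc$, so a direct energy estimate on a mollification $u_\eps:=u\ast\eta_\eps$ would require an a priori $L^2_\loc$-bound on $u$ that is not yet available. The decomposition circumvents this: the rough part $w$ is $H^1$ by the variational construction, while the remaining harmonic part is automatically smooth. An alternative, self-contained route that avoids citing the classical Weyl lemma would be a Caccioppoli-type estimate on $u_\eps$, using the identity $-\Delta u_\eps = f_\eps - \nabla\cdot g_\eps$ together with a cut-off $\zeta$ to bound $\|\zeta\nabla u_\eps\|_{L^2}$ uniformly in $\eps$; there the main obstacle is precisely the passage from $L^1_\loc$ to $L^2_\loc$, which must be established first (for instance by a preliminary potential-theoretic argument) before the energy estimate can close.
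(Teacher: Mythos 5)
Your proposal is correct and follows essentially the same route as the paper: subtract a variational (Lax--Milgram) solution $w\in H^1_0$ of $-\Delta w=f-\nabla\cdot g$ and apply the classical Weyl lemma to the harmonic remainder $u-w$. The only cosmetic difference is that you localize to balls $B\Subset U$, whereas the paper performs the construction once on all of $U$ (which is admissible since $U$ is bounded).
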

	\begin{proof}
		Let $u \in L^1_\loc(U)$ be a solution to \eqref{eq:dist:Laplace}. We consider
		the weak solution $\tilde{u} \in H^1_0(U,\C)$ of $\int_U \nabla \tilde{u} \cdot \nabla \phi = \int_U f \phi + g \cdot \nabla \phi$ $\forall \phi \in H^1_0(U)$.
		The difference $(\tilde{u}- u)$ is harmonic in the sense that $\int_U (\tilde{u}- u) \Delta \phi =0$ for all $\phi \in C_c^\infty(U)$. Thus, by the classical Weyl lemma, $(\tilde{u}- u) \in C^\infty(U)$, see \cite{Weyl-1940}. This implies $u\in H^1_\loc(U)$.
	\end{proof}
	
	We note that Lemma \ref{lem:Weyl-H1} provides only local
	$H^1$-regularity. In order to obtain $H^1$-regularity on the entire
	domain, we extend the function in all directions and use the local
	$H^1$-regularity of the extended function to conclude. Extensions over
	the periodic boundary are directly given with the periodic extension
	of the function (compare the definition of periodic functions).
	Across Dirichlet boundaries, we use an odd extension, see
	Lemma~\ref{lem:regularity:Distrib-Boundary-Dirichlet}. Across Neumann
	boundaries, we use an even extension, see
	Lemma~\ref{lem:regularity:Distrib-Boundary-Neumann}. The subsequent
	lemma is used in the text for the domains $\Omega_+$ and $\Omega_-$.

	\begin{lemma}[Regularity for vanishing distributional Dirichlet
		boundary data]
		\label{lem:regularity:Distrib-Boundary-Dirichlet}
		Let the domain be a product of three open intervals,
		$\Omega = I_1\times I_2\times I_3 \subset \R^3$. For right-hand
		sides $f \in L^2(\Omega,\C)$ and $g \in L^2(\Omega,\C^3)$, let
		$u \in L^2(\Omega,\C)$ satisfy
		\begin{align}\label{eq:Helmholtz-distributional-Dirichlet}
			- \int\limits_\Omega u \Delta \phi 
			&= \int\limits_\Omega f \phi + g\cdot \nabla \phi
			&& \forall \phi \in D_\#(\Omega) \,.
		\end{align}	
		On the upper and lower boundary, let $u$ satisfy a Dirichlet
		condition in the following sense: For every
		$\fhi \in C^{\infty}_\#(I_1 \times I_2) $ there exists a function
		$h = h_\fhi\in L^2(I_3)$ such that
		\begin{align}
			\label{eq:vanish:distributional:Dirichlet}
			&\int_{I_3} \left\{ \left(\int_{I_1 \times I_2}
			u\, \fhi\right)\, \partial_3 \psi
			+ h \psi\right\} = 0 
			&& \forall \psi \in C^\infty(\overline{I_3})\,.
		\end{align}	
		Then, the solution $u$ of
		\eqref{eq:Helmholtz-distributional-Dirichlet} is of class
		$u \in H^1_{0,\#}(\Omega)$ and is a weak solution in the sense that
		\begin{align}\label{eq:IntParts-Dirichlet}
			\int\limits_\Omega \nabla u \cdot \nabla \phi
			&= \int\limits_\Omega f \phi + g \cdot \nabla \phi
			&& \forall \phi \in H^1_{0,\#}(\Omega) \,.
		\end{align}
	\end{lemma}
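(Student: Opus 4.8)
The plan is to obtain the $H^1$-regularity in stages---interior, periodic seams, and horizontal boundaries---by reducing each stage to the Weyl-type Lemma~\ref{lem:Weyl-H1}, and to read off the vanishing trace from an odd-reflection argument. First I would extend $u$, $f$ and $g$ periodically in $x_1,x_2$ to functions $\tilde u,\tilde f,\tilde g$ on $\R^2\times I_3$. Periodizing any $\Phi\in C_c^\infty(\R^2\times I_3)$ yields a test function in $D_\#(\Omega)$, so \eqref{eq:Helmholtz-distributional-Dirichlet} transfers to the assertion that $\tilde u$ solves $-\Delta\tilde u=\tilde f-\div\tilde g$ distributionally on $\R^2\times I_3$. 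Applying Lemma~\ref{lem:Weyl-H1} on bounded sub-boxes then gives $\tilde u\in H^1_{\loc}(\R^2\times I_3)$; this covers interior regularity and regularity across the periodic seams simultaneously, so $u\in H^1(I_1\times I_2\times J)$ for every $J$ compactly contained in $I_3$.

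Next I would push the regularity up to the horizontal boundaries by an odd reflection. Writing $I_3=(c_-,c_+)$ and reflecting about $c_+$, I set $U=\tilde u$ for $x_3<c_+$ and $U(x_1,x_2,x_3)=-\tilde u(x_1,x_2,2c_+-x_3)$ above, extending $f,g_1,g_2$ oddly and $g_3$ evenly---the parities that keep $-\Delta U=F-\div G$ formally consistent. The goal is to verify that $U$ solves this equation distributionally across $c_+$, i.e.\ $-\int U\Delta\Phi=\int(F\Phi+G\cdot\nabla\Phi)$ for all $\Phi\in C_c^\infty$ of a neighbourhood of $\{x_3=c_+\}$. Folding the upper half back via $x_3\mapsto 2c_+-x_3$ converts this into the single identity $-\int_{x_3<c_+}u\,\Delta\Theta=\int_{x_3<c_+}(f\Theta+g\cdot\nabla\Theta)$ with $\Theta:=\Phi-\Phi(\cdot,\cdot,2c_+-\cdot)$, a smooth function that vanishes near $c_-$ and satisfies $\Theta|_{x_3=c_+}=0$ but does \emph{not} vanish in a neighbourhood of $c_+$; hence it is not admissible in \eqref{eq:Helmholtz-distributional-Dirichlet} and the identity does not follow for free.

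This folded identity is the main obstacle, and it is exactly where the Dirichlet hypothesis \eqref{eq:vanish:distributional:Dirichlet} enters. By linearity and density it suffices to treat tensor products $\Theta=\varphi(x_1,x_2)\chi(x_3)$ with $\varphi\in C^\infty_\#(I_1\times I_2)$. I would test \eqref{eq:Helmholtz-distributional-Dirichlet} with the admissible cut-offs $\rho_\delta\Theta\in D_\#(\Omega)$, where $\rho_\delta$ vanishes near $c_+$ and equals $1$ away from it, and let $\delta\to0$. All terms converge to the desired ones except the two cut-off contributions $-2\int u\,\rho_\delta'\,\partial_3\Theta$ and $-\int u\,\rho_\delta''\,\Theta$; carrying out the $(x_1,x_2)$-integration these reduce to one-dimensional integrals against $v(x_3):=\int_{I_1\times I_2}u\,\varphi$, which is precisely the quantity appearing in \eqref{eq:vanish:distributional:Dirichlet}. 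That hypothesis tells me $v\in H^1(I_3)$ with $v'=h_\varphi$ and $v(c_\pm)=0$; one integration by parts in $x_3$ combines the two contributions into $-\int\rho_\delta'\chi'v+\int\rho_\delta'\chi\,h_\varphi$, and since $\chi(c_+)=v(c_+)=0$ each integrand carries a factor of size $O(\delta)$ (from $\chi$ or from $v$, the latter via the Hölder bound on $v$) against $|\rho_\delta'|\lesssim\delta^{-1}$ on a strip of width $\delta$, so both vanish as $\delta\to0$. This establishes the folded identity, hence that $U$ solves the distributional Laplace equation across $c_+$.

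Finally, Lemma~\ref{lem:Weyl-H1} applied to $U$ on bounded sub-boxes of $\R^2\times(c_-,2c_+-c_-)$ gives $U\in H^1_{\loc}$ across $c_+$, so $u\in H^1$ up to $c_+$; because $U$ is the odd reflection of an $H^1$ function, its membership in $H^1$ forces the trace of $u$ at $c_+$ to vanish. The same argument at $c_-$, together with the periodicity already obtained, yields $u\in H^1_\#(\Omega)$ with vanishing trace on $\delh$, i.e.\ $u\in H^1_{0,\#}(\Omega)$. It then remains to integrate \eqref{eq:Helmholtz-distributional-Dirichlet} by parts---now legitimate since $u\in H^1$ and $\phi\in D_\#(\Omega)$ vanishes near $\delh$---to get \eqref{eq:IntParts-Dirichlet} for $\phi\in D_\#(\Omega)$, and to extend it to all $\phi\in H^1_{0,\#}(\Omega)$ by the standard density of $D_\#(\Omega)$ in $H^1_{0,\#}(\Omega)$.
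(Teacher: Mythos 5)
Your proposal is correct and follows essentially the same route as the paper: periodic extension in $x_1,x_2$, odd reflection of $u$ (and suitably of the data) across each horizontal boundary, verification that the reflected function satisfies the distributional equation on the enlarged box, an application of the Weyl-type Lemma~\ref{lem:Weyl-H1}, and the vanishing trace read off at the end. The only difference is the technical device at the one point where hypothesis \eqref{eq:vanish:distributional:Dirichlet} enters: you estimate the commutator terms produced by a cut-off $\rho_\delta$ using $v_\fhi=\int_{I_1\times I_2}u\,\fhi\in H^1_0(I_3)$ (the bound is $O(\delta^{1/2})$ rather than $O(\delta)$, but it still suffices, and the neglected term $\int g_3\rho_\delta'\Theta$ also vanishes since $\rho_\delta'\Theta$ stays bounded on a shrinking strip), whereas the paper folds the test function into its odd part $\check\psi$ and approximates it by compactly supported functions; both implementations are sound, and you even identify the correct parity for the extension of $g_3$, which the paper glosses over.
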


	Regarding the boundary condition
	\eqref{eq:vanish:distributional:Dirichlet}: The condition implies
	that, for every test-function $\fhi$, the expression
	$G_\fhi \colon x_3\mapsto \left(\int_{I_1 \times I_2}u \fhi\right)$ has a
	weak derivative in $L^2(I_3)$, namely $h = \partial_3 G_\fhi$. This
	implies not only $G_\fhi\in H^1(I_3)$, but it also yields that the
	boundary values vanish, $G_\fhi\in H^1_0(I_3)$.
	
	\begin{proof}
		As announced, we extend $u$ in all directions and conclude by
		applying the Lemma of Weyl to the extended function. Without loss
		of generality, we assume $I_3 = (0,l_3)$. In directions $x_1$ and
		$x_2$, we extend periodically; to simplify notation, we keep the
		notation $u$ for the extended function,
		$u = \tilde{u} \in H^1_\loc(\Omega_\#)$. In order to show the
		$H^1$-regularity up to the boundaries $x_3 = l_3$ and $x_3 = 0$. We
		perform the proof for the lower boundary $x_3 = 0$, the other proof
		is analogous. We set
		$\check\Omega \colon = I_1\times I_2\times (-l_3,l_3)$. We reflect $u$ odd
		across the boundary $x_3 = 0$ to define the extended function
		$\check{u}$. When we show that the distributional Laplace of the
		extended function is of class $H^{-1}(\check\Omega)$, we can use the
		Lemma of Weyl on the extended domain to conclude the
		$H^1$-regularity of the original function up to the boundary.
		
		\smallskip For $x \in \Omega$ with $(x_1,x_2) \eqqcolon \hat{x}$, we
		define $\check{u}(\hat{x}, x_3)\in L^2(\Omega)$ by
		\begin{equation*}
			\check{u}(\hat{x}, x_3) \coloneqq \begin{cases} u(\hat{x}, x_3) &
				\text{if } x_3 >0\,,
				\\
				-u(\hat{x}, -x_3) & \text{if } x_3 <0 \,,
			\end{cases}
		\end{equation*}
		the functions $\check{f}$ and $\check{g}$ in the same way as odd
		extensions.
		
		We consider test-functions as follows: The horizontal dependence is
		given by $\varphi \in D_\#(I_1 \times I_2)$. Vertical dependences
		are given by $\psi \in C^\infty_c((-l_3,l_3))$. We define
		$\check{\psi}(x_3) = \psi(x_3)- \psi(-x_3)$, which is a smooth odd
		function with, in particular, $\check{\psi}(0)= 0$. This allows to
		find a sequence $\check{\psi}_n \in C^\infty_c(I_3)$ such that
		$\check{\psi}_n \to \check{\psi}|_{I_3}$ in $H^1(I_3)$.
		
		With the aim to show that $\check{u}$ solves an elliptic equation on
		$\check\Omega$, we compute, using the notation
		$\Delta_{1, 2} = \del_1^2 + \del_2^2$,
		\begin{align*}
			&\int\limits_{\check\Omega} \check{u} \Delta (\varphi\psi) = 
			\int\limits_{\Omega} \check{u} \Delta (\varphi\psi)
			+
			\int\limits_{\check\Omega\setminus\Omega} \check{u} \Delta (\varphi\psi)\\
			&=
			\int\limits_{\Omega} \check{u} \Delta (\varphi\check \psi)
			= \int\limits_{\Omega} u\, (\Delta_{1, 2} \varphi\, \check{\psi}
			+ \varphi\, \partial_3^2 \check{\psi}) 
			\\
			&\overset{\eqref{eq:vanish:distributional:Dirichlet}}{=}
			\int\limits_{\Omega} u\, \Delta_{1, 2} \varphi\, \check{\psi} 	 
			- \int\limits_{I_3}\partial_3\left( \,
			\int\limits_{I_1 \times I_2} u\,\varphi \right) \partial_{x_3} \check{\psi} 
			\\
			&= \lim\limits_{n \to \infty}
			\int\limits_{\Omega} u\, \Delta_{1, 2} \varphi\, \check{\psi}_n 
			- \int\limits_{I_3}\partial_3\left( \, \int\limits_{I_1 \times I_2} u\varphi \right)
			\partial_{x_3} \check{\psi}_n 
			\\
			&= \lim\limits_{n \to \infty}
			\int\limits_{\Omega} u\, \Delta_{1, 2} \varphi\, \check{\psi}_n 
			+ \int\limits_{I_3}\left( \, \int\limits_{I_1 \times I_2} u\varphi \right) \partial_{x_3}^2 \check{\psi}_n 
			\\
			&= \lim\limits_{n \to \infty} \int\limits_{\Omega} u\, \Delta (\varphi \check{\psi}_n)\\
			&\overset{\eqref{eq:Helmholtz-distributional-Dirichlet}}{=}
			\lim\limits_{n \to \infty} \int\limits_{\Omega}
			f\, \varphi\, \check{\psi}_n + g \cdot \nabla \varphi\, \check{\psi}_n
			\\
			&	=
			\int\limits_{\Omega} f\,\varphi\, \check{\psi}
			+ g \cdot \nabla \varphi\, \check{\psi}
			= 
			\int\limits_{\check\Omega} \check{f}\,
			\varphi\,\psi + \check{g} \cdot \nabla \varphi\, \psi\,.
		\end{align*}
		Since test-functions of the form $\phi = \fhi(x_1,x_2)\psi(x_3)$
		span a dense (dense with respect to the norm of $H^2(\check\Omega)$)
		subset of all smooth functions, we obtain
		\begin{align}
			-\int\limits_{\check \Omega} \check{u}\, \Delta \phi = 
			\int\limits_{\check \Omega} \check{f}\, \phi + \check{g} \cdot \nabla \phi\,.
		\end{align}
		for all $\phi \in D_\#(\check\Omega)$. As announced, Lemma
		\ref{lem:Weyl-H1} allows us to conclude
		$u \in H^1_{\#, \loc}(\check\Omega)$. Using an analogous reflection
		argument at the top boundary, we can deduce the $H^1$-regularity up
		to the boundary, i.e.~$u \in H^1_{\#}(\Omega)$.
		
		\smallskip It remains to check the Dirichlet condition in the sense
		of traces, $u \in H^1_{0,\#}(\Omega)$. Because of
		$u \in H^1_\#(\Omega)$, we can consider the traces on, e.g., the
		boundary $x_3 = 0$. Relation
		\eqref{eq:vanish:distributional:Dirichlet} yields, for every
		function $\fhi\in C^\infty_\#(I_1 \times I_2)$, that
		$\int_{I_1 \times I_2} u \fhi$ vanishes along $x_3 = 0$. This
		implies that the trace of $u$ along this boundary vanishes, we
		obtain $u \in H^1_{0,\#}(\Omega)$.
	\end{proof}

	\begin{lemma}[Regularity for vanishing distributional Neumann boundary
		data]
		\label{lem:regularity:Distrib-Boundary-Neumann}
		Let the domain be a product of three open intervals,
		$\Omega = I_1\times I_2\times I_3 \subset \R^3$. For right-hand
		sides $f \in L^2(\Omega,\C)$ and $g \in L^2(\Omega,\C^3)$, let
		$u \in L^2(\Omega,\C)$ satisfy
		\begin{equation}
			\label{eq:Helmholtz-distributional-Neumann}
			\int\limits_\Omega - u \Delta \phi
			= \int\limits_\Omega f \phi + g \cdot \nabla \phi\qquad
			\forall \phi \in D_\#(\bar\Omega) \text{ with } \partial_3 \phi \in D_\#(\Omega) \,.
		\end{equation}	
		We assume that vertical derivatives of averages vanish along the
		boundaries in the following sense: For every
		$\fhi \in C^{\infty}_\#(I_1 \times I_2) $ there exists a function
		$h = h_\fhi\in L^2(I_3)$ such that
		\begin{equation}
			\label{eq:vanish:distributional:Neumann}
			\int_{I_3} \left\{ \left(\int_{I_1 \times I_2}
			u\, \fhi\right)\, \partial_3 \psi
			+ h \psi\right\} = 0 \qquad
			\forall \psi \in D(I_3)\,.
		\end{equation}	
		Then, the solution $u$ of
		\eqref{eq:Helmholtz-distributional-Dirichlet} is of class
		$u \in H^1_{\#}(\Omega)$ and is a weak solution in the sense that
		\begin{equation}
			\label{eq:IntParts-Neumann}
			\int\limits_\Omega \nabla u \cdot \nabla \phi 
			= \int\limits_\Omega f \phi + g \cdot \nabla \phi\qquad
			\forall \phi \in H^1_{\#}(\Omega) \,.
		\end{equation}
	\end{lemma}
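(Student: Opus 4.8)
The plan is to follow the template of Lemma~\ref{lem:regularity:Distrib-Boundary-Dirichlet}, replacing the odd reflection across the horizontal boundary by an \emph{even} reflection. After a periodic extension in $x_1$ and $x_2$ (keeping the notation $u$), it suffices to prove $H^1$-regularity up to each horizontal boundary separately; I treat the lower boundary, assuming without loss of generality $I_3 = (0,l_3)$. On $\check\Omega := I_1\times I_2\times(-l_3,l_3)$ I define $\check u$ as the even extension of $u$, $\check f$ as the even extension of $f$, and $\check g$ so that $g_1,g_2$ are extended even and $g_3$ is extended odd across $x_3 = 0$. The goal of the main computation is to show that $\check u$ is a distributional solution of $-\Delta\check u = \check f - \div\check g$ on $\check\Omega$ with $L^2$-data, so that the Weyl-type Lemma~\ref{lem:Weyl-H1} yields $\check u\in H^1_\loc(\check\Omega)$, hence $u\in H^1$ up to $\{x_3 = 0\}$.

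For the main computation I test with products $\varphi\psi$, where $\varphi\in D_\#(I_1\times I_2)$ and $\psi\in C_c^\infty((-l_3,l_3))$ is supported near $0$, and I introduce the even combination $\check\psi(x_3) := \psi(x_3) + \psi(-x_3)$, which satisfies $\partial_3\check\psi(0) = 0$. As in the Dirichlet case, folding the integral over $\check\Omega$ onto $\Omega$ produces $\int_{\check\Omega}\check u\,\Delta(\varphi\psi) = \int_\Omega u\,(\Delta_{1,2}\varphi\,\check\psi + \varphi\,\partial_3^2\check\psi)$, and the chosen parities of the extensions of $f$ and $g$ are exactly those for which the data side folds to $\int_\Omega f\,\varphi\check\psi + g\cdot\nabla(\varphi\check\psi)$. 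Writing $G_\varphi(x_3) := \int_{I_1\times I_2}u\,\varphi$, condition \eqref{eq:vanish:distributional:Neumann} provides $G_\varphi\in H^1(I_3)$ with weak derivative $h$, and I integrate the term $\int_{I_3}G_\varphi\,\partial_3^2\check\psi$ by parts.

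The crucial difference from the Dirichlet proof, and the point I expect to be the main obstacle, lies in this boundary term. In the Dirichlet case it vanished because $G_\varphi\in H^1_0(I_3)$; here \eqref{eq:vanish:distributional:Neumann} only tests against $\psi\in D(I_3)$, so $G_\varphi$ need not vanish at the endpoints. Instead the boundary term $[G_\varphi\,\partial_3\check\psi]$ is killed by the \emph{evenness} of $\check\psi$ at $x_3 = 0$ (so $\partial_3\check\psi(0) = 0$) together with the support of $\psi$ away from $l_3$. To reverse the integration by parts and recover an admissible test function for \eqref{eq:Helmholtz-distributional-Neumann}, I must approximate $\check\psi|_{I_3}$ in $H^1(I_3)$ by functions $\check\psi_n$ whose derivative $\partial_3\check\psi_n$ is compactly supported in $I_3$, so that $\varphi\check\psi_n$ lies in the admissible class $\{\phi\in D_\#(\overline{\Omega})\mid\partial_3\phi\in D_\#(\Omega)\}$. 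Such an approximation is produced exactly by the construction of Lemma~\ref{lem:density-vanishing-normal-derivative}: approximate $\partial_3\check\psi$ in $L^2(I_3)$ by $w_n\in C_c^\infty(I_3)$ and set $\check\psi_n(x_3) := \check\psi(0) + \int_0^{x_3}w_n$. Passing to the limit and invoking \eqref{eq:Helmholtz-distributional-Neumann} for $\varphi\check\psi_n$ then gives the desired distributional identity on $\check\Omega$ for product test functions, which extends to all of $D_\#(\check\Omega)$ by the $H^2$-density of such products.

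With $u\in H^1_\#(\Omega)$ established (after the analogous reflection at $x_3 = l_3$ and using periodicity laterally), it remains to derive the weak form \eqref{eq:IntParts-Neumann}. For $\phi$ with $\partial_3\phi\in D_\#(\Omega)$ I integrate \eqref{eq:Helmholtz-distributional-Neumann} by parts; the lateral boundary terms cancel by periodicity, while the terms on $\delh$ vanish because $\partial_3\phi$ is supported away from $\delh$. Thus $\int_\Omega\nabla u\cdot\nabla\phi = \int_\Omega f\phi + g\cdot\nabla\phi$ for all such $\phi$, and since these $\phi$ are dense in $H^1_\#(\Omega)$ by Lemma~\ref{lem:density-vanishing-normal-derivative}, the identity extends to all $\phi\in H^1_\#(\Omega)$. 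The Neumann condition is thereby encoded naturally, so no separate trace argument is needed.
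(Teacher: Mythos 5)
Your proposal is correct and follows essentially the same route as the paper's proof: even reflection across the horizontal boundary, folding the test integral onto $\Omega$ via $\check\psi(x_3)=\psi(x_3)+\psi(-x_3)$, killing the boundary term through $\partial_3\check\psi(0)=0$ rather than through vanishing of $G_\fhi$, approximating $\check\psi$ by functions with $\partial_3\check\psi_n\in C^\infty_c(I_3)$ to land in the admissible test class, and concluding with the Weyl-type Lemma~\ref{lem:Weyl-H1} and a density argument for \eqref{eq:IntParts-Neumann}. Your explicit choice of parities for the extension of $g$ (components $g_1,g_2$ even, $g_3$ odd) is in fact slightly more careful than the paper's blanket ``even extension.''
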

	
	Let us highlight the differences between Lemma \ref
	{lem:regularity:Distrib-Boundary-Dirichlet} and Lemma \ref
	{lem:regularity:Distrib-Boundary-Neumann}. (i) In relation \eqref
	{eq:Helmholtz-distributional-Neumann}, the test-functions do not have
	to vanish at the horizontal boundaries; on the other hand, we demand
	that the vertical derivative vanishes in a neighborhood of the
	boundary. (ii) Relation \eqref {eq:vanish:distributional:Neumann}
	implies that
	$\left(\int_{I_1 \times I_2} u\, \fhi\right)\in H^1(I_3)$, but there
	is no information on the boundary values.
	
	\begin{proof}
		As in the last proof, it suffices to show that a suitable extension
		of $u$ satisfies an equation on an extended domain; an application
		of the lemma of Weyl then yields the $H^1$-regularity up to the
		boundary. The proof is much like the proof of Lemma \ref
		{lem:regularity:Distrib-Boundary-Dirichlet}. We restrict ourselves
		to a sketch of the differences and to the arguments regarding the
		lower boundary.
		
		We assume again $I_3 = (0,l_3)$ and consider the extended domain
		$\check\Omega \coloneqq I_1\times I_2\times (-l_3,l_3)$. In this proof, we
		extend $u$ to an even function and set
		$\check{u}(\hat{x}, x_3)\in L^2(\Omega)$ by
		\begin{equation*}
			\check{u}(\hat{x}, x_3) \coloneqq \begin{cases} u(\hat{x}, x_3) &
				\text{if } x_3 >0\,,
				\\
				u(\hat{x}, -x_3) & \text{if } x_3 <0 \,,
			\end{cases}
		\end{equation*}
		and define the functions $\check{f}$ and $\check{g}$ in the same way
		as even extensions.
		
		Test-functions are chosen with $\varphi \in D_\#(I_1 \times I_2)$
		and $\psi \in C^\infty([-l_3,l_3])$. The symmetrized variant is defined as
		$\check{\psi}(x_3) = \psi(x_3) + \psi(-x_3)$. We note that
		$\check\psi$ is even and continuous across $x_3=0$, the value for
		$x_3=0$ is arbitrary, the derivative vanishes in $x_3=0$. Of course,
		$\del_{x_3} \psi$ is not vanishing in a neighborhood of $x_3=0$. We
		choose an approximation as follows:
		$\check\psi_n \in C^\infty(\overline{I_3})$ with
		$\partial_{x_3} \check\psi_n \in C^\infty_c(I_3)$ such that
		$\check\psi_n \to \check{\psi}|_{I_3}$ in $H^1(I_3)$.
		
		At this point, one performs the same lengthy computation as in the
		proof of Lemma \ref {lem:regularity:Distrib-Boundary-Dirichlet}.
		The first three equalities are identical. The fourth equality
		remains true since $\del_{x_3} \psi$ vanishes for $x_3=0$ (and a
		reference to \eqref{eq:vanish:distributional:Dirichlet} is not
		necessary).
		For the equality that was using \eqref
		{eq:Helmholtz-distributional-Dirichlet}, we now use \eqref
		{eq:Helmholtz-distributional-Neumann}. The result of the
		calculation is
		\begin{equation}
			\label{eq:972345}
			-\int\limits_{\check \Omega} \check{u}\, \Delta \phi = 
			\int\limits_{\check \Omega} \check{f}\, \phi + \check{g} \cdot \nabla \phi
		\end{equation}
		for test-functions of the form $\phi = \fhi(x_1,x_2)\psi(x_3)$. An
		arbitrary function $\phi \in D_\#(\check\Omega)$ can be approximated
		in $H^2(\check\Omega)$ with linear combinations of such functions.
		We obtain that \eqref {eq:972345} holds for all
		$\phi \in D_\#(\check\Omega)$. This concludes the argument for the
		lower boundary $x_3 = 0$, the upper boundary is treated analogously.
		Lemma \ref{lem:Weyl-H1} of Weyl allows to conclude
		$u \in H^1_\#(\Omega)$.
		
		The weak form \eqref {eq:IntParts-Neumann} is obtained from \eqref
		{eq:972345} as follows: An arbitrary $\phi \in H^1_{\#}(\Omega)$ is
		approximated with $\phi_n\in C^\infty_{\#}(\overline\Omega)$ with
		$\del_{x_3} \phi_n = 0$ along horizontal boundaries (approximation
		in $H^1(\Omega)$ as $n\to \infty$ as in Lemma
		\ref{lem:density}). Relation \eqref {eq:972345} is used with
		$\phi = \phi_n$. An integration by parts yields the equality of
		\eqref {eq:IntParts-Neumann} for $\phi_n$. In the limit
		$n\to \infty$, we obtain \eqref {eq:IntParts-Neumann}.
	\end{proof}

\def\cprime{$'$}

\end{document}